
\documentclass[preprint,12pt]{elsarticle}

\usepackage{amsmath, amsthm, euscript,  enumerate}
\usepackage{epsfig}




\usepackage{amssymb}


\usepackage{epsfig}
\usepackage{color}

\newcommand{\R}{{\mathbb R}}
\newcommand{\Z}{{\mathbb Z}}

\newcommand{\e}{\epsilon}
\newcommand{\vp}{\varphi}
\newcommand{\osc}{\operatornamewithlimits{osc}}

\newcommand{\supp}{\operatorname{supp}}
\newcommand{\D}{\nabla}
\newcommand{\ra}{\rightarrow}
\newcommand{\La}{\triangle}
\newcommand{\bs}{\backslash}
\newcommand{\wl}{\overset{w}{\longrightarrow}}

\newtheorem{theo}{Theorem}[section]
\newtheorem{lemm}[theo]{Lemma}

\newtheorem{remark}[theo]{Remark}
\newtheorem{corollary}[theo]{\textbf{Corollary}}
\numberwithin{equation}{section}

\begin{document}

\begin{frontmatter}



\title{Viscosity method for Homogenization of Parabolic Nonlinear Equations in Perforated
Domains}


\author{Sunghoon Kim \corref{cor}}

\address{School of Mathematical Sciences,
Seoul National University, San56-1 Shinrim-dong Kwanak-gu Seoul
151-747, South Korea} \ead{ gauss79@snu.ac.kr }
\cortext[cor]{Corresponding author.}

\author{Ki-Ahm Lee}
\address{School of Mathematical Sciences,
Seoul National University, San56-1 Shinrim-dong Kwanak-gu Seoul
151-747, South Korea } \ead{ kiahm@math.snu.ac.kr }


\begin{abstract}
In this paper, we develop a viscosity method for  Homogenization of
Nonlinear Parabolic Equations constrained by highly oscillating
obstacles or Dirichlet data in perforated domains. The Dirichlet
data on the perforated domain can be considered as a constraint or
an obstacle. Homogenization of nonlinear eigen value problems has
been also considered to control the degeneracy of the Porous medium
equation in perforated domains. For the simplicity, we consider
obstacles that consist of cylindrical columns distributed
periodically and perforated domains with punctured balls. If the
decay rate of the capacity of columns or the capacity of punctured
ball is too high or too small, the limit of $u_{\e}$ will converge
to  trivial solutions. The critical decay rates of having nontrivial
solution are obtained with the construction of barriers. We also
show the limit of $u_{\e}$ satisfies a homogenized equation with a
term showing the effect of the highly oscillating obstacles or
perforated domain in viscosity sense.
\end{abstract}

\begin{keyword}
Homogenization, Perforated Domain, Corrector, Fully Nonlinear
Parabolic Equations, Porous medium equation\\
1991 \emph{Mathematics Subject Classification.} Primary 35K55, 35K65


\end{keyword}

\end{frontmatter}

\section{introduction}\label{sec-intro}
This paper  concerns on  the homogenization of nonlinear parabolic
equations in perforated domains. Many physical models arising in the
media with a periodic structure will have solutions with
oscillations in the {\it micro scale}. The periodicity of the
oscillation denoted by $\e$ is much smaller compared to the size of
the sample in the media having {\it macro scale}. The presence of
slow and fast varying variables in the solution is the main obstacle
on the way of numerical investigation in periodic media. It is
reasonable to find asymptotic analysis of solutions as $\e$ goes to
zero and to study the macroscopic or averaged description. In the
mathematical point of view, the partial differential equation
denoted by $L_{\e}$ may have oscillation coefficients and even the
domains, $\Omega_{\e}$, will have periodic structure like a
perforated domain. So for each of $\e>0$,we have solutions $u_{\e}$
satisfying
  $$L_{\e}u_{\e}=0 \quad \textrm{in $\Omega_{\e}$},$$
with an appropriate boundary condition. It is an important step to
find the sense of convergence of $u_{\e}$ to a limit $u$ and the
equation called {\it Homogenized Equation}
$$\overline{L} u=0\quad\textrm{in $\Omega$}$$
satisfied by $u$. Such process is called {\it Homogenization}.

Large number of literatures on this topic can be found in
\cite{BLP},\cite{JKO}. And various notion of convergences have been
introduced, for example $\Gamma$-convergence of DeGiorgi, \cite{DG},
$G$-convergence of Spagnolo ,\cite{S}, and  $H$-convergence of
Tartar, \cite{T1}.  Two-scale asymptotic expansion method has been
used to find $\overline{L}$ formally and justified by the energy
method of Tartar.  He was able to pass the limit through {\it
compensated compactness} due to a particular choice of oscillating
test function \cite{T}. For the periodic structure, two-scale
convergence was introduced by Nguetseng, \cite{N} and Allaire,
\cite{A}, which provides the convergence of $u_{\e}(x)$ to a
two-scale limit $u_0(x,y)$ in self-contained fashion. And recently
viscosity method for homogenization has been developed by Evans,
\cite{Ev1} and Caffarelli, \cite{Ca}. Nonvariational problems in
Homogenization has been considered in \cite{CL1},\cite{CL2}. They
observe that the  homogenization of some parabolic flows could be
very different from the homogenization process by Energy method. For
example, there could be multiple solutions in reaction diffusion
equations. It is noticeable that the parabolic flows with initial
data  larger than largest viscosity elliptic solution will never
cross the stationary viscosity solution and that the homogenization
will happen away from a stationary solution achieved by minimizing
the corresponding  energy, \cite{CLM}\cite{CLM1}.  And the viscosity
method has been applied to the homogenization of nonlinear partial
differential equations with random data \cite{CSW},\cite{CLM2}.

Now let us introduce an example of parabolic equations in perforated
domains. Set $\Omega$ be a bounded connected subset of
$\mathbb{R}^n$ with smooth boundary. We are going to obtain a
perforated domain. For each $\e>0$, we cover $\mathbb{R}^n$ by cubes
$\cup_{m\in \epsilon \mathbb{Z}^n}Q^{\epsilon}_m$ where a cube
$Q^{\epsilon}_m$ is centered at $m$ and is of the size $\epsilon$.
Then from each cube, $Q^{\e}_m$, we remove a ball, $T_{a_{\e}}(m)$,
of radius $a_{\e}$ having the same center of the cube $Q^{\e}_m$.
Then we can produce a domain that is perforated by spherical
identical holes. Let
\begin{equation*}
\begin{aligned}
T_{a_{\epsilon}}&:=\cup_{m\in\epsilon\Z^{n}}T_{a_{\epsilon}}(m), \\
\mathbb{R}^n_{a_{\e}}&:=\mathbb{R}^n \bs T_{a_{\epsilon}}
\end{aligned}
\end{equation*}
and
\begin{equation*}
\begin{aligned}
\Omega_{a_{\e}}&:=\Omega\cap \mathbb{R}^n_{a_{\e}}=\Omega \bs T_{a_{\e}},\\
Q_{T,a_{\e}}&:=\Omega_{a_{\e}}\times (0,T].
\end{aligned}
\end{equation*}
Now  we are going to construct the highly oscillating obstacles. Let
us consider a  smooth function $\varphi(x,t)$ in
$Q_T=\Omega\times(0,T]$ which is negative on the lateral boundary
$\partial_lQ_T$, i.e. $\varphi \leq 0$ on $\partial_lQ_T$ and
positive in some region of $Q_T$. Highly oscillating obstacle
$\vp_{\e}(x,t)$ is zero in $\Omega_{a_{\e}}$ and $\vp(x,t)$ on each
hole $T^{a_{\e}}_m$:
\begin{equation*}
\begin{aligned}
\varphi_{\e}&:=\varphi \chi_{T_{a_{\epsilon}}}\\
&=\begin{cases} \varphi(x,t) \qquad \qquad \textrm{if} \quad
(x,t) \in T_{a_{\epsilon}}\times(0,T]\\
\quad 0 \qquad \qquad \qquad \qquad \textrm{Otherwise.}
\end{cases}
\end{aligned}
\end{equation*}
Then $\vp_{\e}(x,t)$ will oscillate more rapidly  between $0$ and
$\vp(x,t)$ as $\e$ goes to zero.

We can consider the standard obstacle problem asking the least
viscosity super-solution of Heat operator above the given
oscillating obstacle: find the smallest viscosity super-solution
$u_{\epsilon}(x,t)$ such that
\begin{equation}
\begin{cases}
\begin{aligned}
H[u]=\La u_{\epsilon}&-u_t \leq 0 \qquad \qquad \mbox{in $Q_T$} \quad (=\Omega \times (0,T])\\
u_{\epsilon}(x,t) &=0\qquad \qquad \qquad  \mbox{on $\partial_l Q_T$} \quad (=\partial \Omega \times (0,T])\\
u_{\epsilon}(x,t)&\geq \vp_{\epsilon}(x,t)\qquad \qquad \, \mbox{in $Q_T$}\\
u_{\epsilon}(x,0)&=g(x) \qquad \qquad \quad \mbox{on $\Omega \times
\{0\}$}
\end{aligned}
\end{cases}
\tag{{\bf $H_{\e}$}} \label{eq-main}
\end{equation}
where $g(x)\geq \vp(x,0)$, $\vp_{\e}(x,t) \leq 0$ on $\partial_lQ_T$
and $\vp_{\e}$ is positive in some region of  $Q_T$. The concept of
viscosity solution and its regularity can be found at \cite{CC}.\\
\indent We are interested in the limit of the $u_{\e}$ as $\e$ goes
to zero. Then there are three possible cases. First, if the decay
rate $a_{\e}$ of the radius of column is too high  w.r.t. $\e$, the
limit solution will not notice  the existence of the obstacle. Hence
it will satisfy the Heat equation without any obstacle. Second, on
the contrary, if the decay rate $a_{\e}$ is too slow, the limit
solution will be influenced fully by  the existence of the obstacle
and then become a solution of the obstacle problem with the obstacle
$\vp(x)$. We are interested in the third case when the decay rate
$a_{\e}$ is critical  so that the limit solution will have partial
influence from  the obstacle. Then we are able to show that there is
a limiting configuration that becomes a solution for an operator
which has the original operator, i.e. Heat operator, and an
additional term that comes from the influence of the oscillating
obstacles. Naturally we ask what is the critical rate  $ a_{\e}^*$
of the size of the obstacle so that there is non-trivial limit
$u(x)$ of $u_{\e}(x)$ in the last case and what is the homogenized
equation satisfied by the limit function $u$.

 The elliptic variational inequalities with highly
oscillating obstacles were first studied by Carbone and Colombini,
\cite{CC}, and developed by De Giorgi, Dal Maso and Longo,
\cite{DDL}, Dal Maso and Longo, \cite{DL}, Dal Maso,
\cite{D1}\cite{D2}, H. Attouch and C. Picard, \cite{AP}, in more
general context. The energy method was considered by Cioranescu and
Murat, \cite{CM}.  The other useful references can be found in
\cite{CM}. The method of scale-convergence was adopted by J.
Casado-D\'{i}az  for nonlinear equation of $p$-Laplacian type in
perforated domain. and  the  parabolic version was studied by A.K.
Nandakumaran and M. Rajesh \cite{NR}. They considered the degeneracy
that is closed to parabolic $p$-Laplacian type and that doesn't
include the Porous Medium Equation type.  L. Baffico, C. Conca, and
M. Rajesh considered homogenization of eigen value problems in
perforated domain  for the nonlinear equation of $p$-Laplacian type,
\cite{BCR}.

  The obstacle problems for linear or nonlinear equation of the divergence type has been studied by many authors and the reference can be founded in \cite{Fr}.
The viscosity method for the obstacle problem of nonlinear  equation
of non-divergence type was studied by the author
\cite{L1},\cite{L2}.

 Caffarelli and Lee \cite{CL} develop a viscosity method for
the obstacle problem for Harmonic operator with highly oscillating
obstacles. This viscosity method is also improved into a fully
nonlinear uniformly elliptic operator homogeneous of degree one.

 The homogenization of highly oscillating obstacles for the Heat equation has been extended to the fully nonlinear equations of non-divergence type. This part is a parabolic version of the results in \cite{CL}. The same correctors constructed in \cite{CL} play an important role in the parabolic equation.
On the other hand, when we consider the following Porous Medium
Equation in perforated domain,  the viscosity method considered in
\cite{CL} can not be applied directly. The equation will be
formulated in the following form: find the viscosity solution
$u_{\e}(x,t)$ s.t.
\begin{equation}
\begin{cases}
\La u_{\epsilon}^m -\partial_tu_{\epsilon}=0 \qquad \qquad \mbox{in $Q_{T,a^{\ast}_{\epsilon}}\,\,(=\Omega_{a^{\ast}_{\epsilon}}\times(0,T])$}\\
u_{\epsilon} =0 \qquad \qquad \qquad \qquad \mbox{on $\partial_lQ_{T,a^{\ast}_{\epsilon}}\,\,(=\partial \Omega_{a^{\ast}_{\epsilon}}\times(0,T])$}\\
u_{\e}=g_{\epsilon} \qquad \qquad \qquad \qquad \mbox{on
$\Omega_{a^{\ast}_{\epsilon}}\times\{0\}$}
\end{cases}
\tag{{\bf $PME_{\e}^1$}} \label{eq-main1-1}
\end{equation}
with $1<m<\infty$ and a compatible $g_{\e}(x)$ which will be defined
at Section \ref{sec-por}. The Dirichlet boundary condition can be
considered an obstacle problem where the obstacle imposes the value
of the solution is zero in the periodic holes. And the diffusion
coefficient of ({\bf $PME_{\e}^1$}) is $mu^{m-1}$  and  will be zero
on $\partial \Omega_{a^{\ast}_{\epsilon}}$, which makes important
ingredients of the viscosity method for uniformly elliptic and
parabolic equations inapplicable without serious modification. Such
ingredients will be correctors, Harnack inequality, discrete
gradient estimate, and the concept of convergence. Therefore the
control of  the degeneracy of ({\bf $PME_{\e}^1$}) is a crucial part
of this paper.

One of the important observation is that
$U_{\epsilon}(x,t)=\frac{\varphi^{\frac{1}{m}}_{\epsilon}(x)}{(1+t)^{\frac{1}{m-1}}}$
will be a self-similar solution of ({\bf $PME_{\e}^1$}) if
$\varphi_{\epsilon}(x)$ satisfies the nonlinear eigen value problem:
\begin{equation*}
\begin{cases}
\begin{aligned}
\La \varphi_{\epsilon}&+\varphi_{\epsilon}^{\frac{1}{m}}=0 \qquad \qquad \qquad \mbox{in $\Omega_{a^{\ast}_{\epsilon}}$}\\
\varphi_{\epsilon}&=0 \qquad \qquad \qquad \qquad \mbox{on
$\partial\Omega_{a^{\ast}_{\epsilon}}$}.
\end{aligned}
\end{cases}
\end{equation*}
The equation  for $\varphi_{\epsilon}$ is uniformly elliptic with
nonlinear reaction term. The viscosity method in \cite{CL}, can be
applied to the homogenization of $\varphi_{\epsilon}$ with some
modification because of the nonlinearity of the reaction term. It is
crucial to capture the geometric shape of $\varphi_{\epsilon}$
saying that $\varphi_{\epsilon}$ is almost Lipschitz function with
spikes similar to the fundamental solution of the Laplace equation
in a very small neighborhood of the holes. It is not clear whether
we can find the geometric shape of $\varphi_{\epsilon}$ if we
construct $\varphi_{\epsilon}$ by the energy method since $H^1$-weak
solutions may have poor shapes. And then such  self-similar
solution, $U_{\epsilon}(x,t)$ will be used to construct super- and
sub-solution of ({\bf $PME_{\e}^1$})  in order to control the
solution, $u_{\epsilon}(x,t)$, especially the decay rate of
$u_{\epsilon}(x,t)$ as $x$ approaches to the holes, $T_{a_{\e}}$ at
Section \ref{sec-por}. With the help of such control, we are able to
prove the discrete gradient estimate of the $u_{\e}$ in order to
compare the values of $u_{\e}$ on a discrete lattice created
periodically by a point in a cell. And we also able to show the
almost flatness saying that the values of $u_{\e}$ at any two points
in each small cell are close to each other with an $\e$-error if
those points are away from the very small neighborhood of the hole
in the cell.

It is noticeable that the homogenized equation is expressed as a sum
between the original equation and a term depending on the capacity
and $(\varphi-u)_+$ as the case in  the heat equation, Theorem
\ref{thm-i}. We also prove that such decoupling of terms will happen
in the homogenization of porous medium equations in perforated
domain, Theorem \ref{thm-eigen}. But it is not clear whether such
decoupling property holds in the general fully nonlinear equations
of non-divergence type, Theorem \ref{thm-ii}.

\indent This paper is divided into three part: In Section
\ref{sec-obstacle}, we review some fact studied in \cite{CL}(highly
oscillating obstacle problem for Harmonic operator) and extend the
results of \cite{CL} to the Heat operator and fully nonlinear
parabolic operator.
 In Section \ref{sec-eigen}, we study the elliptic
eigenvalue problem in perforated domains, which describe the
behavior of solution of porous medium equations at a point close to
the boundary. And, in Section \ref{sec-por}, we deal with the estimates for the
porous medium equation in fixed perforated domain.
\\
\textbf{Notations:} Before we explain the main ideas of the paper,
let us summarize the notations and definitions that we will be used.
\begin{itemize}
\item $Q_T=\Omega\times(0,T]$, \quad
$\partial_lQ_T=\partial\Omega\times(0,T]$\\
\item $T_{a_{\epsilon}}$, $\mathbb{R}^n_{a_{\epsilon}}$, $\Omega_{a_{\epsilon}}$ and
$Q_{T,a_{\epsilon}}$ are described in Section \ref{sec-intro}.\\
\item We denote by $Q^{\e}_m$ the cube $\{x=(x_1,\cdots,x_n)\in \mathbb{R}^n:|x_i-m_i|\leq
\frac{\epsilon}{2}\,\, (i=1,\cdots,n)\}$ where
$m=(m_1,\cdots,m_n)\in \mathbb{R}^n$.\\
\item Denoting by $w_{\epsilon}$ the corrector described in
Section 2 in \cite{CL}.\\
\item Numbers: $a^{\ast}_{\epsilon}=\epsilon^{\alpha_{\ast}}$, $\alpha_{\ast}=\frac{n}{n-2}$ for
$n \geq 3$ and $a^{\ast}_{\epsilon}=e^{-\frac{1}{\epsilon^2}}$ for
$n=2$.
\end{itemize}
\section{Highly Oscillating Obstacle Problems}\label{sec-obstacle}
\indent First, we review results on the correctors in \cite{CL}.
Likewise Laplace operator in \cite{CL}, the correctors will be used
to correct a limit $u(x,t)$ of a solution $u_{\epsilon}(x,t)$ for the obstacle problems (\ref{eq-main}) in this section.\\
\indent Any possible limit, $u(x,t)$, can be corrected to be a solution of each $\epsilon$-problem,
(\ref{eq-main}), and it is also expected to satisfy a homogenized equation. The homogenized equation comes from a condition under which $u$ can be corrected to $u_{\epsilon}$. To have a oscillating corrector, let us consider a family
of functions, $w_{\epsilon}(x)$, which
satisfy
\begin{equation}\label{eq-cases-definition-of-corrector-cite-by-CL-5}
\begin{cases}
\La w_{\epsilon}=k \qquad \qquad \mbox{in $\mathbb{R}_{a_{\e}}^n=\R^n\bs T_{a_{\e}}$}\\
w_{\epsilon}(x)=1 \qquad \qquad  \mbox{in $T_{a_{\epsilon}}$}
\end{cases}
\end{equation}
for some $k>0$. In \cite{CL}, Caffarelli and one of authors
construct the super- or sub-solutions through which they  find the
limit of $w_{\epsilon}$
depending on the decay rate of the size of the oscillating obstacles, $a_{\epsilon}$.\\
\indent The next Lemma tells us that there is a critical rate
$a^{\ast}_{\epsilon}$ so that we get a nontrivial limit of
correctors, $w_{\epsilon}$. The reader can easily check following
the details in the proof of the Lemma 2.1 in \cite{CL}.
\begin{lemm}\label{lem-5}
Let $a_{\epsilon}=c_0\epsilon^{\alpha}$. There is a unique number
$\alpha_{\ast}=\frac{n}{n-2}$ s.t.
\begin{equation*}
\begin{cases}
\lim\inf w_{\epsilon}=-\infty, \qquad \qquad \mbox{for any $k>0$ if
$\alpha>\alpha_{\ast}$}\\
\lim\inf w_{\epsilon}=0 \quad \qquad \qquad \mbox{for $\alpha=\alpha_{\ast}$ and $k=cap(B_1)$}\\
\lim\inf w_{\epsilon}=1 \quad \qquad \qquad \mbox{for any $k>0$ if
$\alpha<\alpha_{\ast}$}.
\end{cases}
\end{equation*}
\end{lemm}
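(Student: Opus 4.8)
The plan is to reduce everything to a one-cell estimate and to build explicit radial super- and sub-solutions that separate the three regimes. By the $\e\Z^n$-periodicity of $T_{a_\e}$ it suffices to analyze $w_\e$ on a single cube $Q^\e_m$, and, since $w_\e\equiv 1$ on the holes, the quantity $\liminf w_\e$ is controlled by $\min_{Q^\e_m}w_\e$. Because a cube is not a ball, I would sandwich $Q^\e_m$ between the inscribed ball $B_{\e/2}(m)$ and the circumscribed ball $B_{\sqrt n\,\e/2}(m)$ and carry out the computation on the spherical shells $a_\e\le|x-m|\le\rho\e$ with $\rho\in\{1/2,\sqrt n/2\}$, transferring the radial bounds to $w_\e$ by the maximum principle. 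On such a shell (for $n\ge 3$) the natural comparison function is the radial solution of $\La v=k$ with $v=1$ on $\{|x-m|=a_\e\}$ and vanishing outer normal derivative (the Neumann datum forced by periodicity), namely
\[
v(r)=A_\e+B_\e\,r^{2-n}+\tfrac{k}{2n}\,r^2 ,
\]
where $B_\e$ is fixed by the Neumann condition and $A_\e$ by $v(a_\e)=1$.

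A direct computation gives $B_\e\sim c_n\,k\,\e^{\,n}$, hence
\[
A_\e=1-B_\e\,a_\e^{2-n}+o(1)=1-c_n'\,k\,c_0^{2-n}\,\e^{\,n-\alpha(n-2)}+o(1),
\]
and, since $v$ is monotone on the shell, $\min v$ is comparable to $A_\e$; this is precisely the quantity that governs $\liminf w_\e$. The trichotomy now follows from the sign of the exponent $n-\alpha(n-2)$: it is strictly negative when $\alpha>\alpha_\ast=\frac{n}{n-2}$, so $A_\e\to-\infty$ for every $k>0$; it is strictly positive when $\alpha<\alpha_\ast$, so $A_\e\to 1$ for every $k>0$; and it vanishes exactly when $\alpha=\alpha_\ast$, in which case $A_\e\to 1-c_n'\,k\,c_0^{2-n}$, and this limit equals $0$ precisely for the critical weight $k=\mathrm{cap}(B_1)$, with the normalization of $c_0$ and of the cell used in \cite{CL}. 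The case $n=2$ is the same argument with $r^{2-n}$ replaced by $\log\frac1r$: the analogue of $A_\e$ is $1-c\,k/|\log a_\e|+o(1)$, and the choice $a_\e=e^{-1/\e^2}$ makes $k/|\log a_\e|$ of order $k\e^{2}$, yielding $-\infty$, $1$, or a nontrivial limit according as $|\log a_\e|$ is small, large, or of order $\e^{-2}$.

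The step that needs care is upgrading these radial profiles to genuine barriers for $w_\e$ on all of $\R^n_{a_\e}$ and controlling the cube-versus-ball discrepancy. The inhomogeneity $k$ is harmless: it only shifts the radial profile by the explicit particular solution $\tfrac{k}{2n}r^2$ (resp. its logarithmic analogue in dimension two) and leaves untouched the leading balance between the constant term and the fundamental-solution term; consequently the corrector construction in Section 2 of \cite{CL} carries over essentially verbatim, and I would adapt it rather than redo it. The genuine obstacle is verifying that the Neumann data produced by periodicity, together with the gap between $B_{\e/2}(m)$ and $B_{\sqrt n\,\e/2}(m)$, perturb $A_\e$ only at the $o(1)$ level in the sub- and super-critical cases, and do not destroy the divergence $A_\e\to-\infty$ in the super-critical case; this is exactly the two-sided sandwiching estimate in the proof of Lemma 2.1 of \cite{CL}, so after recording the annulus computation above I would invoke that argument to finish.
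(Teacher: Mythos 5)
Your proposal is correct and follows essentially the same route as the paper, which itself defers to the super-/sub-solution construction of Lemma 2.1 in \cite{CL}: explicit radial barriers $A_\e+B_\e r^{2-n}+\tfrac{k}{2n}r^2$ on cell annuli, with the Neumann condition fixing $B_\e\sim k\e^n$ and the sign of the exponent $n-\alpha(n-2)$ producing the trichotomy at $\alpha_\ast=\tfrac{n}{n-2}$. The cube-versus-ball sandwiching and the periodicity issue you flag are handled exactly as in \cite{CL}, so nothing further is needed.
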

In addition, we can also obtain the interesting property from
\cite{JKO}.
\begin{lemm}
Set $\alpha=\alpha_{\ast}$, then the function
$\widehat{w}_{\epsilon}$ satisfying
\begin{equation*}
\widehat{w}_{\epsilon}=1-w_{\epsilon}
\end{equation*}
converges weakly to 1 in $H^1_{loc}(\mathbb{R}^n)$.
\end{lemm}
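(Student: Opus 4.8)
The plan is to obtain the weak $H^1_{loc}$--convergence from two ingredients: (i) $\{\widehat w_\e\}$ is bounded in $H^1(K)$ for every compact $K\subset\R^n$; and (ii) $\widehat w_\e\to1$ strongly in $L^2_{loc}(\R^n)$. Once both hold, every subsequence of $\widehat w_\e$ has a further subsequence converging weakly in $H^1(K)$ to some $v$, which by the Rellich theorem coincides with the $L^2(K)$--limit, namely $v\equiv1$; as the limit does not depend on the subsequence, the whole family satisfies $\widehat w_\e\wl1$ in $H^1(K)$. In particular $\D\widehat w_\e\wl0$ in $L^2(K)$ even though $\|\D\widehat w_\e\|_{L^2(K)}$ stays bounded below --- this is the Cioranescu--Murat ``strange term'' effect, and it is the only genuinely non-soft feature of the statement.

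For (i) I would use a Caccioppoli estimate. From \eqref{eq-cases-definition-of-corrector-cite-by-CL-5}, $\widehat w_\e=1-w_\e$ solves $\La\widehat w_\e=-k$ in $\R^n_{a_\e}$ and $\widehat w_\e=0$ on $T_{a_\e}$, while the barrier constructions recalled from \cite{CL} supply the uniform bound $0\le\widehat w_\e\le1$. Fixing $\zeta\in C_c^\infty(\R^n)$ with $\zeta\equiv1$ on $K$, multiplying the equation by $\widehat w_\e\,\zeta^2$, integrating by parts over $\R^n_{a_\e}$ --- the boundary integral over $\partial T_{a_\e}$ vanishes because $\widehat w_\e=0$ there, and $\zeta$ removes any contribution from infinity --- and applying Young's inequality, one arrives at
\begin{equation*}
\int|\D\widehat w_\e|^2\zeta^2\;\le\;4\int\widehat w_\e^2\,|\D\zeta|^2+2k\int\widehat w_\e\,\zeta^2\;\le\;C\bigl(\|\D\zeta\|_\infty^2+k\bigr)\,|\supp\zeta|,
\end{equation*}
a bound independent of $\e$ (recall $k=\mathrm{cap}(B_1)$ is a fixed constant by Lemma~\ref{lem-5}). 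Together with $0\le\widehat w_\e\le1$ this yields (i).

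For (ii) I would estimate cell by cell. Write $\|1-\widehat w_\e\|_{L^2(K)}^2=\|w_\e\|_{L^2(K)}^2=\sum_m\int_{Q^\e_m\cap K}w_\e^2$, the sum running over the $O(\e^{-n})$ cubes meeting $K$. The corrector estimate of \cite{CL} gives, on each cube, $|w_\e(x)|\le C\min\{1,(a_\e/|x-m|)^{n-2}\}$ when $n\ge3$ (with the logarithmic analogue for $n=2$); since $0\le w_\e\le1$ we may bound $w_\e^2\le w_\e$ and integrate:
\begin{equation*}
\int_{Q^\e_m}w_\e^2\;\le\;C a_\e^{\,n}+C a_\e^{\,n-2}\!\int_{a_\e}^{C\e}\!\rho\,d\rho\;\le\;C\,a_\e^{\,n-2}\,\e^2 .
\end{equation*}
Summing over the cells and inserting $a_\e=c_0\e^{\alpha_\ast}$, for which $\alpha_\ast(n-2)=n$ by Lemma~\ref{lem-5}, gives $\|1-\widehat w_\e\|_{L^2(K)}^2\le C\e^{-n}\,a_\e^{\,n-2}\e^2=C c_0^{\,n-2}\,\e^2\to0$; the same bookkeeping with $a_\e=e^{-1/\e^2}$ and the logarithmic capacitary potential disposes of $n=2$. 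Hence $\widehat w_\e\to1$ in $L^2_{loc}(\R^n)$, and combined with (i) the lemma follows. I expect the main obstacle to be not the assembly above but the justification of the two facts imported from \cite{CL} --- the uniform sup-bound $0\le\widehat w_\e\le1$ and the capacitary decay $|w_\e(x)|\lesssim(a_\e/|x-m|)^{n-2}$ of the corrector near each hole --- which are precisely where the critical scaling $\alpha_\ast=\tfrac{n}{n-2}$ and the value $k=\mathrm{cap}(B_1)$ enter (Lemma~\ref{lem-5}).
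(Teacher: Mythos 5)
Your argument is correct, but it is worth noting that the paper does not actually prove this lemma at all --- it simply cites \cite{JKO} ("we can also obtain the interesting property from [JKO]"), so there is no internal proof to compare against. What you have written is essentially the standard Cioranescu--Murat/Jikov--Kozlov--Oleinik argument for convergence of correctors at the critical scale: a Caccioppoli bound giving weak $H^1_{loc}$ compactness, a cell-by-cell capacitary estimate giving strong $L^2_{loc}$ convergence to $1$, and Rellich to identify the weak limit. The assembly is sound, including the observation that the boundary integral over $\partial T_{a_\e}$ vanishes and that the critical identity $\alpha_*(n-2)=n$ is exactly what makes the sum $\e^{-n}a_\e^{n-2}\e^2$ tend to zero. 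Two small points on the imported facts you flag. First, the upper bound $w_\e\le 1$ follows directly from $\La w_\e=k>0$ plus periodicity (a subharmonic periodic function attains its maximum on $\partial T_{a_\e}$, where it equals $1$); the lower bound is not literally $w_\e\ge 0$ but rather $w_\e\ge -o(1)$, which is what Lemma \ref{lem-5} ($\liminf w_\e=0$ at the critical rate) and the sub-solution barriers of \cite{CL} actually deliver --- this is harmless for both your Caccioppoli estimate and the $L^2$ bookkeeping. Second, the pointwise decay $w_\e(x)\lesssim (a_\e/|x-m|)^{n-2}$ holds only up to an additive $O(k\e^2)$ correction coming from the right-hand side $k$ integrated over a cell; this contributes $O(\e^4)$ per unit volume to $\|w_\e\|_{L^2(K)}^2$ and so does not disturb the conclusion, but it should be stated. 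With those two caveats your proof stands on its own and is, if anything, more informative than the paper's bare citation.
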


\subsection{Heat Operator}\label{sec-2-1}

\indent We are interested in the limit $u$ of the viscosity solution
$u_{\e}$ of ({\bf $H_{\e}$}) as $\e$ goes to zero and the
homogenized equation satisfied by the limit $u$. As we discussed in
the introduction, there will three possible cases depending on the
decay rate of $a_{\e}$.

\begin{theo}\label{theo-main-heat-operator-71930}
\label{thm-i}
\item Let $u_{\e}(x,t)$ be  the least viscosity super solution of \ref{eq-main}.
  \begin{enumerate}
   \item There is a continuous function $u$ such that $u_{\e}\wl u$ in $Q_T$ with respect to $L^{p}$-norm, for $p>0$.
       And for any $\delta>0$, there is a subset $D_{\delta}\subset Q_T$ and $\e_o$ such that , for $0<\e<\e_o$, $u_{\e}\ra u$ uniformly in $D_{\delta}$ as $\e\ra 0$ and $|Q_T\bs D_{\delta}|<\delta$.
   \item Let $a^*_{\e}=\e^{\alpha_*}$ for $\alpha_*=\frac{n}{n-2}$ for $n\geq 3$ and $a^*_{\e}=e^{-\frac{1}{\e^2}}$ for $n=2$.
     \begin{enumerate}
     \item  For $c_oa^*_{\e}\leq a_{\e}\leq C_oa^*_{\e}$, $u$ is             a viscosity solution of
            \begin{equation*}
            \begin{split}
            H[u]+&k_{B_{r_o}}(\vp-u)_+=0\quad\textrm{in $Q_T$}\\
                             &u=0\qquad \qquad \qquad \textrm{on $\partial_l Q_T$.}\\
                             u(x,&0)=g(x) \qquad \qquad \mbox{on $\Omega \times \{0\}$}
            \end{split}
            \label{eq-}
            \end{equation*}
            where $k_{B_{r_o}}$ is the harmonic capacity of $B_{r_o}$ if $r_o=\lim_{\e\ra 0} \frac{a_{\e}}{a^*_{\e}}$exists.
     \item If $a_{\e}=o(a^*_{\e})$ then $u$ is a viscosity solution of
           \begin{equation*}
            \begin{split}
                            H[u]&=0\qquad \qquad \quad \textrm{in $Q_T$}\\
                             u&=0\qquad \qquad \quad \textrm{on $\partial_lQ_T$.}\\
                             u(x,0)&=g(x) \qquad \qquad \mbox{on $\Omega \times \{0\}$}
            \end{split}
            \label{eq-la-thin}
            \end{equation*}
    \item If $a^*_{\e}=o(a_{\e})$ then $u$ is a least viscosity
          super solution of
          \begin{equation*}
            \begin{split}
                            H[u]&\leq 0\qquad \qquad \quad\textrm{in $Q_T$}\\
                            u &\geq \vp\qquad \qquad \quad\textrm{in $Q_T$}\\
                             u&=0\qquad \qquad \quad\textrm{on $\partial_lQ_T$.}\\
                             u(x,0)&=g(x) \qquad \qquad \mbox{on $\Omega \times \{0\}$}
            \end{split}
            \label{eq-la-thick}
            \end{equation*}
     \end{enumerate}
\end{enumerate}
\end{theo}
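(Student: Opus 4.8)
The plan is to carry over to the heat operator $H[u]=\La u-u_t$ the corrector/barrier machinery that \cite{CL} developed for Laplace's equation, with the parabolic Harnack inequality and interior parabolic ($C^{2,1}$ and $C^{1,1}$) estimates taking the place of their elliptic counterparts; the three assertions (2a)--(2c) then correspond exactly to the trichotomy of Lemma \ref{lem-5}. For part (1): by comparison $u_\e$ is squeezed, uniformly in $\e$, between the caloric function $h$ with data $h=0$ on $\partial_lQ_T$, $h(\cdot,0)=g$, and the constant $\max(\|g\|_\infty,\sup\vp_+)$, so $\{u_\e\}$ is uniformly bounded. Off its contact set $u_\e$ is caloric, and the regularity theory for the parabolic obstacle problem together with explicit upper and lower barriers built from $\vp_\e$ and the time--independent correctors $w_\e$ of Lemma \ref{lem-5} — which equal $1$ on $T_{a_\e}$, solve $\La w_\e=k$ off the holes, and behave like a rescaled fundamental solution on an $o(\e)$ collar around each hole — give an interior modulus of continuity for $u_\e$ away from those collars. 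Since the collars inside $Q_T$ have total measure $O(\e^{\,n(\alpha_*-1)})\ra0$, for each $\delta>0$ one takes $D_\delta$ to be the complement of a suitable neighbourhood of them, with $|Q_T\bs D_\delta|<\delta$; Arzel\`a--Ascoli and a diagonal argument as $\delta\ra0$ yield a continuous $u$ with $u_\e\ra u$ a.e.\ in $Q_T$, hence $u_\e\wl u$ in $L^p$ by dominated convergence, while the same collar estimate shows the lower and upper half--relaxed limits of $u_\e$ both equal $u$. Uniqueness for the homogenised equation (below) then removes the subsequence.

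For (2a), normalise $w_\e$ so that $\La w_\e=k:=k_{B_{r_o}}$ off the holes. Since $s\mapsto(\vp-s)_+$ is Lipschitz and nonincreasing, the operator $v\mapsto v_t-\La v-k(\vp-v)_+$ is proper parabolic, so the homogenised problem with the data $u=0$ on $\partial_lQ_T$, $u(\cdot,0)=g$ enjoys a comparison principle and it suffices to show $u$ is both a viscosity sub- and a supersolution of $H[u]+k(\vp-u)_+=0$. The supersolution inequality is essentially free: $u_\e$ is a supersolution of $H[u_\e]\le0$ and the operator carries no $\e$, so the half--relaxed limit $u$ is a supersolution of $H[u]\le0$, whence $H[u]+k(\vp-u)_+\le H[u]\le0$. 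For the subsolution inequality I would run the perturbed test function method. Let $\phi\in C^{2,1}$ touch $u$ strictly from above at $(x_o,t_o)$ in a small parabolic cylinder $C_r$, suppose $H[\phi](x_o,t_o)+k(\vp(x_o,t_o)-u(x_o,t_o))_+<-2\theta<0$, put $\beta:=(\vp(x_o,t_o)-u(x_o,t_o))_+$, pick $\delta\in(0,\theta/k)$, and set $\phi_\e:=\phi+(\beta+\delta)w_\e+\rho_\e$ with $\rho_\e\downarrow0$ decaying more slowly than the modulus of convergence on $D_\delta$. Because $w_\e\equiv1$ on $T_{a_\e}$ and $\La w_\e=k$ elsewhere, $H[\phi_\e]=H[\phi]+(\beta+\delta)k<-\theta<0$ throughout $C_r$ (for $r$ small), so $\phi_\e$ is a strict supersolution of $H\le0$. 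Using that on the holes in $C_r$ one has $u_\e\le\vp+o(1)$ while $w_\e\equiv1$ there, and that at the cell corners $w_\e\ra0$ and $u_\e\ra u$, a direct computation shows $u_\e-\phi_\e$ is smaller on the holes and collars (by a positive multiple of the single--hole corrector) than at the corners and is $<-\rho_\e$ on $\partial_pC_r$; hence $u_\e-\phi_\e$ attains its maximum over $\bar C_r$ at an interior point $(x_\e,t_\e)$ lying in the bulk and off the contact set, where $u_\e-\vp_\e>0$. Evaluating the viscosity subsolution property of the obstacle problem \eqref{eq-main}, satisfied by $u_\e$, at $(x_\e,t_\e)$, the obstacle branch is inactive and we obtain $H[\phi_\e](x_\e,t_\e)\ge0$, contradicting strict supersolution--ness. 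Thus $u$ is a viscosity subsolution, hence the viscosity solution, of $H[u]+k_{B_{r_o}}(\vp-u)_+=0$; the lateral and initial conditions survive the limit because $u_\e=0$ on $\partial_lQ_T$ and $u_\e(\cdot,0)=g$ for every $\e$, and the compatibility $\vp\le0$ on $\partial_lQ_T$, $g\ge\vp(\cdot,0)$ keeps the corrected functions compatible with the data. (The supersolution direction can be rerun the same way with $\psi+(\beta-\delta)w_\e-\sigma_\e$, comparing against $H[u_\e]\le0$ at an interior minimum, though it is redundant given the free inequality above.)

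For (2b) and (2c) one simply reads off Lemma \ref{lem-5}. If $a_\e=o(a_\e^*)$, then $\liminf w_\e=-\infty$ for every $k$, so the holes carry asymptotically zero capacity: upper and lower barriers built from $h$ and a vanishing capacitary correction squeeze $u_\e\ra h$, giving $H[u]=0$. If $a_\e^*=o(a_\e)$, then $\liminf w_\e=1$, so the holes are fully felt: the correctors pin $u_\e$ from below by functions converging to $\vp$ on a set of asymptotically full parabolic capacity, forcing $u\ge\vp$, while minimality of each $u_\e$ passes to the limit, so $u$ is the least viscosity supersolution of $H[u]\le0$ above $\vp$ with the prescribed data.

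The main obstacle is the ``thin collar'' analysis underlying both part (1) and the location of $(x_\e,t_\e)$ in (2a): one must control precisely how $u_\e$ and the corrector $w_\e$ behave in the $o(\e)$ neighbourhood of each hole — that $w_\e$ equals $1$ on $T_{a_\e}$, is comparable to a rescaled fundamental solution just outside, and decays to $0$ at the cell corners, and that $u_\e$ hugs $\vp_\e$ on the holes and is ``almost flat'' (oscillation $O(\e)$ per cell) away from the collars — so that the obstacle term is inactive precisely where the perturbed test function is tested. These are the parabolic counterparts of the corrector estimates and the flatness lemma of \cite{CL}; establishing them for the heat operator, and checking that $w_\e$ is regular enough off $\partial T_{a_\e}$ to serve as a viscosity test function, is where the real work lies.
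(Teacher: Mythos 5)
Your overall architecture---cell-wise flatness for part (1), the corrector $w_{\e}$ of Lemma \ref{lem-5} as the perturbation in part (2), and the trichotomy read off from $\liminf w_{\e}$---is the same as the paper's. For the subsolution half of (2a) you run the perturbed-test-function method and close by invoking the subsolution property of $u_{\e}$ off the contact set at an interior maximum of $u_{\e}-\phi_{\e}$; the paper (Lemma \ref{lem-6}) instead builds $Q_{\e}=Q+\xi_0w_{\e}$ with $Q_{\e}\ge\vp_{\e}$ on the holes, observes that $\min(u_{\e},Q_{\e})$ is then an admissible supersolution of the $\e$-obstacle problem \eqref{eq-main}, and contradicts the fact that $u_{\e}$ is the \emph{least} supersolution. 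The minimality route avoids having to locate the touching point and to know that $u_{\e}$ is caloric there, and it is the same device that powers the paper's proof of (2b); your variant is workable but carries the extra burden (which you acknowledge) of showing that the maximum lands in the bulk, off the contact set. For part (1), note also that ``regularity theory for the parabolic obstacle problem'' does not by itself give a modulus uniform in $\e$, since the obstacle $\vp_{\e}$ degenerates as $\e\to0$; the paper's actual mechanism is the discrete gradient estimate of Lemma \ref{lem-d-c11} (which exploits the $\e$-periodicity of $T_{a_{\e}}$ so that $\Delta^{\e}_e\vp_{\e}$ stays bounded) combined with the oscillation Lemmas \ref{lem-osc-p} and \ref{lem-osc}.

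The genuine gap is in the supersolution half of (2a). You declare it ``essentially free'' because $H[u]\le0$ passes to the limit and then write $H[u]+k(\vp-u)_+\le H[u]\le0$. That inequality is backwards: since $k(\vp-u)_+\ge0$ one has $H[u]+k(\vp-u)_+\ge H[u]$, and on the set $\{u<\vp\}$ the supersolution property demands the strictly stronger bound $H[u]\le-k(\vp-u)<0$. This extra negative contribution is exactly the capacitary sink produced by the spikes that $u_{\e}$ must form to stay above $\vp$ on each hole while its limit dips below $\vp$; it cannot be extracted from $H[u_{\e}]\le0$ alone and requires the symmetric corrector argument (touch $u$ from below by a polynomial $P$ with $H[P]+k(\vp-P)_+>\mu_0>0$, subtract $\xi_0w_{\e}$, and contradict $u_{\e}\ge\vp_{\e}$ on the holes)---the step the paper defers to ``an argument similar to Lemma 4.1 of [CL]'' and that you sketch in your final parenthesis but then discard as redundant. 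It is not redundant: without it the homogenized equation is established only as an inequality precisely on the set where the new term $k_{B_{r_o}}(\vp-u)_+$ is actually active.
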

\begin{remark}
\begin{enumerate}
\item The boundary data above can be replaced by any smooth function. And $H[ u]=f(x,t)$ can replaced by the heat equation.
\item $T_{a_{\e}}=\{a_{\e}x: x\in D\}$ can be any domain with continuous boundary as long as there is two balls $B_{r_1}\subset D\subset B_{r_2}$
for $0<r_1\leq r_1<\infty$.   $B_{r_1}\subset D\subset B_{r_2}$ is
enough to construct super- and sub-solutions and then to find the
behavior of correctors, Lemma \ref{lem-5}. Then $k=cap(D)$ and $
k_{B_{r_o}}=k_D$.
\end{enumerate}
\end{remark}


\subsection{Estimates and Convergence} Every $\e$-periodic function
is constant on $\e$-periodic lattice $\e\Z^n$. The first observation
is that the  difference quotient of $u_{\e}$, instead of the first
derivative of $u_{\e}$, is uniformly bounded. The next important
observation is that a suitable scaled $u_{\e}$ is very close to a
constant multiple of a fundamental solution in a neighborhood of the
support of the oscillating obstacle, $T_{a_{\e}}$ and that $u_{\e}$
will be almost constant outside of it. These observations will be
proved in the following lemmas.

\subsubsection{ Estimates of $ u_{\e}$}\label{subsec-u-epsilon}

\begin{lemm}\label{lem-series-for-corollary-2-10----1}
For each unit direction $e\in \Z^n$, set
$$\Delta^{\epsilon}_{e}u_{\e}(x,t)=\frac{u_{\e}(x+\e
e,t)-u_{\e}(x,t)}{\e}.$$ Then
$$|\Delta^{\epsilon}_{e}u_{\e}(x,t)|<C$$ uniformly.
\label{lem-d-c11}
\end{lemm}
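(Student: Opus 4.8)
The plan is to exploit two invariances of the problem $(\mathbf{H_\e})$. First, the heat operator $H$ is invariant under the translation $x\mapsto x+\e e$. Second, since $e\in\Z^n$, the perforation $T_{a_\e}=\cup_{m\in\e\Z^n}T_{a_\e}(m)$ is invariant under the same translation, so $\chi_{T_{a_\e}}(x+\e e)=\chi_{T_{a_\e}}(x)$. Consequently the only ingredients of $(\mathbf{H_\e})$ that genuinely move under $x\mapsto x+\e e$ are the smooth profile $\vp$ carried on the holes and the smooth initial datum $g$, and both are Lipschitz; hence the shifted problem differs from the original one by at most an $O(\e)$ perturbation of the obstacle and of the data. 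Turning this into a pointwise bound on $u_\e(x+\e e,t)-u_\e(x,t)$ by the comparison/least–supersolution principle is the whole content of the lemma.

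First I would record the uniform near–boundary estimate: there is $C_0$, independent of $\e$, with $|u_\e(x,t)|\le C_0\,\mathrm{dist}(x,\partial\Omega)$ for $x$ in a fixed neighborhood of $\partial\Omega$. This follows from $\e$–independent barriers: from below, $u_\e$ is above the solution of the heat equation with data $(0,g)$ by the comparison principle; from above, since $\vp\le 0$ on $\partial_l Q_T$ and $\vp$, $g$ are smooth, one builds a heat supersolution lying above $\vp_\e$ with the correct lateral/initial data and linear growth off $\partial\Omega$ (using smoothness of $\partial\Omega$). Here $\|u_\e\|_{L^\infty}\le C$ uniformly, which is contained in the same barrier bounds.

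Next I would use the obstacle perturbation estimate. Since $\chi_{T_{a_\e}}(x+\e e)=\chi_{T_{a_\e}}(x)$,
\[
|\vp_\e(x+\e e,t)-\vp_\e(x,t)|=|\vp(x+\e e,t)-\vp(x,t)|\,\chi_{T_{a_\e}}(x)\le \|\D\vp\|_{L^\infty}\,\e ,
\]
and likewise $|g(x+\e e)-g(x)|\le\|\D g\|_{L^\infty}\,\e$. Fix $C$ larger than $\|\D\vp\|_{L^\infty}$, $\|\D g\|_{L^\infty}$ and $3C_0$, set $\Omega'=\Omega\cap(\Omega+\e e)$, and define $\tilde u(x,t)=u_\e(x-\e e,t)+C\e$ on $\Omega'\times(0,T]$. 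Then $H[\tilde u]\le 0$ (translation invariance), $\tilde u\ge\vp_\e$ on $\Omega'\times(0,T]$ by the displayed bound, $\tilde u(x,0)=g(x-\e e)+C\e\ge g(x)$, and on $\partial\Omega'\times(0,T]$ — whose points lie within a dimensional multiple of $\e$ of $\partial\Omega$ — the near–boundary estimate gives $\tilde u\ge -C_0\e+C\e\ge u_\e$. Thus $\tilde u$ is a viscosity supersolution of $(\mathbf{H_\e})$ on $\Omega'$ with lateral/initial data dominating those of $u_\e|_{\Omega'}$. Since $u_\e|_{\Omega'}$ is the least such supersolution (localization of the Perron construction) and the least supersolution is monotone in its data, $u_\e(x,t)\le\tilde u(x,t)=u_\e(x-\e e,t)+C\e$ on $\Omega'$. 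Shifting $x\mapsto x+\e e$, and using the near–boundary estimate to cover $\Omega\bs\Omega'$, gives $u_\e(x+\e e,t)-u_\e(x,t)\le C\e$; replacing $e$ by $-e$ yields the matching lower bound, hence $|\Delta^\e_e u_\e|\le C$.

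The main obstacle I anticipate is the boundary layer: one must know, uniformly in $\e$, both that $u_\e$ vanishes Lipschitz–continuously at $\partial\Omega$ and that the restriction of the least supersolution to the slightly shrunken domain $\Omega'$ is again the least supersolution of the corresponding problem. Both are standard for the uniformly parabolic obstacle problem with smooth $\partial\Omega$ (barriers and Perron's method, as in \cite{CL}), but they are exactly the steps that would fail for rough $\partial\Omega$; the essential point, the almost–translation–invariance of $T_{a_\e}$ and of $(\mathbf{H_\e})$ under $x\mapsto x+\e e$, is elementary.
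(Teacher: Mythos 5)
Your proof is correct, but it follows a genuinely different route from the paper's. The paper approximates $u_{\e}$ by solutions $u_{\e,\delta}$ of a penalized equation, takes the difference quotient of that equation, and runs a Bernstein-type maximum-principle argument on $Z=\sup|\Delta^{\e}_{e}u_{\e,\delta}|^2$: at an interior maximum the sign of $\beta_{\delta}'$ forces $Z\leq|\Delta^{\e}_{e}\vp_{\e}|^2$, and the boundary contribution is controlled by $C^2$-estimates for the heat equation in the region where the penalty is inactive. You instead compare $u_{\e}$ directly with its translate $u_{\e}(\cdot-\e e,\cdot)+C\e$ on $\Omega\cap(\Omega+\e e)$, exploiting the $\e\Z^n$-invariance of $T_{a_{\e}}$ and the characterization of $u_{\e}$ as the least supersolution. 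Both arguments hinge on the same structural fact — that $\Delta^{\e}_{e}\vp_{\e}$ is uniformly bounded because the perforation is $\e$-periodic and $\vp$, $g$ are Lipschitz — but what they buy is different: the paper's route yields the estimate already at the level of the smooth approximations $u_{\e,\delta}$ (which it reuses for the time-regularity Lemma 2.5 and again in Sections 3--4), while yours stays entirely at the level of the Perron/viscosity solution and avoids differentiating the penalized equation altogether. The price you pay is the boundary layer: you must establish the uniform estimate $|u_{\e}(x,t)|\leq C_0\,\mathrm{dist}(x,\partial\Omega)$ and justify the cut-and-paste localization $\min(u_{\e},\tilde u)$ across $\partial\Omega'$ (made rigorous by the strict inequality $\tilde u\geq u_{\e}+(C-C_0)\e$ there). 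Both steps are standard barrier/Perron facts of the same depth as the boundary $C^2$-estimate the paper invokes, and both implicitly assume the compatibility $g=0$ on $\partial\Omega$ near the corner $\partial\Omega\times\{0\}$, an assumption the paper also makes tacitly; so I regard your argument as complete at the paper's own level of rigor.
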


\begin{proof}
$u_{\e}$ can be approximated by the solutions, $u_{\e,\delta}$, of
the following penalized equations, \cite{Fr},
\begin{equation}
\begin{cases}
\begin{aligned}
-H[ u_{\e,\delta}](x,t)&+\beta_{\delta}(u_{\e,\delta}(x,t)-\vp_{\e}(x,t))=0\qquad\textrm{in $Q_T$}\\
 u_{\e,\delta}(x,t)&=0\qquad \qquad \qquad \qquad \quad \qquad
\qquad \textrm{on $\partial_l Q_T$}\\
u_{\e,\delta}(x,0)&=g(x) \qquad  \qquad \qquad \qquad \qquad
\mbox{on $\Omega\times\{0\}$}
\end{aligned}
\end{cases}
\label{eq-li-penalty}
\end{equation}
where the penalty term $\beta_{\delta}(s)$ satisfies
\begin{equation*}
\begin{array}{c}
\beta_{\delta}'(s)\geq 0,\quad \beta_{\delta}''(s)\leq 0,\quad \beta_{\delta}(0)=-1,\\
\beta_{\delta}(s)=0\quad \textrm{for $s>\delta$},\qquad
\beta_{\delta}(s)\ra -\infty\quad\textrm{for $s<0$}.
\end{array}
\label{eq-heat-s}
\end{equation*}
Let $Z=\sup_{(x,t)\in Q_T}|\Delta^{\epsilon}_{e}u_{\e ,\delta}|^2$
and assume that the maximum $Z$ is achieved at $(x_0,t_0)$. Then we
have , at $(x_0,t_0)$,
$$H[|\Delta^{\epsilon}_{e}u_{\e ,\delta}|^2]\leq 0,\quad\textrm{and}\quad \D |\Delta^{\epsilon}_{e}u_{\e ,\delta}|^2=0.$$
 By taking a difference quotient, we have
$$-H[\Delta^{\epsilon}_{e}u_{\e , \delta}]+\beta_{\delta}'(\cdot)(\Delta^{\epsilon}_{e}u_{\e,\delta}(x,t)-\Delta^{\epsilon}_{e}\vp_{\e}(x,t))=0.$$
Hence
\begin{equation*}
\begin{aligned}
-H[|\Delta^{\epsilon}_{e}u_{\e ,\delta}|^2]&+2|\nabla
(\Delta^{\epsilon}_{e}u_{\e
,\delta})|^2\\
&+2\beta_{\delta}'(\cdot)(|\Delta^{\epsilon}_{e}u_{\e,\delta}(x,t)|^2-\Delta^{\epsilon}_{e}u_{\e}(x,t)\Delta^{\epsilon}_{e}\vp_{\e}(x,t))=0.
\end{aligned}
\end{equation*}
Since the set $T_{a_{\e}}$ is $\e-$periodic and $\vp$ is $C^1$, we
know $|\Delta^{\epsilon}_{e}\vp_{\e}|<C$ uniformly.\\
\indent If
$Z=|\Delta^{\epsilon}_{e}u_{\e,\delta}|^2>|\Delta^{\epsilon}_{e}\vp_{\e}|^2$
at an interior point $(x_0,t_0)$, we can get a contradiction.
Therefore $Z\leq |\Delta^{\epsilon}_{e}\vp_{\e}|^2$ in the interior of $Q_T$. On the other hand,
$u_{\e}>\vp$ and then $\beta_{\delta}(u_{\e,\delta}-\vp_{\e})=0$ on
a uniform neighborhood of $\partial_l Q_T$. From the $C^2$-estimate
of the solution for the heat equation, we have
$|\Delta^{\epsilon}_{e}u_{\e,\delta}|^2
<C\sup_{Q_T}|u_{\e,\delta}|<C\sup_{Q_T}|\vp|$ on $\partial_lQ_T$.
Hence, by the maximum principle, $Z\leq
C(\|\vp\|_{C^1(Q_T)}+\|g\|_{C^1(\Omega)})$.
\end{proof}
\marginpar{\tiny\color{red}
  }
\begin{corollary} we have

\item $|u_{\e}(x_1,t)-u_{\e}(x_2,t)|\leq C(|x_1-x_2|$ for a uniform constant $C$ when
       $x_1-x_2\in \e\Z^n$.
       \label{cor-d-lip}
\end{corollary}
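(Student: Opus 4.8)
The plan is to integrate the discrete gradient bound of Lemma \ref{lem-d-c11} along a lattice path joining the two points. Since $x_1-x_2\in\e\Z^n$, write $x_1-x_2=\e\sum_{j=1}^{n}k_j e_j$ with $e_j$ the standard coordinate directions and $k_j\in\Z$, and set $N=\sum_{j=1}^{n}|k_j|$. Choose a chain $x_2=y_0,y_1,\dots,y_N=x_1$ in which each increment satisfies $y_{i+1}-y_i=\pm\e e_{j(i)}$ for an appropriate coordinate $e_{j(i)}$; this is just the obvious coordinate-by-coordinate path from $x_2$ to $x_1$.

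By the triangle inequality, the definition of $\Delta^{\epsilon}_{e}$, and Lemma \ref{lem-d-c11},
$$|u_{\e}(x_1,t)-u_{\e}(x_2,t)|\le\sum_{i=0}^{N-1}|u_{\e}(y_{i+1},t)-u_{\e}(y_i,t)|=\e\sum_{i=0}^{N-1}\bigl|\Delta^{\epsilon}_{e_{j(i)}}u_{\e}(y_i,t)\bigr|\le C\e N,$$
where $C$ is the uniform constant from Lemma \ref{lem-d-c11}. Finally, by equivalence of the $\ell^1$ and $\ell^2$ norms on $\R^n$,
$$\e N=\e\sum_{j=1}^{n}|k_j|\le\sqrt{n}\,\e\Bigl(\sum_{j=1}^{n}k_j^2\Bigr)^{1/2}=\sqrt{n}\,|x_1-x_2|,$$
so the corollary follows with constant $\sqrt{n}\,C$.

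The only point that needs care is that every intermediate lattice point $y_i$ must lie in $\Omega$, so that both the value $u_{\e}(y_i,t)$ and the bound $|\Delta^{\epsilon}_{e}u_{\e}(y_i,t)|\le C$ are available there. When $\Omega$ is convex this is automatic for the monotone coordinate path; for a general smooth bounded $\Omega$ one instead extends $u_{\e}$ by $0$ outside $\Omega$ (consistent with the lateral boundary condition) and orders the coordinate steps so the chain stays within a fixed tubular neighborhood of the segment $[x_2,x_1]$, where the difference-quotient estimate still holds because $u_{\e}$ agrees with a barrier near $\partial_l Q_T$ and $|\Delta^{\epsilon}_{e}\vp_{\e}|<C$ uniformly. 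Thus the genuine work is this domain/path bookkeeping; the quantitative Lipschitz estimate along the lattice is immediate from Lemma \ref{lem-d-c11}, and I would expect the path argument to be the main (minor) obstacle.
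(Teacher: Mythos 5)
Your argument is correct and is exactly the (only natural) route the paper intends: the corollary is stated as an immediate consequence of Lemma \ref{lem-d-c11}, obtained by telescoping the uniform difference-quotient bound along a coordinate lattice path and comparing $\ell^1$ and $\ell^2$ norms. Your remark about extending $u_{\e}$ by zero outside $\Omega$ to keep the chain well-defined is consistent with the paper's own convention elsewhere, so nothing is missing.
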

\begin{lemm}[Regularity in Time]\label{lem-1}
$$\|D_tu_{\e}(x,t)\|\leq C.$$
\end{lemm}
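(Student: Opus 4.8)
The plan is to run the penalization argument of Lemma~\ref{lem-d-c11} once more, but now differentiating the penalized equation in the time variable rather than taking a spatial difference quotient. Recall that $u_{\e}$ is approximated by the solutions $u_{\e,\delta}$ of the penalized problem \eqref{eq-li-penalty}. Differentiating \eqref{eq-li-penalty} in $t$ and writing $w_{\delta}=\partial_t u_{\e,\delta}$, one obtains the linear parabolic equation
\begin{equation*}
-H[w_{\delta}]+\beta_{\delta}'(u_{\e,\delta}-\vp_{\e})\bigl(w_{\delta}-\partial_t\vp_{\e}\bigr)=0 .
\end{equation*}
Since the perforation set $T_{a_{\e}}$ is stationary in time and $\vp\in C^1(Q_T)$, the forcing obeys $|\partial_t\vp_{\e}|=|\partial_t\vp|\,\chi_{T_{a_{\e}}}\le\|\vp\|_{C^1(Q_T)}$, a bound independent of $\e$.

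Next I would apply the maximum principle to $w_{\delta}$. Rewriting the equation as $-H[w_{\delta}]+c_{\delta}\,w_{\delta}=c_{\delta}\,\partial_t\vp_{\e}$ with $c_{\delta}:=\beta_{\delta}'(u_{\e,\delta}-\vp_{\e})\ge 0$, the function $w_{\delta}-\|\vp\|_{C^1(Q_T)}$ is a subsolution of $-H[\,\cdot\,]+c_{\delta}(\,\cdot\,)\le 0$, so (weak parabolic maximum principle with a nonnegative zeroth-order coefficient) its positive part attains its maximum over $\overline{Q_T}$ on the parabolic boundary; symmetrically for the minimum. On the lateral boundary $u_{\e,\delta}\equiv 0$, hence $w_{\delta}=0$ there. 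At $t=0$ the equation itself gives $w_{\delta}(x,0)=\La g(x)-\beta_{\delta}(g(x)-\vp_{\e}(x,0))$, which is bounded uniformly in $\e,\delta$ once $g$ is smooth and compatible with the obstacle at $t=0$ (so the penalty term stays bounded); equivalently one uses the linear-in-time barriers $g(x)\pm Ct$ near $t=0$. Combining these, $\|\partial_t u_{\e,\delta}\|_{L^{\infty}(Q_T)}\le C$ with $C$ independent of $\e$ and $\delta$.

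Finally, passing $\delta\to 0$ one has $u_{\e,\delta}\to u_{\e}$, and the uniform bound on $\partial_t u_{\e,\delta}$ descends to a Lipschitz-in-time estimate $|u_{\e}(x,t_1)-u_{\e}(x,t_2)|\le C|t_1-t_2|$, which is exactly $\|D_t u_{\e}\|\le C$. An alternative argument avoiding penalization exploits directly that $u_{\e}$ is the \emph{least} viscosity super-solution of \eqref{eq-main}: for $h>0$ the shifted function $(x,t)\mapsto u_{\e}(x,t+h)+Ch$ is again a viscosity super-solution of the heat operator lying above the obstacle $\vp_{\e}$ as soon as $C\ge\|\partial_t\vp\|_{L^{\infty}}$, and it dominates the data at $t=0$, so minimality forces $u_{\e}(x,t)\le u_{\e}(x,t+h)+Ch$; the reverse inequality follows the same way. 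I expect the only genuine obstacle in either route to be the behavior at the initial time: without a compatibility hypothesis on $g$ the time derivative can degenerate like $t^{-1/2}$ as $t\to 0$, so the estimate must either be read on interior slabs $\{t\ge\tau>0\}$ (where it also follows from interior parabolic regularity once Lemma~\ref{lem-d-c11} controls the spatial oscillation) or be accompanied by the usual smoothness-and-compatibility assumption on $g$ that removes the initial boundary layer.
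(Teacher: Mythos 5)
Your main argument is essentially the paper's proof: differentiate the penalized equation \eqref{eq-li-penalty} in time, use $\beta_{\delta}'\ge 0$ and the uniform bound $|\partial_t\vp_{\e}|\le \|\vp\|_{C^1(Q_T)}$ to rule out a large interior maximum of $\partial_t u_{\e,\delta}$ (the paper runs the maximum principle on $|(u_{\e,\delta})_t|^2$ rather than on $\partial_t u_{\e,\delta}$ itself, which is only a cosmetic difference), and then control the parabolic boundary using the vanishing of the penalty term near $\partial_p Q_T$ together with the smoothness and compatibility of $g$. Your secondary, penalization-free route via minimality of the least supersolution and the time-shift comparison $u_{\e}(x,t)\le u_{\e}(x,t+h)+Ch$ is a genuinely different and cleaner argument at the viscosity level, and your caveat about the initial layer correctly pinpoints the compatibility hypothesis on $g$ that the paper uses implicitly.
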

\begin{proof}
Let $T=\sup_{(x,t)\in Q_T}|(u_{\e ,\delta})_t|^2$ and assume that
the maximum $T$ is achieved at $(x_1,t_1)$. Then we have , at
$(x_1,t_1)$,
$$H[|(u_{\e ,\delta})_t|^2]\leq 0$$
 By taking a time derivative in (\ref{eq-li-penalty}) with respect to time $t$, we have
$$-H[(u_{\e ,\delta})_t]+\beta_{\delta}'(\cdot)((u_{\e ,\delta})_t-(\vp_{\e})_t)=0.$$
Hence
\begin{equation*}
\begin{aligned}
-H[|(u_{\e ,\delta})_t|^2]&+2|\nabla (u_{\e ,\delta})_t|^2\\
&+2\beta_{\delta}'(\cdot)(|(u_{\e ,\delta})_t|^2-(u_{\e
,\delta})_t(\vp_{\e})_t)=0.
\end{aligned}
\end{equation*}
We also know $|(\vp_{\e})_t|<C$ uniformly.

If $T=|(u_{\e ,\delta})_t|^2>|(\vp_{\e})_t|^2$ at an interior point
$(x_1,t_1)$,  we can get a contradiction. Therefore $T\leq |(\vp_{\e})_{t}|^2<C$ for some $C>0$ in the interior of $Q_T$. On the other hand, $0=u_{\e,\delta}\geq \vp_{\e}$ on
$\partial_l Q_T$ and $u_{\e, \delta}(x,0)=g(x)\geq \vp(x,0)$ then
$\beta_{\delta}(u_{\e,\delta}-\vp_{\e})=0$ on a small neighborhood
of $\partial_p Q_T$. By the $C^2$-estimate of the solution of the
heat equation, we get the desired bound on the boundary and the lemma follows.
\end{proof}
\begin{lemm}\label{lem-3}
When $\alpha<\alpha_*$, $u_{\epsilon}$ satisfies
\begin{equation*}
\vp(x,t)-C\frac{\e^{\beta}}{a_{\e}^{\beta-2}}\leq u_{\e}(x,t)
\end{equation*}
for some $\beta>n$. In addition, there is  a Lipschitz function $u$,
such that
\begin{enumerate}
\item
\begin{equation*}
-C\frac{\e^{\beta}}{a_{\e}^{\beta-2}}\leq u_{\e}(x,t)- u\leq 0
\end{equation*}
, which implies the uniform convergence of $u_{\e}$ to $u$.
\item $u$ is a least super-solution of (\ref{eq-la-thick}) in Theorem (\ref{thm-i}).
\end{enumerate}
\label{lem-thick}
\end{lemm}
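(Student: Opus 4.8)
I would establish the three claims in the order written. The driving fact is that for $\alpha<\alpha_*$ the holes $T_{a_\e}$ are capacitively large: by Lemma \ref{lem-5} the correctors satisfy $w_\e\to1$, so that $\widehat w_\e:=1-w_\e\ge0$ tends to $0$; rescaling the explicit barriers used in the proof of that lemma — the powers $|x-m|^{2-\beta}$ with $\beta>n$, which are sub-harmonic away from the lattice point $m$ — gives in this regime the quantitative estimate
\[
0\le\widehat w_\e(x)\le C\,\frac{\e^{\beta}}{a_\e^{\beta-2}}\qquad(x\in\R^n)
\]
for a suitable $\beta>n$ (which must be taken close to $n$ when $\alpha$ is close to $\alpha_*$, precisely the threshold beyond which $\e^{\beta}/a_\e^{\beta-2}$ no longer vanishes). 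I take this as input from \cite{CL}.

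\emph{Step 1 (lower barrier).} Since $u_\e\ge\vp_\e$ and $\vp_\e=\vp$ on $T_{a_\e}$, on every hole $u_\e\ge\vp$. I would then construct, cellwise, a sub-solution $\Psi_\e$ of the obstacle problem (\ref{eq-main}) which, on the part of $\{\vp>0\}$ bounded away from the holes and from $\{\vp=0\}$, has essentially the shape $\vp(x,t)\,w_\e(x)$ pushed down by $C\e^{\beta}/a_\e^{\beta-2}$, and which is truncated from above by $\vp_\e$ on the $O(a_\e)$-collar of each hole and on a fixed neighbourhood of $\{\vp\le0\}$ (where the sub-solution condition for the obstacle problem becomes vacuous because $\Psi_\e\le\vp_\e$ there). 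Taking the source $k$ in $\La w_\e=k$ large — legitimate since Lemma \ref{lem-5} gives $w_\e\to1$ for \emph{every} $k>0$ — and using $\La(\vp w_\e)=(\La\vp)w_\e+2\D\vp\cdot\D w_\e+k\vp$, on the remaining region (where $\vp$ is bounded below and $\D w_\e$ is moderate) the favourable term $k\vp$ dominates both $H[\vp]$ and the cross term, so that $\Psi_\e$ is a genuine heat sub-solution there. With $\Psi_\e\le0=u_\e$ on $\partial_lQ_T$ and $\Psi_\e\le g=u_\e$ on $\Omega\times\{0\}$, the comparison principle for the obstacle problem — $u_\e$ being a super-solution of the heat equation throughout $Q_T$ — yields $\Psi_\e\le u_\e$, and since $w_\e\ge1-C\e^{\beta}/a_\e^{\beta-2}$ this gives $\vp-C\e^{\beta}/a_\e^{\beta-2}\le u_\e$, the asserted lower bound.

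\emph{Step 2 (limit, squeeze, minimality).} Let $u$ be the least viscosity super-solution of (\ref{eq-la-thick}); it exists by a Perron argument, and the obstacle and data being Lipschitz it is Lipschitz. Any super-solution $v$ of (\ref{eq-la-thick}) has $H[v]\le0$ and, by the minimum principle and the non-negativity of the data, $v\ge0$; hence $v\ge\max(\vp,0)\ge\vp_\e$, so $v$ is a super-solution of the $\e$-problem (\ref{eq-main}). As $u_\e$ is the \emph{least} such, $u_\e\le v$; taking $v=u$ gives $u_\e\le u$ (and $\e\to0$ re-proves the minimality of $u$). For the matching lower bound, recall that on its non-coincidence set $\{u>\vp\}$ the least super-solution $u$ solves the heat equation, while $\partial\{u>\vp\}$ lies in the free boundary $\{u=\vp\}$ or in $\partial_pQ_T$; thus $u-C\e^{\beta}/a_\e^{\beta-2}$ is a heat sub-solution on $\{u>\vp\}$ lying below $u_\e$ on $\partial\{u>\vp\}$ — on the free boundary by Step 1 (there $u=\vp$), on $\partial_pQ_T$ because $u$ and $u_\e$ share the same data. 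Comparison with the heat super-solution $u_\e$ gives $u-C\e^{\beta}/a_\e^{\beta-2}\le u_\e$ on $\{u>\vp\}$, and on $\{u=\vp\}$ it is Step 1 rewritten; altogether
\[
-C\,\frac{\e^{\beta}}{a_\e^{\beta-2}}\le u_\e-u\le 0,
\]
which for $\alpha<\alpha_*$ is the claimed uniform convergence $u_\e\to u$.

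\emph{Main difficulty.} The heart of the proof is Step 1: building $\Psi_\e$ so that it is simultaneously a bona fide heat sub-solution on the part of $\{\vp>0\}$ outside the holes — which is what forces the use of the full periodic corrector $w_\e$ with a large source rather than a single-cell capacitary potential, the latter decaying like $(a_\e/\e)^{n-2}$ and carrying no information at the cell boundary — and controlled in the two delicate regions, the collar of each hole (where $|\D w_\e|$ is large) and the neighbourhood of $\{\vp=0\}$ (where $k\vp$ can no longer absorb $H[\vp]$), all without degrading the rate $O(\e^{\beta}/a_\e^{\beta-2})$. Everything after that is routine: the comparison principle, the a priori estimates of Section \ref{subsec-u-epsilon}, and the standard structure of the least super-solution of the parabolic obstacle problem.
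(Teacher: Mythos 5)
Your overall architecture --- a lower barrier of the form $\vp-C\e^{\beta}/a_{\e}^{\beta-2}$, followed by a two-sided squeeze against the least super-solution $u$ of the limit obstacle problem --- is the paper's, and your Step 2 is essentially the paper's argument: the paper gets the upper half more directly by noting that, thanks to Step 1, $u_{\e}+C\e^{\beta}/a_{\e}^{\beta-2}$ is a super-solution lying above $\vp$, hence above the least such super-solution; your detour through the non-coincidence set $\{u>\vp\}$ reaches the same conclusion. The problem is Step 1, exactly where you place the ``main difficulty,'' and your construction there does not close.

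The paper does not use the periodic corrector $w_{\e}$, nor a product $\vp\,w_{\e}$. It sets $h_{\e}(x)=k\sup_{m}\bigl[\e^{\beta}|x-m|^{2-\beta}-\e^{\beta}a_{\e}^{2-\beta}\bigr]$ with $\beta>n$ --- precisely the single-cell power potentials you dismiss, except that the exponent $\beta>n$ makes each of them strictly subharmonic, so the supremum over all cells still satisfies $\La h_{\e}\geq c_{0}k$, vanishes exactly on $\partial T_{a_{\e}}$, and obeys $0\geq h_{\e}\geq -k\e^{\beta}/a_{\e}^{\beta-2}$. Then, for each fixed $(x_{0},t_{0})\in Q_{T,a_{\e}}$, one compares $u_{\e}$ with the \emph{additive} barrier $\overline h=\vp(x_{0},t_{0})+\D\vp(x_{0},t_{0})\cdot(x-x_{0})-\tfrac{c_{0}k}{4n}|x-x_{0}|^{2}+M(t-t_{0})+h_{\e}(x)$ on $\{B_{\rho_{0}}(x_{0})\setminus T_{a_{\e}}\}\times[\tfrac12 t_{0},t_{0}]$, using only $\overline h\leq\vp\leq u_{\e}$ on $\partial T_{a_{\e}}$. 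Because the barrier is a sum, no cross term $\D\vp\cdot\D w_{\e}$ ever appears, and the bound is obtained at \emph{every} point outside the holes, collar included. Your product ansatz $\Psi_{\e}\approx\vp\,w_{\e}-C\e^{\beta}/a_{\e}^{\beta-2}$ creates that cross term, which is genuinely unbounded near $\partial T_{a_{\e}}$, and the proposed cure --- truncation from above by $\vp_{\e}$ on an $O(a_{\e})$-collar --- is not a cure: (i) $\min(\Psi_{\e},\vp_{\e})$ is not a sub-solution ($\vp_{\e}$ is not one, and min does not preserve sub-solutions); (ii) on the collar $\vp_{\e}=0$ while just outside it $\Psi_{\e}\approx\vp>0$, so you cannot impose $\Psi_{\e}\leq\vp_{\e}$ on the collar and still join continuously to the unmodified $\Psi_{\e}$ at the collar's outer edge; and (iii) even granting the truncation, the comparison would only give $u_{\e}\geq 0$ on the collar, not the asserted $u_{\e}\geq\vp-C\e^{\beta}/a_{\e}^{\beta-2}$, which the lemma (and your own Step 2, which invokes Step 1 on the free boundary) needs pointwise. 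The fix is to abandon the product structure and use the paper's pointwise additive barrier.
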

\begin{proof}
(1.)Since $u_{\epsilon} \geq \varphi$ in
$T_{a_{\epsilon}}\times(0,T]$, we show that the inequality can be
satisfied in $Q_{T,a_{\epsilon}}$. For a given $\delta_0>0$, let
\begin{equation*}
h_{\e}(x)=k\sup_{m\in \e\Z,\,\, x\in
\Omega_{\epsilon}}\left[\frac{\e^{\beta}}{|x-m|^{\beta-2}}-\frac{\e^{\beta}}{a_{\e}^{\beta-2}}\right].
\end{equation*}
Then $H[ h_{\e}]\geq c_0k$ for $\beta>n$ and a uniform constant
$c_0$. In addition, $0\geq h_{\e}>
-\frac{k\e^{\beta}}{\alpha_{\e}^{\beta-2}}$ in $\Omega_{\epsilon}$.
For any point $(x_{0},t_0)$ in $Q_{T,a_{\epsilon}}$, we choose a
number $\rho_0$ and large numbers $k,M>0$ such that
\begin{equation*}
\begin{aligned}
\overline{h}(x,t)=-\frac{c_{0}k}{4n}|x-x_{0}|^2&+\D
\vp(x_{0},t_0)\cdot
(x-x_{0})\\
&+\vp(x_{0},t_0)+h_{\e}(x)+M(t-t_0)<0\leq u_{\e}(x,t)
\end{aligned}
\end{equation*}
on $\partial B_{\rho_0}(x_{0})\times [\frac{1}{2}t_0,t_0]$ and
$B_{\rho_0}(x_0)\times \{\frac{1}{2}t_0\}$ and
\begin{equation*}
\overline{h}(x,t)<\varphi(x,t)\leq u_{\e}(x,t)
\end{equation*}
on $\{\partial T_{a_{\epsilon}}\cap B_{\rho_0}(x_{0})\}\times
[\frac{1}{2}t_0,t_0]$. By the choice of numbers, we get
\begin{equation*}
H[\overline{h}]=\frac{c_{0}k}{2}-M \geq 0.
\end{equation*}
Therefore $\overline{h}\leq u_{\e}$ in $\{B_{\rho_0}(x_0)\backslash T_{a_{\epsilon}}\}\times[\frac{1}{2}t_0,t_0]$, which gives us $\vp(x_{0},t_0)-C\frac{\e^{\beta}}{a_{\e}^{\beta-2}}\leq u_{\e}(x_{0},t_0)$. Moreover, the least super solution $v(x,t)$ above $\vp(x,t)$ is
greater than $u_{\e}$ which is the least super-solution for a
smaller obstacle.
Similarly, the lower bound of $u_{\e}$ above implies $v(x,t)<u_{\e}+C\frac{\e^{\beta}}{a_{\e}^{\beta-2}}$.\\

Since $u_{\e}$ is a super-solution, this implies (2).


\end{proof}

\begin{lemm}\label{lem-series-for-corollary-2-10----3}
\item Set $a_{\e}=(\frac{\epsilon a^*_{\e}}{2})^{1/2}$. Then
\begin{equation*}
\osc_{\partial B_{a_{\epsilon}}(m)\times
[t_0-a^2_{\epsilon},t_0]}u_{\e}=O(\e^{\gamma}),
\end{equation*}
for $m\in \e\Z^n\cap\supp\vp$ and for some $0<\gamma\leq 1$.
\label{lem-osc-p}
\end{lemm}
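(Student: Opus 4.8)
The plan is to obtain the oscillation estimate directly from the uniform convergence of $u_\e$ to the Lipschitz limit $u$ of Lemma~\ref{lem-thick}, together with its quantitative rate. The point is that for the specific radius $a_\e=(\e a^*_\e/2)^{1/2}$ the parabolic cylinder $\partial B_{a_\e}(m)\times[t_0-a_\e^2,t_0]$ has diameter $O(a_\e)$, which is $o(\e)$, while the convergence rate in Lemma~\ref{lem-thick} turns out to be a genuine positive power of $\e$.

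First I would record that $a_\e=2^{-1/2}\,\e^{(1+\alpha_*)/2}=2^{-1/2}\,\e^{(n-1)/(n-2)}$ is of the form $c_0\e^{\alpha}$ with $\alpha=\tfrac{n-1}{n-2}$, and that $\alpha<\alpha_*=\tfrac{n}{n-2}$; hence we are in the ``thick'' regime and Lemma~\ref{lem-thick} applies. It provides a function $u$ on $Q_T$, Lipschitz with constant $L$ independent of $\e$, and an exponent $\beta>n$ such that
\begin{equation*}
\|u_\e-u\|_{L^\infty(Q_T)}\ \le\ C\,\frac{\e^{\beta}}{a_\e^{\beta-2}}\ =\ C'\,\e^{\,(2n-2-\beta)/(n-2)}.
\end{equation*}
Because $n\ge3$ the interval $(n,2n-2)$ is nonempty, so we may choose $\beta$ in it and set $\gamma_0:=\tfrac{2n-2-\beta}{n-2}\in(0,1)$, giving $\|u_\e-u\|_{L^\infty(Q_T)}\le C'\e^{\gamma_0}$.

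Next, for any $(x_1,t_1),(x_2,t_2)\in\partial B_{a_\e}(m)\times[t_0-a_\e^2,t_0]$ I would estimate
\begin{equation*}
|u_\e(x_1,t_1)-u_\e(x_2,t_2)|\ \le\ 2\,\|u_\e-u\|_{L^\infty(Q_T)}+L\big(|x_1-x_2|+|t_1-t_2|\big)\ \le\ 2C'\e^{\gamma_0}+3L\,a_\e,
\end{equation*}
since $|x_1-x_2|\le2a_\e$ and $|t_1-t_2|\le a_\e^2\le a_\e$ (the temporal bound can instead be quoted from Lemma~\ref{lem-1}). As $a_\e=2^{-1/2}\e^{(n-1)/(n-2)}$ with $\tfrac{n-1}{n-2}>1>\gamma_0$, the term $3La_\e$ is dominated by $\e^{\gamma_0}$, and taking the supremum over the two points yields $\osc_{\partial B_{a_\e}(m)\times[t_0-a_\e^2,t_0]}u_\e\le C\e^{\gamma_0}$, i.e.\ the claim with $\gamma=\gamma_0\in(0,1]$.

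The step I expect to be the crux is the identity in the first display: the rate $\e^{\beta}a_\e^{-(\beta-2)}$ coming from Lemma~\ref{lem-thick} is $o(1)$ for this $a_\e$ only when $\beta<2n-2$, whereas the lemma itself needs $\beta>n$, so the whole argument relies on $(n,2n-2)\ne\emptyset$, i.e.\ on $n\ge3$; for $n=2$ one would restrict to $n\ge3$ or rerun the same scheme with the logarithmic capacitary barriers appropriate to $a^*_\e=e^{-1/\e^2}$. Finally, it is worth noting why we route the estimate through $u$ rather than argue only inside one cell: the discrete Lipschitz bound of Corollary~\ref{cor-d-lip} compares values of $u_\e$ solely at lattice distance $\ge\e$, so it gives no control over the oscillation across a single sphere of radius $a_\e\ll\e$, and it is precisely the global Lipschitz limit that restores that control.
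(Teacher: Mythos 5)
Your argument does not prove the lemma as it is used in the paper, because it rests on a misreading of what $a_{\e}=(\e a^*_{\e}/2)^{1/2}$ is. In Lemma \ref{lem-osc-p} this quantity is \emph{not} the radius of the obstacle's support; it is the radius of an intermediate ``measurement'' sphere, the geometric mean of the cell size $\e$ and the critical hole size $a^*_{\e}$. The hole itself still has the critical radius $a^*_{\e}$: in the paper's proof the rescaled function $v_{\e}(x,t)=u_{\e}(a_{\e}x+m,a_{\e}^2t+t_0)$ is caloric only in the annulus $B_{\e/2a_{\e}}\setminus B_{a^*_{\e}/a_{\e}}$, i.e.\ outside $B_{a^*_{\e}}(m)$ in the original variables. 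Consequently you are not in the ``thick'' regime, and Lemma \ref{lem-thick} is not available: that lemma requires the obstacle radius to satisfy $\alpha<\alpha_*$, and its error bound $\e^{\beta}/a_{\e}^{\beta-2}$ evaluated at the critical radius $a^*_{\e}=\e^{n/(n-2)}$ equals $\e^{(2n-2\beta)/(n-2)}$, which \emph{diverges} for the admissible $\beta>n$. Indeed, in the critical case $u_{\e}$ does not converge uniformly to a Lipschitz function near the holes --- it develops capacitary spikes of height $O(1)$ in $O(a^*_{\e})$-neighborhoods of $T_{a^*_{\e}}$, which is exactly why Theorem \ref{thm-i}(1) only claims uniform convergence off an exceptional set $D_{\delta}$ of small measure. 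If your first display held in the critical regime, that exceptional set would be unnecessary. Since Lemma \ref{lem-osc-p} feeds (through Corollary \ref{corollary-2-10-} and Lemma \ref{lem-osc}) into Lemma \ref{lem-6}, the critical-rate homogenization, a proof valid only for subcritical obstacles misses precisely the case where the lemma is needed.

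The paper's actual mechanism is local and does not route through any limit $u$: after rescaling by $a_{\e}$, one takes a caloric replacement $w_{\e}$ of $v_{\e}$ in the full cylinder $B_{\e/2a_{\e}}\times[0,\e^2/a_{\e}^2]$ and iterates the parabolic oscillation-decay lemma roughly $\log_4(\e/a_{\e})$ times to get $\osc w_{\e}=O(\e^{\gamma})$ on the unit sphere, with $\gamma$ determined by the Harnack constant; the error $v_{\e}-w_{\e}$ is caloric in the annulus, vanishes on the outer boundary, and is controlled by a barrier decaying like $|x|^{2-n}$, which is negligible at $|x|=1$ because the rescaled hole radius $a^*_{\e}/a_{\e}=(2a^*_{\e}/\e)^{1/2}\to 0$. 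Your closing remark about why the discrete Lipschitz estimate of Corollary \ref{cor-d-lip} cannot control oscillation within a single cell is correct, but the remedy is this interior caloric regularity in the annulus between the hole and the cell boundary, not global uniform convergence.
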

\begin{proof}
If we make a scale $v_{\e}(x,t)=u_{\e}(a_{\e}x+m,a_{\e}^2 t+t_0 )$,
we have a bounded  caloric function in a large domain
$\left\{B_{\e/2a_{\e}}(0)\backslash
B_{a_{\epsilon}^{\ast}/a_{\e}}(0)\right\}\times[0,\e^2/a_{\e}^2]$
such that $v_{\epsilon}(x,t)\geq
\varphi_{\epsilon}(a_{\epsilon}x+m,t)\geq
\varphi(m,t)-Ca_{\epsilon}$ on
$B_{a^{\ast}_{\epsilon}/a_{\epsilon}}(0)\times[0,\epsilon^2/a_{\epsilon}^2]$.
We may expect almost Louville theorem saying that the oscillation on
the uniformly bounded set is of order $o(\e^{\gamma})$. Let $w_{\e}$
be a caloric replacement of $v_{\e}$ in
$B_{\e/2a_{\e}}\times[0,\e^2/a_{\e}^2]$. Then
$\osc_{B_1\times[\epsilon^2/a_{\epsilon}^2-1,\epsilon^2/a_{\epsilon}^2]}w_{\e}=o(\e^{\gamma})$
by applying the oscillation lemma,\cite{LB}, of the caloric
functions inductively:
\begin{equation*}
\osc_{_{{B_R(x_0)\times[t_0-R^2,t_0]}}}w_{\epsilon}<\delta_0\osc_{_{{B_{4R}(x_0)\times[t_0-(4R)^2,t_0]}}}w_{\epsilon}
\end{equation*}
for some $0<\delta_0<1$ and $\gamma \approx
\log_{\epsilon}\left(\frac{a_{\epsilon}}{\epsilon}\right)^{-\log_4\delta_0}=\frac{\log_4\delta_0^{-1}}{n-2}$.
It is noticeable that $\delta_0=1-\frac{1}{C_1}$ for $C_1>0$ which
comes from the Harnack inequality,
\begin{equation*}
\sup_{_{B_{R/2}(x_0)\times[t_0-\frac{5R^2}{4},t_0-R^2]}}w\leq
C_1\inf_{_{B_R(x_0)\times[t_0-R^2,t_0]}}w
\end{equation*}
for a positive caloric function $w$. Then the error
$v=v_{\e}-w_{\e}$ is also a caloric in
$\big\{B_{\e/2a_{\e}}(0)\backslash
B_{a_{\epsilon}^{\ast}/a_{\e}}(0)\big\}\times[0,\e^2/a_{\e}^2]$ and
$v=0$ on $\big\{\partial
B_{\e/2a_{\e}}(0)\big\}\times[0,\e^2/a_{\e}^2]$ and $B_{\epsilon
\backslash a_{\epsilon}}(0)\times\{0\}$, we also have
\begin{equation*}
0\leq v\leq 2\sup_{Q_T}\vp
\end{equation*}
in $\big\{B_{\e/2a_{\e}}(0)\backslash
B_{a^{\ast}_{\epsilon}/a_{\e}}(0)\big\}\times[0,\e^2/a_{\e}^2]$.
Since the harmonic function can be considered a stationary caloric
function, we have
\begin{equation*}
0\leq v(x,t)\leq
(\sup_{\Omega}\vp)\frac{(a_{\e})^{n-2}}{r^{n-2}},\qquad r=|x|
\end{equation*}
which means $\osc_{\partial B_1\times[0,1]}v=O(\e^{n-1})$. Therefore
we know
\begin{equation*}
\osc_{_{{\partial
B_{a_{\e}}(m)\times[t_0-a^2_{\epsilon},t_0]}}}u_{\e}=\osc_{_{{\partial
B_1(0)\times[\left(\epsilon/a_{\epsilon}\right)^2-1,\left(\epsilon/a_{\epsilon}\right)^2]}}}v_{\e}=o(\e^{\gamma}).
\end{equation*}
\end{proof}

By the Lemma \ref{lem-series-for-corollary-2-10----1}, \ref{lem-1} and \ref{lem-series-for-corollary-2-10----3}, we get the following corollary.
\begin{corollary}\label{corollary-2-10-}
we have
\begin{equation*}
|u_{\e}(x,t_1)-u_{\e}(y,t_2)|\leq
C_1|x-y|+C_2|t_1-t_2|^{\frac{1}{2}}+o(\e^{\gamma})
\end{equation*}
 for $(x,t_1),(y,t_2)\in \big(\cup_{m\in\e\Z} \partial
B_{a_{\e}}(m)\cap\Omega\big)\times (0,T])$.
\end{corollary}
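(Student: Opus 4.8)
The plan is to assemble the three preceding estimates along a short chain of intermediate points: the uniform time‑regularity of Lemma~\ref{lem-1} absorbs the discrepancy $t_1\neq t_2$, the discrete Lipschitz bound of Corollary~\ref{cor-d-lip} absorbs a translation by a lattice vector, and the sphere‑oscillation bound of Lemma~\ref{lem-series-for-corollary-2-10----3} absorbs the motion of the endpoints around their small spheres. Fix $x\in\partial B_{a_\e}(m)$ and $y\in\partial B_{a_\e}(m')$ with $m,m'\in\e\Z^n$, the case $m=m'$ being immediate from Lemma~\ref{lem-series-for-corollary-2-10----3} alone.

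First I would reduce to a common time. By Lemma~\ref{lem-1}, $u_\e$ is Lipschitz in $t$ uniformly in $\e$, so $|u_\e(y,t_1)-u_\e(y,t_2)|\le C|t_1-t_2|\le CT^{1/2}|t_1-t_2|^{1/2}$, which already yields the term $C_2|t_1-t_2|^{1/2}$ (when $|t_1-t_2|\le a_\e^2$ one may instead read this off directly from the parabolic oscillation estimate, and near $t=0$ one replaces the backward cylinder $[t-a_\e^2,t]$ by a forward one). It then remains to estimate $|u_\e(x,t_1)-u_\e(y,t_1)|$ at the single time $t_1$.

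Here I would introduce the auxiliary point $y':=m'+(x-m)=x-(m-m')$. Since $|y'-m'|=|x-m|=a_\e$, the point $y'$ lies on $\partial B_{a_\e}(m')$, while $x-y'=m-m'\in\e\Z^n$, so Corollary~\ref{cor-d-lip} gives $|u_\e(x,t_1)-u_\e(y',t_1)|\le C|m-m'|$. Since $|y-y'|\le|y-m'|+|x-m|=2a_\e$ with $y,y'$ both on $\partial B_{a_\e}(m')$, Lemma~\ref{lem-series-for-corollary-2-10----3} gives $|u_\e(y',t_1)-u_\e(y,t_1)|=o(\e^\gamma)$. Combining these, and using $|m-m'|\le|m-x|+|x-y|+|y-m'|=|x-y|+2a_\e$, we obtain $|u_\e(x,t_1)-u_\e(y,t_1)|\le C|x-y|+2Ca_\e+o(\e^\gamma)$. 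Finally, for the prescribed radius $a_\e=(\e a^*_\e/2)^{1/2}$ one has $a_\e=2^{-1/2}\e^{(1+\alpha_*)/2}$ with $(1+\alpha_*)/2>1\ge\gamma$ (and $a_\e$ super‑exponentially small when $n=2$), hence $a_\e=o(\e^\gamma)$ and the $a_\e$–terms are absorbed; adding the time term gives the claim with $C_1=C$ and $C_2=CT^{1/2}$. If $m$ or $m'$ fails to lie in $\supp\vp$, the corresponding hole carries the zero obstacle, $u_\e$ is caloric (or a nonnegative supersolution) in a fixed neighbourhood of it, and the sphere‑oscillation bound holds a fortiori.

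The argument is largely bookkeeping; the point demanding attention is the separation of scales. One must use that the error in Lemma~\ref{lem-series-for-corollary-2-10----3} is a genuine $o(\e^\gamma)$ and that $a_\e$ is itself $o(\e^\gamma)$ for every admissible $\gamma\in(0,1]$, so that none of the $O(a_\e)$ discrepancies — the one incurred when sliding $y$ to $y'$ along $\partial B_{a_\e}(m')$, and the one incurred when passing from $|m-m'|$ to $|x-y|$ — survives in the limit. A minor secondary point is the adjustment of the parabolic cylinder for times within $a_\e^2$ of the initial slice, where one either uses a forward cylinder or couples with the regularity of the initial data.
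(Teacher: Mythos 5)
Your proposal is correct and is exactly the argument the paper intends: the paper offers no written proof beyond citing Lemma \ref{lem-series-for-corollary-2-10----1} (via Corollary \ref{cor-d-lip}), Lemma \ref{lem-1}, and Lemma \ref{lem-series-for-corollary-2-10----3}, and your chain through the lattice-translated point $y'=x-(m-m')$ together with the observation that $a_{\e}=o(\e^{\gamma})$ is precisely the assembly those citations presuppose.
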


\begin{lemm}
\item Set  $a_{\e}=(\frac{\epsilon a^*_{\e}}{2})^{1/2}$. Then
\begin{equation*}
\osc_{\{B_{\e}(m)\backslash
B_{a_{\e}}(m)\}\times[t_0-a^2_{\epsilon},t_0]}u_{\e}=o(\eta(\epsilon))
\end{equation*}
for $m\in \e\Z^n\cap\supp\vp$ and for some function $\eta(\epsilon)$
satisfying
\begin{equation*}
\eta(\epsilon) \to 0 \qquad \textrm{as} \quad \epsilon \to 0.
\end{equation*}
\label{lem-osc}
\end{lemm}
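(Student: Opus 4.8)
The plan is to use that, once one stays at distance at least $a_{\e}$ from the centre of a hole, $u_{\e}$ is a bounded caloric function, and to combine this with the oscillation control on the mesoscale spheres already obtained in Lemma~\ref{lem-osc-p} and Corollary~\ref{corollary-2-10-}. Concretely, I would first rescale at the mesoscale $a_{\e}$: put $v_{\e}(x,t)=u_{\e}(a_{\e}x+m,\,a_{\e}^{2}t+t_{0})$, so that the cylinder in the statement becomes the annular cylinder $\{B_{\e/a_{\e}}(0)\bs B_{1}(0)\}\times[-1,0]$, whose outer radius $R_{\e}:=\e/a_{\e}=(2\e/a^{\ast}_{\e})^{1/2}$ tends to $\infty$. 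Since $a^{\ast}_{\e}\ll a_{\e}\ll\e$, the hole centred at $m$ has rescaled radius $a^{\ast}_{\e}/a_{\e}\to0$ and lies well inside the removed ball $B_{1}(0)$, while the holes of the neighbouring cells have rescaled radius tending to $0$ and sit near $\partial B_{R_{\e}}(0)$; hence $v_{\e}$ is a bounded caloric function on this annular cylinder away from the small neighbourhoods of the holes, and Lemma~\ref{lem-osc-p} gives $\osc v_{\e}=O(\e^{\gamma})$ on the inner lateral boundary $\partial B_{1}(0)\times[-1,0]$, i.e. $v_{\e}=c_{\e}+O(\e^{\gamma})$ there with, say, $c_{\e}=u_{\e}(m+a_{\e}e_{1},t_{0})$.

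The obstacle is that on the outer and bottom parts of the parabolic boundary only the crude bound $|v_{\e}|\le\sup_{Q_{T}}|\vp|$ is available, so the maximum principle by itself does not close the estimate. To get around this I would return to the original scale and apply the maximum principle on the genuinely caloric region $\{B_{\e}(m)\bs\bigcup_{m'}B_{a_{\e}}(m')\}\times[t_{0}-a_{\e}^{2},t_{0}]$, whose lateral parabolic boundary lies in the union of the mesoscale spheres $\partial B_{a_{\e}}(m')$. On that union Corollary~\ref{corollary-2-10-} controls $u_{\e}$: for times in $[t_{0}-a_{\e}^{2},t_{0}]$ and points at mutual distance at most $2\e$ --- all that can occur inside $B_{\e}(m)$, which overlaps only boundedly many such spheres --- it gives $|u_{\e}(x,t_{1})-u_{\e}(y,t_{2})|\le C\e+o(\e^{\gamma})$, using $|t_{1}-t_{2}|^{1/2}\le a_{\e}$.

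The bottom slice is handled as in the proof of Lemma~\ref{lem-osc-p}: one subtracts from $u_{\e}$ the caloric function in the full cylinder that agrees with $u_{\e}$ on the outer and bottom faces, so that the error vanishes there and comparison with the stationary capacitary potential $|x-m|^{2-n}$ leaves only the small inner sphere contribution; equivalently, comparing $u_{\e}$ with the lattice capacitary barrier of the proof of Lemma~\ref{lem-thick} and using the time regularity $\|D_{t}u_{\e}\|\le C$ of Lemma~\ref{lem-1} shows that the spike produced by each hole has already decayed to size $O(\e)$ by the time one is at radius $a_{\e}$ from its centre. The maximum principle then bounds the oscillation of $u_{\e}$ on the whole caloric region by that of its parabolic boundary data, so that
\begin{equation*}
\osc_{\{B_{\e}(m)\bs B_{a_{\e}}(m)\}\times[t_{0}-a_{\e}^{2},t_{0}]}u_{\e}\le C\e+o(\e^{\gamma}),
\end{equation*}
and the lemma follows with, for instance, $\eta(\e)=\e^{\gamma}$ (any $\eta(\e)\to0$ dominating $\e+\e^{\gamma}$ works, and for $n=2$ the logarithmic capacity replaces $|x-m|^{2-n}$). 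The step I expect to demand the most care is precisely this treatment of the outer boundary of the cell: a direct maximum principle argument there is circular, and it is the uniform Lipschitz control of $u_{\e}$ over the whole family of mesoscale spheres from Corollary~\ref{corollary-2-10-}, together with the fact that $B_{\e}(m)$ meets only finitely many of them, that makes the argument go through.
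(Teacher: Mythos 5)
Your reduction to the mesoscale spheres is the right starting point, and you correctly isolate the real difficulty (the outer part of the cell boundary, where no a priori control of $u_{\e}$ is available). But your proposed resolution rests on a false geometric claim: the lateral parabolic boundary of $\{B_{\e}(m)\setminus\bigcup_{m'}B_{a_{\e}}(m')\}\times[t_{0}-a_{\e}^{2},t_{0}]$ does \emph{not} lie in $\bigcup_{m'}\partial B_{a_{\e}}(m')$. It contains essentially all of $\partial B_{\e}(m)\times[t_{0}-a_{\e}^{2},t_{0}]$, and since the mesoscale balls have radius $a_{\e}=(\e a^{\ast}_{\e}/2)^{1/2}=o(\e)$ and are centred on the lattice $\e\Z^{n}$, a point such as $m+\tfrac{\e}{\sqrt{2}}(e_{1}+e_{2})\in\partial B_{\e}(m)$ is at distance comparable to $\e$ from every mesoscale sphere. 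The tiny spheres $\partial B_{a_{\e}}(m')$ cannot enclose any region of diameter $\e$, so no choice of caloric region avoids an outer lateral boundary on which Corollary \ref{corollary-2-10-} says nothing; your maximum principle step is therefore still circular --- exactly the trap you flagged at the end.

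The paper escapes this differently. Having shown, via Lemma \ref{lem-osc-p} and Corollary \ref{corollary-2-10-}, that $u_{\e}$ agrees with a fixed continuous function $\tilde u$ up to $o(\eta(\e))$ on the union of mesoscale spheres, it regards $(\tilde u-C\eta(\e))\chi_{T_{a_{\e}}}$ as a new obstacle whose balls have the \emph{subcritical} exponent $\alpha=\frac{n-1}{n-2}<\alpha_{\ast}$, and invokes Lemma \ref{lem-3}. The barrier constructed there, $-\frac{c_{0}k}{4n}|x-x_{0}|^{2}+\cdots+h_{\e}(x)+M(t-t_{0})$, handles the unknown outer values automatically: the concave paraboloid makes it negative, hence below $u_{\e}$, on $\partial B_{\rho_{0}}(x_{0})$ of a \emph{macroscopic} ball, while the periodic spikes $h_{\e}$ contribute only $C\e^{\beta}/a_{\e}^{\beta-2}=C\e^{(2(n-1)-\beta)/(n-2)}\to 0$ for $n<\beta<2(n-1)$. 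That comparison pins $u_{\e}$ to the nearly constant sphere values throughout each cell. You do mention the lattice capacitary barrier of Lemma \ref{lem-thick}, but only to control the spikes near the holes; it is in fact the device that must replace your maximum principle step. Rewriting the second half of your argument around Lemma \ref{lem-3} applied to the obstacle $(\tilde u-C\eta)\chi_{T_{a_{\e}}}$, rather than around a boundary-value problem between the mesoscale spheres, would close the gap.
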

\begin{proof}
The Lemma \ref{lem-osc-p} tells us that $u_{\e}$ is almost
constant on a set $\{\partial
B_{a_{\e}}(m)\}\times[t_0-a^2_{\epsilon},t_0]$ whose radius is
greater than a critical rate $a^*_{\e}$. Let
\begin{equation*}
\tilde{u}_{\epsilon}(x,t)=\sup_{\partial
B_{a_{\epsilon}}\times\{t\}}u_{\epsilon}.
\end{equation*}
for $(x,t)\in \{Q^{\epsilon}_m\cap\Omega\} \times(0,T]$. Then, by the corollary \ref{corollary-2-10-}, we have
\begin{equation*}
|\tilde{u}_{\e}(x,t_1)-\tilde{u}_{\e}(y,t_2)|\leq
C_1|x-y|+C_2|t_1-t_2|^{\frac{1}{2}}+o(\e^{\gamma})
\end{equation*}
and
\begin{equation*}
|\tilde{u}_{\epsilon}(z,t)-u_{\epsilon}(z,t)|\leq
C{\epsilon}^{\gamma}
\end{equation*}
for all $(x,t_1), (y,t_2) \in \Omega\times(0,T]$, $(z,t) \in
\{\cup_{m\in \epsilon \mathbb{Z}^n}\partial B_{a_{\epsilon}}(m)\cap
\Omega\}\times(0,T]$ and for some $C<\infty$. Therefore there is a
limit $\tilde{u}(x,t)$ of $\tilde{u}_{\epsilon}(x,t)$ such that
\begin{equation*}
\sup_{\{\cup_{m \in \epsilon \mathbb{Z}^n}\partial
B_{a_{\epsilon}}(m)\cap
\Omega\}\times[0,\infty)}|u_{\epsilon}(x,t)-\tilde{u}(x,t)|=o(\eta(\epsilon)).
\end{equation*}
for some function $\eta(\epsilon)$ which goes to zero as $\epsilon
\to 0$. This estimate says that the values of
$\big(\tilde{u}(x,t)-C\eta(\epsilon)\big)\chi_{_{T_{a_{\epsilon}}}}$
plays as an obstacle below $u_{\epsilon}$ with a slow decay rate,
$a_{\epsilon}>> a^{\ast}_{\epsilon}$, in Lemma \ref{lem-3}, which
will gives us the conclusion.
\end{proof}


\subsection{Homogenized Equations}\label{subsec-homogenized} In this section, we are going to find homogenized equation satisfied by the limit $u$ of $u_{\e}$
through viscosity methods.

\begin{lemm}\label{lem-6}
Let $a^{\ast}_{\epsilon}=\epsilon^{{\alpha}_{\ast}}$ for
${\alpha}_{\ast}=\frac{n}{n-2}$ for $n \geq 3$ and
${a}^{\ast}_{\epsilon}=e^{-\frac{1}{\epsilon^2}}$ for $n=2$. Then for
$c_0{a}_{\epsilon}^{\ast} \leq {a}_{\epsilon} \leq
C_0{a}_{\epsilon}^{\ast}$, $u$ is a viscosity solution of
\begin{equation}\label{eq-homogenized}
\begin{cases}
\La u +\kappa_{B_{r_0}}(\varphi-u)_+-u_t=0 \qquad \qquad \mbox{in
$Q_T$}\\
\qquad u=0 \qquad \qquad \qquad \quad \qquad \qquad \mbox{on $\partial_lQ_T$}\\
\qquad u=g(x) \qquad \qquad \qquad \qquad \quad \mbox{in $\Omega
\times \{t=0\}$}
\end{cases}
\end{equation}
where $\kappa_{B_{r_0}}$ is the capacity of $B_{r_0}$ if
$r_0=\lim_{\epsilon \to
0}\frac{{a}_{\epsilon}}{{a}^{\ast}_{\epsilon}}$ exists.
\end{lemm}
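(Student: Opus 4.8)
The plan is to show that the limit $u$ is simultaneously a viscosity sub- and super-solution of the first equation in \eqref{eq-homogenized}; the lateral and initial conditions are inherited from those of $u_\e$ together with the boundary estimates in Lemma \ref{lem-series-for-corollary-2-10----1} and Lemma \ref{lem-1}. Throughout I would use that, by Theorem \ref{thm-i}, $u_\e\ra u$ uniformly on a set $D_\delta$ with $|Q_T\bs D_\delta|<\delta$, and that, by Lemma \ref{lem-osc-p}, Corollary \ref{corollary-2-10-} and Lemma \ref{lem-osc}, $u_\e$ is almost constant on each punctured cell $\{Q^\e_m\cap\Omega\}\bs B_{a_\e}(m)$, so that the limit of $u_\e$ at a generic interior point $(x_0,t_0)$ is well defined. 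Since $u_\e$ is a supersolution of $H[\,\cdot\,]\le0$ on all of $Q_T$, passing to the limit immediately gives $H[u]\le0$ in the viscosity sense; and on any region where $\vp(x_0,t_0)<u(x_0,t_0)$ the obstacle is inactive for small $\e$, $u_\e$ is caloric near $(x_0,t_0)$, and stability of viscosity solutions yields $H[u]=0$ there. So the real content is to produce the exact identity $H[u]=-\kappa_{B_{r_0}}(\vp-u)$ on the active set $\{\vp>u\}$.

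For that I would fix a smooth $P$ touching $u$, say from below, at an interior point $(x_0,t_0)$ with $\vp(x_0,t_0)>u(x_0,t_0)$, strictly on the parabolic boundary of a small cylinder $C_\rho=B_\rho(x_0)\times(t_0-\rho^2,t_0]$, and argue by contradiction assuming $H[P](x_0,t_0)+\kappa_{B_{r_0}}c_0=\theta>0$, where $c_0=\vp(x_0,t_0)-u(x_0,t_0)$. The key object is the corrector $w_\e$ of \eqref{eq-cases-definition-of-corrector-cite-by-CL-5} for the present holes $T_{a_\e}$: since $a_\e\sim r_0a_\e^\ast$, one reads off the averaged source $\mathrm{cap}(T_{a_\e})/\e^n\to\mathrm{cap}(B_{r_0})=\kappa_{B_{r_0}}$, and Lemma \ref{lem-5} gives $\La w_\e=\kappa_{B_{r_0}}$ in $\R^n_{a_\e}$, $w_\e=1$ on $T_{a_\e}$, $0\le w_\e\le1$, and $w_\e\ra0$ off the holes. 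I would then set, with $\rho\ll\nu\ll\theta$,
\begin{equation*}
\psi_\e(x,t)=P(x,t)+\nu+(c_0-2\nu)\,w_\e(x),
\end{equation*}
so that $H[\psi_\e]=H[P]+(c_0-2\nu)\kappa_{B_{r_0}}\ge\theta/2>0$ on $C_\rho\bs T_{a_\e}$, i.e. $\psi_\e$ is a strict subsolution of the heat operator there, while on the holes inside $C_\rho$ one has $\psi_\e=P+c_0-\nu\le\vp=\vp_\e$ (using $c_0=\vp(x_0,t_0)-P(x_0,t_0)$ and $\rho\ll\nu$). On $\partial_pC_\rho$ I would check $\psi_\e\le u_\e$: away from the holes because $P\le u-\delta_\rho$ there, $w_\e\ra0$ and $u_\e\ra u$; inside the $\e$-neighborhoods of holes meeting $\partial_pC_\rho$ because there both $\psi_\e$ and $u_\e$ are, to leading order, of the form ``constant plus amplitude times a fundamental-solution profile'', and the amplitude $c_0-2\nu$ together with the profile of $w_\e$ from \cite{CL} and the description of $u_\e$ near a hole in Lemma \ref{lem-osc-p} keep $\psi_\e$ below $u_\e$.

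Comparing $\psi_\e$ with the penalized solutions $u_{\e,\delta}$ of \eqref{eq-li-penalty}, for which $H[u_{\e,\delta}]=\beta_\delta(\cdot)\le0$ while $H[\psi_\e]>0$ off the holes, a parabolic maximum principle argument (as in the proof of Lemma \ref{lem-series-for-corollary-2-10----1}) shows that $\psi_\e-u_{\e,\delta}$ has no interior maximum in $C_\rho$, hence attains its maximum on $\partial_pC_\rho\cup(T_{a_\e}\cap C_\rho)$, where it is $\le o(1)$; letting $\delta\ra0$ gives $\psi_\e\le u_\e$ in $C_\rho$, and evaluating at $(x_0,t_0)$ and then letting $\e\ra0$ (so that $w_\e(x_0)\ra0$ and $u_\e(x_0,t_0)\ra u(x_0,t_0)$) yields $u(x_0,t_0)+\nu\le u(x_0,t_0)$, a contradiction. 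Hence $H[u]+\kappa_{B_{r_0}}(\vp-u)_+\le0$ in the viscosity sense. The reverse inequality I would obtain in the mirror way: a test function touching $u$ from above, the barrier $\psi_\e=P-\nu+(c_0+2\nu)w_\e$ (a strict supersolution off the holes and $\ge\vp_\e$ on them), and the global comparison coming from the minimality of $u_\e$ as the least supersolution above $\vp_\e$ (comparing $u_\e$ with $\min(\psi_\e,u_\e)$, which is again an admissible supersolution above $\vp_\e$ with the correct data); the borderline case $\vp(x_0,t_0)=u(x_0,t_0)$ then follows from continuity of $(\vp-u)_+$ and the equation already established off $\{\vp=u\}$. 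The hard part is the comparison of $\psi_\e$ with $u_\e$ on the part of $\partial_pC_\rho$ lying within $O(\e)$ of a hole: this is where one needs the sharp fundamental-solution shape of the corrector from \cite{CL} and the fine behavior of $u_\e$ near a hole from Lemma \ref{lem-osc-p}, and it is exactly here that the bookkeeping $a_\e\sim r_0a_\e^\ast$ forces the homogenized coefficient to be the capacity $\kappa_{B_{r_0}}$ of $B_{r_0}$.
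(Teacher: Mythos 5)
Your strategy is the same as the paper's: add the corrector $w_\e$, with amplitude essentially $(\vp-u)(x_0,t_0)$, to a test polynomial, use $\La w_\e=\kappa_{B_{r_0}}$, $w_\e=1$ on $T_{a_\e}$ and $w_\e\ra 0$ elsewhere, and close the argument either by direct comparison (your supersolution half) or by the minimality of $u_\e$ as the least supersolution above $\vp_\e$ (your subsolution half). Your ``mirror'' argument with $\min(\psi_\e,u_\e)$ is exactly the proof the paper gives for the subsolution property; the direct-comparison half is the part the paper delegates to Lemma 4.1 of \cite{CL}, and your outline of it is the right idea, including the correct identification of the near-hole portion of $\partial_pC_\rho$ as the place where the fine behavior of $u_\e$ (Lemma \ref{lem-osc-p}) must be matched against the fundamental-solution profile of $w_\e$ — though that step is only asserted, not carried out.

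There is, however, a concrete gap in your parameter bookkeeping. You impose $\rho\ll\nu$ so that the obstacle inequality $P+c_0-\nu\le\vp$ holds on the holes in $C_\rho$; this indeed requires $\nu\ge(\mathrm{Lip}(P)+\mathrm{Lip}(\vp))\,\rho$, since $\nabla P(x_0,t_0)$ and $\nabla\vp(x_0,t_0)$ need not agree. But the boundary ordering $\psi_\e\le u_\e$ on $\partial_pC_\rho$ away from the holes requires $\nu\le\delta_\rho:=\min_{\partial_pC_\rho}(u-P)$, and for a test polynomial touching $u$ one only gets $\delta_\rho=O(\rho^2)$ after the standard strictification (and $\delta_\rho=o(\rho)$ whenever $P$ matches the gradient of $u$ at the touching point). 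The two requirements $C\rho\le\nu\le\delta_\rho$ are therefore incompatible for small $\rho$. This is precisely what the paper's auxiliary polynomial $Q$ is designed to repair: $Q$ carries a definite drop $\delta_0$ at the center (so the final contradiction is by a fixed amount rather than by $\nu$) and has the boundary crossing $Q>P$ (resp.\ $Q<P$) built into its Hessian, while a separate parameter $\e_0$, constrained only by $\e_0\ge C\eta$ and $\kappa\e_0<\mu_0/2$, supplies the margin in the obstacle inequality; the price $|H[Q]-H[P]|\sim\delta_0/\eta^2$ is absorbed by taking $\delta_0\ll\mu_0\eta^2$. You need this second polynomial (or an equivalent decoupling of the center-drop, boundary-gap, and obstacle-margin parameters) for your barriers to satisfy the differential inequality, the obstacle inequality, and the boundary ordering simultaneously.
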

\begin{proof}
First, we are going to show that $u$ is a sub-solution. If not,
there is a quadratic polynomial
\begin{equation*}
P(x,t)=-d(t-t^0)+\frac{1}{2}a_{ij}(x_i-x_i^0)(x_j-x_j^0)+b_i(x_i-x_i^0)+c
\end{equation*}
touching $u$ from above at $(x^0,t^0)$ and
$$H[P]+\kappa (\varphi-P)_+< -\mu_0 <0.$$
In a small neighborhood of $(x^0,t^0)$,
$B_{\eta}(x^0)\times[t^0-\eta^2,t^0]$, there is another quadratic
polynomial $Q(x,t)$ such that
\begin{equation*}
\begin{cases}
H[P]<H[Q] \qquad \qquad \qquad \qquad \mbox{in
$B_{\eta}(x^0)\times[t^0-\eta^2,t^0]$}\\
Q(x^0,t^0)<P(x^0,t^0)-\delta_0 \\
Q(x,t)>P(x,t) \qquad \mbox{on $\partial B_{\eta}(x^0)\times
[t^0-\eta^2,t^0]$ and $B_{\eta}(x^0)\times \{t^0-\eta^2\}$}.
\end{cases}
\end{equation*}
In addition, we can choose an appropriate number $\epsilon_0>0$ so
that $Q$ satisfies
\begin{equation*}
H[Q]+\kappa(\vp(x^0,t^0)-u(x^0,t^0)+\e_0):=H[Q]+\kappa\xi_0<-\frac{\mu_0}{2}<0
\end{equation*}
and 
\begin{equation*}
|Q(x,t)- Q(x^0,t^0)|+|\vp(x,t)-\vp(x^0,t^0)|<\e_0
\end{equation*} 
in $B_{\eta}(x^0,t^0)\times[t^0-\eta^2,t^0]$. Let us consider
$$Q_{\e}(x,t)=Q(x,t)+w_{\e}(x)\xi_0.$$
Then we have
\begin{equation*}
H[ Q_{\e}(x,t)]<-\frac{\mu_0}{2}<0
\end{equation*}
and
\begin{equation*}
\begin{aligned}
Q_{\e}(x,t)&=Q(x,t)+(\vp(x^0,t^0)-u(x^0,t^0)+\e_0)\\
&>Q(x,t)+(\vp(x^0,t^0)-Q(x^0,t^0)+\e_0)\\
&>\vp(x,t)
\end{aligned}
\end{equation*}
on $\{T_{a_{\epsilon}}\cap B_{\eta}(x^0)\}\times[t_0-\eta^2,t_0]$. Hence, by the maximum principle, $Q_{\e}(x,t)\geq \vp_{\e}(x,t)$ in $B_{\eta}(x^0)\times[t^0-\eta^2,t^0]$.\\
\indent Now we define the function
\begin{equation*}
v_{\epsilon}=\begin{cases}
             \min(u_{\epsilon},Q_{\epsilon}) \qquad x\in B_{\eta}(x^0)\\
              u_{\epsilon}\qquad \qquad \qquad  x\in \Omega\bs  B_{\eta}(x^0).
           \end{cases}
\end{equation*}
Since $\left(\frac{\epsilon
    a^{\ast}_{\epsilon}}{2}\right)^{\frac{1}{2}}=o(\epsilon)$ as
$\epsilon \to 0$, by Lemma \ref{lem-osc}, $u_{\epsilon}$ converges
uniformly  to $u$ in $\Omega$. Hence, for sufficiently small $\epsilon>0$, $Q_{\e}> u_{\e}$ on $\partial B_{\eta}(x^0)\times[t_0-\eta^2,t_0]$. Thus the function $v_{\epsilon}$ is well-defined and will be a viscosity
super-solution of (\ref{eq-homogenized}). Since $u_{\e}$ is the
smallest viscosity super-solution of (\ref{eq-homogenized}),
\begin{equation*}
u_{\e} \leq v_{\e} \leq Q_{\e}.
\end{equation*}
Letting $\e \to 0$, we have $u(x^0,t^0) \leq
Q(x^0,t^0)<P(x^0,t^0)=u(x^0,t^0)$ which is a contradiction.
By an argument similar to the proof of Lemma 4.1 in \cite{CL}, we can show that $u$ is also a viscosity super-solution of
(\ref{eq-homogenized}).
\end{proof}
\begin{lemm}
\item When $a_{\e}=0(\e^{\alpha})$ for a $ \alpha>\alpha_*$, the limit $u$ is a viscosity
      solution of
     \begin{equation*}
     \begin{cases}
     H[u]=0\qquad \qquad \qquad \textrm{in $Q_T$}\\
      u(x,t)=0\qquad \qquad \qquad \textrm{on $\partial_l Q_T$}\\
      u(x,0)=g\qquad \qquad \qquad \textrm{on $\Omega\times\{0\}$}
     \end{cases}
     \end{equation*}
     \label{lem-ho-thin}
\end{lemm}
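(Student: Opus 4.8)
The plan is to sandwich $u_\e$ between two functions that both converge to the solution $u_0$ of the limiting (unperforated) heat problem, and then to identify $u$ with $u_0$ via the convergence already recorded in Theorem \ref{thm-i}. Let $u_0$ be the viscosity solution of $H[u_0]=0$ in $Q_T$, $u_0=0$ on $\partial_lQ_T$, $u_0(\cdot,0)=g$. Since each $u_\e$ is a viscosity supersolution of $H[\cdot]\le 0$ with the same boundary and initial data, the parabolic comparison principle gives $u_0\le u_\e$ in $Q_T$. For the opposite bound I would use the minimality of $u_\e$: it suffices to produce a viscosity supersolution $v_\e$ of the full obstacle problem (\ref{eq-main}) that lies below $u_0+o(1)$. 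I would take $v_\e:=u_0+c\,\zeta_\e$, where $c:=1+\sup_{Q_T}|\vp|$ and $\zeta_\e$ is the harmonic capacitary potential of the holes, i.e.\ $\La\zeta_\e=0$ in $\Omega\bs T_{a_\e}$, $\zeta_\e=1$ on $T_{a_\e}$ and $\zeta_\e=0$ on $\partial\Omega$, so that $0\le\zeta_\e\le 1$ and $\La\zeta_\e\le 0$ in $\Omega$ in the distributional (hence viscosity) sense.

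One checks that $v_\e$ is admissible for (\ref{eq-main}): since $\zeta_\e$ is time independent, $H[v_\e]=H[u_0]+c\,\La\zeta_\e=c\,\La\zeta_\e\le 0$; on $T_{a_\e}$ one has $v_\e=u_0+c\ge u_0+\sup_{Q_T}|\vp|\ge\vp$, and off $T_{a_\e}$ one has $v_\e\ge u_0\ge 0=\vp_\e$ (here I use $u_0\ge 0$, which holds once $g\ge 0$, a positivity forced by compatibility of the obstacle data; if $g$ is merely $\ge\vp(\cdot,0)$ only a harmless modification is needed); and $v_\e=0$ on $\partial_lQ_T$ with $v_\e(\cdot,0)=g+c\zeta_\e\ge g$. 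Thus $v_\e$ is a viscosity supersolution of (\ref{eq-main}), and since $u_\e$ is the least such supersolution, $u_\e\le v_\e=u_0+c\,\zeta_\e$. Altogether $u_0\le u_\e\le u_0+c\,\zeta_\e$ in $Q_T$.

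The one place where the hypothesis $a_\e=o(\e^\alpha)$ with $\alpha>\alpha_*=\tfrac{n}{n-2}$ is genuinely used — and the main obstacle — is in showing $\zeta_\e\to 0$. The content is that in this subcritical regime the holes $T_{a_\e}\cap\Omega$ have vanishing harmonic capacity in $\Omega$: comparing $\zeta_\e$ in each cube $Q^\e_m$ with the scaled fundamental-solution barrier $x\mapsto\bigl(a_\e/|x-m|\bigr)^{n-2}$, as in the barrier construction behind Lemma \ref{lem-5} and in \cite{CL}, yields $\zeta_\e(x)\le C\bigl(a_\e/\mathrm{dist}(x,T_{a_\e})\bigr)^{n-2}$ away from the holes, while the total capacity is comparable to $\e^{-n}a_\e^{n-2}$, which tends to $0$ precisely because $(n-2)\alpha>n$. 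Since $0\le\zeta_\e\le 1$ and $|\Omega|<\infty$, this forces $\zeta_\e\to 0$ in $L^p(\Omega)$ for every finite $p$, hence $u_\e\to u_0$ in $L^p(Q_T)$. By Theorem \ref{thm-i}(1) we also have $u_\e\wl u$ in $Q_T$ with respect to the $L^p$-norm, so uniqueness of the limit gives $u=u_0$, which is exactly the viscosity solution of the asserted heat problem; the boundary and initial conditions for $u$ follow from those of $u_0$, or from the uniform convergence near the parabolic boundary established earlier.
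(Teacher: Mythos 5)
Your proof is correct, but it takes a genuinely different route from the paper. The paper proves the supersolution property by passing $H[u_{\e}]\leq 0$ to the limit and then proves the subsolution property by contradiction in the viscosity style: a quadratic $P$ touching $u$ from above with $H[P]<0$ is perturbed to $Q$ and corrected to $Q_{\e}=Q+(w_{\e}-\min w_{\e})$; since $\alpha>\alpha_*$ forces $\min w_{\e}\ra-\infty$ (Lemma \ref{lem-5}), $Q_{\e}\geq\vp_{\e}$ on the holes for small $\e$, so $\min(u_{\e},Q_{\e})$ is an admissible supersolution lying below $u_{\e}$, contradicting minimality. You instead trap $u_{\e}$ globally: $u_0\leq u_{\e}\leq u_0+c\,\zeta_{\e}$ via comparison and minimality, and then show the capacitary potential $\zeta_{\e}$ vanishes in the subcritical regime. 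Both arguments ultimately rest on the same capacity computation ($\e^{-n}a_{\e}^{n-2}\ra 0$ exactly when $\alpha>\alpha_*$), but your sandwich is more elementary — it bypasses the test-function machinery for the subsolution half entirely and as a bonus gives a convergence rate $u_{\e}-u_0=O(\e^{(n-2)\alpha-n})$ away from the holes — whereas the paper's local corrector argument is the one that generalizes to the critical case of Lemma \ref{lem-6}. Three small points to tighten: the claimed single-cube bound $\zeta_{\e}(x)\leq C\bigl(a_{\e}/\mathrm{dist}(x,T_{a_{\e}})\bigr)^{n-2}$ is not literally obtained by comparison in one cell (where $\zeta_{\e}$ need not vanish on $\partial Q^{\e}_m$); you should use the superharmonic sum $\sum_{m}\bigl(a_{\e}/|x-m|\bigr)^{n-2}$, which is exactly the barrier behind Lemma \ref{lem-5} and gives the bound $C\e^{(n-2)\alpha-n}+o(1)$ off the holes, enough for $L^p$-convergence. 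Second, the admissibility check $v_{\e}\geq\vp_{\e}$ off the holes does need the sign discussion you flag (choose $c$ depending on $\sup|\vp|$ and $\sup|g|$ rather than assuming $u_0\geq 0$). Third, for $n=2$ the fundamental-solution barrier must be replaced by its logarithmic analogue with $a^*_{\e}=e^{-1/\e^2}$.
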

\begin{proof}
For $\e>0$, $H[ u_{\e}]\leq 0$. Hence the limit also satisfies
$H[u]\leq 0$ in a viscosity sense. In order to show $u$ is a
sub-solution in $Q_T$, let us assume that there is a point $(x_0,t_0)\in
Q_T$ such that $H[P](x_0,t_0)\leq -\delta_0<0$ for a quadratic
polynomial $P$ such that $(P-u)$ has a minimum value zero at
$(x_0,t_0)$. We are going to choose a small neighborhood of
$(x_0,t_0)$, $B_{\eta}(x_0)\times[t_0-\eta^2,t_0]$, and a quadratic
polynomial $Q(x,t)$ such that
\begin{equation*}
\begin{cases}
Q(x,t)>P(x,t)\quad \mbox{on $\partial B_{\eta}(x_0)\times[t_0-\eta^2,t_0]$ and $B_{\eta}(x_0)\times\{t_0-\eta^2\}$}\\
H[Q]>H[P]\qquad \,\,\mbox{in $B_{\eta}(x_0)\times[t_0-\eta^2,t_0]$}\\
Q(x_0,t_0)<P(x_0,t_0)-\delta_0
\end{cases}
\end{equation*}

Let $Q_{\e}=Q(x,t)+(w_{\e}-\min w_{\e})$. Then $H[ Q_{\e}]=0$ and
$Q_{\e}\geq \vp_{\e}$ since $1-\min w_{\e}\ra \infty$ as $\e\ra 0$.
Hence $\min(u_{\e},Q_{\e})$ is a super-solution of (\ref{eq-main}),
but $\min(u_{\e},Q_{\e})(x_0,t_0)<u_{\e}(x_0,t_0)$, which is a
contradiction against the choice of $u_{\e}$.
\end{proof}
\begin{lemm}
\item When $a_{\e}=O(\e^{\alpha})$ for $\alpha<\alpha_*$, the limit $u$ is a least viscosity super solution of
      \begin{equation*}
      \begin{cases}
      H[ u] \leq 0\qquad \qquad \qquad \textrm{in $ Q_T$}\\
      u=0\qquad \qquad \qquad \textrm{on $\partial_l Q_T$}\\
      u\geq \vp\qquad \qquad \qquad\textrm{in $D$}\\
      u(x,0)=g(x)
      \end{cases}
      \end{equation*}
\label{lem-ho-thick}
\end{lemm}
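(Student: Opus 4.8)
The plan is to read off the statement from Lemma~\ref{lem-3}, supplemented by one comparison argument built on the minimality defining each $u_\e$. First I would fix the exponent: when $a_\e=O(\e^\alpha)$ with $\alpha<\alpha_*=\frac{n}{n-2}$ one can pick $\beta>n$ with $\beta-\alpha(\beta-2)>0$, and such a $\beta$ exists \emph{precisely} because $\alpha<\alpha_*$; then $\e^\beta/a_\e^{\beta-2}=O\bigl(\e^{\beta-\alpha(\beta-2)}\bigr)\to0$ as $\e\to0$. With this $\beta$, Lemma~\ref{lem-3} furnishes a Lipschitz function $u$ with
$$\vp-C\frac{\e^\beta}{a_\e^{\beta-2}}\le u_\e\le u\qquad\text{on }\overline{Q_T},$$
and $u_\e\to u$ uniformly; letting $\e\to0$ in the first inequality gives $u\ge\vp$ in $Q_T$.

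Next I would check $u$ is admissible. Each $u_\e$ solves the obstacle problem (\ref{eq-main}) and in particular is a viscosity super-solution of $H[\cdot]\le0$ in $Q_T$; by the standard stability of viscosity super-solutions under local uniform convergence, $H[u]\le0$ in the viscosity sense. Uniform convergence up to $\partial_lQ_T$ and to $\Omega\times\{0\}$ transfers the conditions $u_\e=0$ on $\partial_lQ_T$ and $u_\e(\cdot,0)=g$ to $u$. Hence $u$ satisfies every constraint in the conclusion.

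The core of the proof is minimality. Let $v$ be any viscosity super-solution of $H[v]\le0$ in $Q_T$ with $v\ge\vp$ in $Q_T$, $v=0$ on $\partial_lQ_T$ and $v(\cdot,0)=g$. Then $v\ge\vp_\e$ in $Q_T$: on $T_{a_\e}$ one has $\vp_\e=\vp\le v$, and on $Q_T\bs T_{a_\e}$ one has $\vp_\e=0\le v$, the latter because $v$ is a heat super-solution with non-negative parabolic boundary data and so $v\ge0$ by the minimum principle. Since $u_\e$ is the \emph{least} viscosity super-solution lying above $\vp_\e$ with lateral data $0$ and initial data $g$, minimality forces $u_\e\le v$ in $Q_T$ for every $\e$; letting $\e\to0$ yields $u\le v$. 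Therefore $u$ is the least viscosity super-solution with the stated properties, in agreement with part (c) of Theorem~\ref{thm-i}.

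As for difficulty: all the real work — the barrier producing the lower bound $\vp-C\e^\beta/a_\e^{\beta-2}\le u_\e$ and the ensuing uniform convergence — is already in Lemma~\ref{lem-3}, so here the only delicate point is the sign inequality $v\ge0$ off the holes, which rests on the compatibility $g\ge0$ implicit in (\ref{eq-main}) (off $T_{a_\e}$ one needs $g\ge\vp_\e(\cdot,0)=0$). If one wishes to assume only $g\ge\vp(\cdot,0)$, the remedy is to compare $u_\e$ with $\max(v,\vp_\e)$ after verifying it is still a viscosity super-solution above $\vp_\e$, or simply to run the comparison on the region where the obstacle is active; no new estimate is required either way.
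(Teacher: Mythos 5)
Your argument is correct, but it is not the route the paper takes for this lemma. The paper's proof of Lemma \ref{lem-ho-thick} is by contradiction: assuming $u(x_0,t_0)<\vp(x_0,t_0)$, it assembles from $u_\e$ a globally $\e$-periodic super-solution $\overline{w}_\e$ whose oscillation stays bounded below by $\tfrac14|u(x_0,t_0)-\vp(x_0,t_0)|$, and this contradicts the corrector dichotomy of Lemma \ref{lem-5}, which rules out order-one oscillation of periodic super-solutions when the holes decay slower than the critical rate. What you do instead is essentially re-run Lemma \ref{lem-3}: the explicit radial barrier gives $\vp-C\e^{\beta}/a_\e^{\beta-2}\le u_\e\le u$, you pass to the limit to get $u\ge\vp$, and you obtain minimality from $v\ge\vp_\e\Rightarrow u_\e\le v$. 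Your verification that a $\beta>n$ with $\beta-\alpha(\beta-2)>0$ exists precisely when $\alpha<\alpha_*$ is a detail the paper leaves implicit and is worth having; note, though, that the convergence $\e^{\beta}/a_\e^{\beta-2}\to 0$ requires reading ``$a_\e=O(\e^{\alpha})$'' as a lower bound $a_\e\geq c\,\e^{\alpha}$ (slow decay), which is what the paper intends in view of Theorem \ref{thm-i}(2c). Since Lemma \ref{lem-3}(2) already asserts the present conclusion, your proof is legitimate but leans on the quantitative barrier, whereas the corrector argument is softer and is the one that transfers to operators for which no explicit barrier of the form $\e^{\beta}|x-m|^{2-\beta}$ is available. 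One genuine slip in your closing remark: comparing with $\max(v,\vp_\e)$ cannot repair the sign issue, because the maximum of two viscosity super-solutions need not be a super-solution (it is the minimum that is); fortunately your primary argument --- $v\ge 0$ off the holes via the minimum principle and the compatibility $g\ge\vp_\e(\cdot,0)=0$ there --- is the correct one and needs no repair.
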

\begin{proof}
The proof is very similar to that of the lemma 4.3 in \cite{CL}.
Likewise we only need to show $u\geq \vp$. Let us assume there is a
point $(x_0,t_0)$ such that $u(x_0,t_0)<\vp(x_0,t_0)$. We are going
to construct a corrector with an oscillation of order 1, which is
impossible in case that the decay rate of $a_{\e}$ is slow, Lemma
\ref{lem-5} and \ref{lem-thick}. For small $\e>0$, we have
\begin{equation*}
|u_{\e}(x,t_0)-u(x,t_0)|<\frac{1}{4}|u(x_0,t_0)-\vp(x_0,t_0)|
\end{equation*}
on $\big(B_{\eta}(x_0)\cap\Omega_{a_{\e}}\big)\times\{t_0\}$. For a
sufficiently large constant $M_1$, we set
\begin{equation*}
u_{\e}+M_1 |x-x_{0}|>\vp(x,t_0)
\end{equation*}
on $\partial B_{\eta}(x_0)\times \{t_0\}$. Then we can set a
periodic function
\begin{equation*}
\overline{w}_{\e}=\min_{m\in\e\Z^{n}}\bigg[\big\{u_{\e}(x-m,t_0)+M_1|x-m-x_{0}|\big\}\chi_{_{B_{\eta}(x_0-m)}}+M_2\chi_{_{\mathbb{R}^n\backslash
B_{\eta}(x_0-m)}}\bigg]
\end{equation*}
for a sufficiently large constant $M_2>0$ and then it is a
super-solution such that $\max \overline{w}_{\e}-\min
\overline{w}_{\e} >\frac14 |u(x_{0},t_0)-\vp(x_{0},t_0)|>0$ for
small $\e>0$ on $B_{\eta}(x_0)\times\{t_0\}$ . Hence we can extend
periodically $\overline{w}_{\e}$ so that we have global periodic
super-solution. But $\overline{w}_{\e}$ will not go to $0$ as $\e\ra
0$, which is a contradiction against Lemma (\ref{lem-5}).
\end{proof}
{\bf Proof of Theorem \ref{thm-i}} (1.) Set
$D=(\cup_{\e<\e_{o}}\cup_{m\in\e\Z^{n}}T_{m}^{\sqrt{\frac{\epsilon
a^{\ast}_{\e}}{2}}})\cap \Omega$. For any $\delta>0$, there is
$\e_{o}>0$ such that  $|D|<\delta$.
Corollary \ref{cor-d-lip} shows the uniform convergence of $u_{\e}$ on $\Omega\bs D$.\\
(2-a),(2-b), and (2-c) come from Lemma \ref{lem-6} , Lemma
\ref{lem-ho-thick} and Lemma \ref{lem-ho-thin}.\qed

\section{Elliptic Eigenvalue Problem in Perforated Domain}
\label{sec-eigen} Before we deal with $\epsilon$-problem for the porous medium equation, we consider nonlinear eigen value problem,
which will describe the behavior of the solution for the porous medium
equation in a neighborhood of $\partial\Omega$. Let's consider the solution $\vp_{\epsilon}(x)$ of
\begin{equation*}\label{EVe}
\begin{cases}
\begin{aligned}
\La \vp_{\epsilon}+\vp^p_{\epsilon}&=0,\quad 0<p<1  \qquad
\qquad \qquad
\textrm{in}\,\, \Omega_{a^{\ast}_{\epsilon}}\\
\vp_{\epsilon}&>0 \qquad \qquad \qquad \qquad \qquad \quad
\textrm{in}\,\,
\Omega_{a^{\ast}_{\epsilon}} \\
\vp_{\epsilon}&=0 \qquad \qquad \qquad \qquad \qquad \textrm{on}
\,\, T_{a^{\ast}_{\epsilon}} \cup
\partial\Omega_{a^{\ast}_{\epsilon}}.
\end{aligned}
\end{cases}
\tag{{\bf $\textrm{EV}_{\epsilon}$}}
\end{equation*}
\subsection{Discrete Nondegeneracy}
We need to construct appropriate barrier functions to estimate the
discrete gradient of a solution $\vp_{\epsilon}$ of \eqref{EVe} on the boundary.
\begin{lemm}
\item For each unit direction $e_i$ and $x \in \partial \Omega$, set
\begin{equation*}
\Delta^{\epsilon}_{e_i}\vp_{\e}=\frac{\vp_{\e}(x+\e
e_i)-\vp_{\e}(x)}{\e}
\end{equation*}
and
\begin{equation*}
\|\Delta^{\epsilon}_{e}\vp_{\e}(x)\|=\sqrt{|\sum_{i}\Delta^{\epsilon}_{e_i}\vp_{\e}|^2}.
\end{equation*}
Then there exist suitable constants $c>0$ and $C<\infty$ such that
\begin{equation*}
c<\|\Delta^{\epsilon}_{e}\vp_{\e}(x)\|<C
\end{equation*}
uniformly.
\end{lemm}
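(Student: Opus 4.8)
The two inequalities will be proved by different mechanisms: $\|\Delta^{\epsilon}_{e}\vp_{\epsilon}(x)\|<C$ is an upper boundary gradient estimate for a uniformly elliptic equation with bounded right--hand side, while $\|\Delta^{\epsilon}_{e}\vp_{\epsilon}(x)\|>c$ is a genuine nondegeneracy statement in which the criticality of $a^{\ast}_{\epsilon}$ is essential. I would begin with a uniform $L^{\infty}$ bound $0\leq\vp_{\epsilon}\leq C_{0}$: since $0<p<1$, a sufficiently large multiple of $R^{2}-|x-x_{0}|^{2}$ (with $\Omega\subset B_{R}(x_{0})$) is a supersolution of $\La v+v^{p}=0$ — the condition to be met on the constant is solvable precisely because $p<1$ — and it dominates $\vp_{\epsilon}$ on $T_{a^{\ast}_{\epsilon}}\cup\partial\Omega_{a^{\ast}_{\epsilon}}$, so the maximum principle gives the bound. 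For the upper bound on the discrete gradient, fix $x\in\partial\Omega$; the smoothness of $\partial\Omega$ provides a uniform exterior ball $B_{r}(z)$ with $\overline{B_{r}(z)}\cap\overline{\Omega}=\{x\}$, and I would compare $\vp_{\epsilon}$ in $B_{\rho_{0}}(x)\cap\Omega$ with the standard radial Newtonian barrier centered at $z$ (tuned so that it vanishes on $\partial B_{r}(z)$, majorizes $C_{0}$ on $\partial B_{\rho_{0}}(x)$, and stays nonnegative). This comparison is legitimate because $\La\vp_{\epsilon}=-\vp^{p}_{\epsilon}\geq-C_{0}^{p}$ and $\vp_{\epsilon}=0$ on the perforations inside $B_{\rho_{0}}(x)$, and it yields $\vp_{\epsilon}(y)\leq C\,(|y-z|-r)\leq C|y-x|$ there. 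Taking $y=x+\epsilon e_{i}$ gives $0\leq\Delta^{\epsilon}_{e_{i}}\vp_{\epsilon}(x)\leq C$ for every $i$, hence $\|\Delta^{\epsilon}_{e}\vp_{\epsilon}(x)\|\leq C'$, uniformly in $x$ and $\epsilon$.

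For the lower bound I would first produce a fixed interior floor. As announced in the introduction, the homogenization of \eqref{EVe} can be carried out by adapting the viscosity method of \cite{CL} to the sublinear reaction term, so that $\vp_{\epsilon}$ converges — uniformly away from an arbitrarily small neighbourhood of $T_{a^{\ast}_{\epsilon}}$ — to a solution $\vp>0$ of the homogenized equation $\La\vp+\vp^{p}-\kappa\vp=0$, $\vp=0$ on $\partial\Omega$ (the $-\kappa\vp$ being the Cioranescu--Murat strange term); its positivity is elementary, since $\lambda\phi_{1}$, with $\phi_{1}$ the first Dirichlet eigenfunction of $-\La$ on $\Omega$, is a subsolution for $\lambda$ small, again because $\lambda^{p}\gg\lambda$. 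In particular there are $\rho_{0}>0$, $c_{0}>0$ with $\vp_{\epsilon}\geq c_{0}$ on $\{\,\mathrm{dist}(\cdot,\partial\Omega)=\rho_{0}\,\}\cap\Omega_{a^{\ast}_{\epsilon}}$ for all small $\epsilon$. Then I use that $\vp_{\epsilon}$ is superharmonic, $\La\vp_{\epsilon}=-\vp^{p}_{\epsilon}\leq 0$, and compare it from below in the strip $S=\{\,0<\mathrm{dist}(\cdot,\partial\Omega)<\rho_{0}\,\}\cap\Omega_{a^{\ast}_{\epsilon}}$ with a subsolution $\psi_{\epsilon}$ that vanishes on $\partial\Omega\cup T_{a^{\ast}_{\epsilon}}$, lies below $c_{0}$ on $\{\mathrm{dist}(\cdot,\partial\Omega)=\rho_{0}\}$, and is built — as in the corrector construction of Lemma \ref{lem-5} and \cite{CL} — from a profile linear in $\mathrm{dist}(\cdot,\partial\Omega)$ corrected cell by cell by the capacitary potential of the hole. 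The criticality of $a^{\ast}_{\epsilon}$ guarantees that this correction lowers the profile only by a bounded factor, so $\psi_{\epsilon}(y)\geq c_{1}\,\mathrm{dist}(y,\partial\Omega)$ whenever $y$ is at distance $\gtrsim\epsilon$ from $T_{a^{\ast}_{\epsilon}}$. Taking $e_{i}$ (one of the $2n$ lattice directions, with the appropriate sign) most nearly aligned with the inner normal $\nu$ at $x$, smoothness of $\partial\Omega$ gives $\mathrm{dist}(x+\epsilon e_{i},\partial\Omega)\geq\tfrac12\langle\nu,e_{i}\rangle\epsilon\geq\epsilon/(2\sqrt n)$ for small $\epsilon$, while $x+\epsilon e_{i}$ is at distance $\gtrsim\epsilon$ from $T_{a^{\ast}_{\epsilon}}$ except when $x$ lies in a (vanishingly small) $a^{\ast}_{\epsilon}$-scale neighbourhood of $\epsilon\Z^{n}$ — the natural restriction, consistent with the ``away from the holes'' formulation used throughout Section \ref{sec-obstacle}. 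Hence $\vp_{\epsilon}(x+\epsilon e_{i})\geq\psi_{\epsilon}(x+\epsilon e_{i})\geq c_{1}\epsilon/(2\sqrt n)$, and therefore $\|\Delta^{\epsilon}_{e}\vp_{\epsilon}(x)\|\geq c>0$.

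The main obstacle is the construction of $\psi_{\epsilon}$: one must upgrade the corrector dichotomy of Lemma \ref{lem-5}, which is local in $\R^{n}$, to a \emph{quantitative, pointwise} statement adjacent to the smooth boundary — namely that a function vanishing on the critically sized perforations and on $\partial\Omega$, yet comparable to a fixed constant at depth $\rho_{0}$, still grows at least linearly in $\mathrm{dist}(\cdot,\partial\Omega)$ at the scale of a single $\epsilon$-cell, with a constant independent of $\epsilon$. This is exactly where $\alpha_{\ast}=n/(n-2)$ (respectively $a^{\ast}_{\epsilon}=e^{-1/\epsilon^{2}}$ for $n=2$) is forced: a slower decay of $a_{\epsilon}$ would drive the comparison function to $0$ and the nondegeneracy would genuinely fail, whereas a faster decay would make the perforations invisible and the estimate easy; only at the critical rate does one obtain the clean two--sided bound with uniform constants. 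A secondary point, to be dispatched first, is that the interior floor $\vp_{\epsilon}\geq c_{0}$ relies on the homogenization of \eqref{EVe}, whose careful setup — including the selection of the positive branch of $\La\vp+\vp^{p}-\kappa\vp=0$ — is itself part of Section \ref{sec-eigen}.
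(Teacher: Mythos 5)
Your upper bound is sound and essentially the paper's argument in different clothing: the paper compares $\vp_{\e}$ with a global barrier $h^{+}$ solving $\La h^{+}=-M^{p}-1$, $h^{+}=0$ on $\partial\Omega$, and reads off $|\nabla h^{+}|\leq C$ on $\partial\Omega$ from elliptic regularity, while you use the uniform $L^{\infty}$ bound (your supersolution argument exploiting $p<1$ is correct and is implicitly needed by the paper as well) together with a local exterior-ball barrier. Either route gives $\vp_{\e}(y)\leq C|y-x|$ and hence $\|\Delta^{\e}_{e}\vp_{\e}(x)\|\leq C$.

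The lower bound, however, contains a genuine gap, and you have located it yourself: the comparison subsolution $\psi_{\e}$ --- vanishing on $\partial\Omega\cup T_{a^{\ast}_{\e}}$, bounded below by $c_{0}$ at depth $\rho_{0}$, and growing linearly in $\operatorname{dist}(\cdot,\partial\Omega)$ at scale $\e$ with an $\e$-independent constant --- is never constructed; you only list the properties it must have and call its construction ``the main obstacle.'' Since the entire nondegeneracy claim reduces to the existence of such a barrier, the proof is incomplete at its central step. The paper avoids building this barrier by hand: it takes $\psi_{\e}$ to be the solution of the obstacle problem $\min\int|\nabla\psi_{\e}|^{2}$ over $\{\psi_{\e}\in H^{1}_{0}(\Omega_{a^{\ast}_{\e}}),\ \psi_{\e}\geq\vp_{\e}\chi_{D}\}$, invokes Theorem 3.21 of \cite{JKO} to pass to the homogenized limit $\psi$ satisfying $\La\psi-\kappa\psi=0$ outside $D$ with $\psi\geq\delta_{0}$ on $D$, and then gets the boundary nondegeneracy from the Hopf lemma applied to $\psi$, transferring it back to $\vp_{\e}\geq\psi_{\e}$ by the maximum principle. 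A second, logically prior problem is your source of the ``interior floor'' $\vp_{\e}\geq c_{0}$ at depth $\rho_{0}$: you derive it from the viscosity homogenization of \eqref{EVe} to a positive solution of $\La\vp+\vp^{p}-\kappa\vp=0$, but that homogenization is Theorem \ref{thm-eigen}, proved \emph{after} and \emph{using} the present lemma (via the discrete gradient estimate and almost flatness), so your argument is circular as written. The paper's substitute is self-contained and purely variational: it bounds the Rayleigh quotients $\lambda_{\e}$ uniformly using the corrector test function $\eta(1-w_{\e})$, extracts a weak $H^{1}_{0}$ limit $\tilde\vp$ with $\|\tilde\vp\|_{L^{p+1}}=1$ by compact embedding, and concludes that the limit of $\vp_{\e}=\lambda_{\e}^{1/(1-p)}\tilde\vp_{\e}$ is bounded below by $\delta_{0}$ on a set of positive measure. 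Finally, note that your restriction of the conclusion to $x$ outside an $a^{\ast}_{\e}$-neighbourhood of $\e\Z^{n}$ weakens the statement, which asserts the two-sided bound for all $x\in\partial\Omega$ uniformly.
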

\begin{proof}
Let $h^+$ be a solution of
\begin{equation*}
\begin{cases}
\begin{aligned}
\La h^+&=- M^p-1 \qquad \qquad \qquad \mbox{in $\Omega$} \\
h^+&=0 \qquad \qquad \qquad \qquad \quad \mbox{on $\partial \Omega$}
\end{aligned}
\end{cases}
\end{equation*}
with $M\geq\sup_{\Omega_{a_{\epsilon}}^{\ast}} \vp_{\epsilon}$. Then, we have
\begin{equation*}
\vp_{\epsilon} \leq h^+ \qquad \mbox{in $\Omega_{a_{\epsilon}}^{\ast}$}
\end{equation*}
by the maximum principle and
\begin{equation*}
 \sup_{\partial\Omega} |\nabla h^+|<C
\end{equation*}
by the standard elliptic regularity theory. Thus, for $x\in
\partial \Omega$,
\begin{equation*}
\|\Delta^{\epsilon}_e\vp_{\e}(x)\|\leq \|\nabla h^+(x)\|<C
\end{equation*}
when we extend $\vp_{\e}$ to zero in $\mathbb{R}^n\bs\Omega$. \\
\indent To get a lower bound, we first show that the limit
function $\vp$, of $\vp_{\e}$, is not identically zero. Let
\begin{equation*}
\lambda_{\e}=\min_{\tilde{\vp}_{\e} \in H_0^1(\Omega_{a^{\ast}_{\epsilon}}),
\|\tilde{\vp}_{\e}\|_{_{L^{^{p+1}}(\Omega_{a^{\ast}_{\epsilon}})}}=1}\|\nabla \tilde{\vp}_{\e}\|_{L^{^2}}.
\end{equation*}
For $0\leq\eta\in C_0^{\infty}(\Omega)$ and corrector $w_{\epsilon}$ given in Section \ref{sec-obstacle}, set $\theta(x)=\eta(x)(1-w_{\e}(x))$.
Then
\begin{equation*}
\begin{aligned}
\int_{\Omega_{a^{\ast}_{\e}}}|\nabla
\eta(1-w_{\e})|^2dx=&\int_{\Omega_{a^{\ast}_{\e}}}\nabla
\big[\eta(1-w_{\e})\big]\cdot\nabla
\big[\eta(1-w_{\e})\big]dx\\
=&\int_{\Omega_{a^{\ast}_{\e}}}(1-w_{\e})^2|\nabla \eta|^2-2\eta(1-w_{\e})\nabla \eta\cdot\nabla w_{\e}+\eta^2|\nabla w_{\e}|^2dx\\
\leq&2\int_{\Omega_{a^{\ast}_{\e}}}(1-w_{\e})^2|\nabla
\eta|^2+\eta^2|\nabla w_{\e}|^2dx.
\end{aligned}
\end{equation*}
Since $\int_{\Omega_{a^{\ast}_{\e}}}|\nabla w_{\e}|^2dx<C$ for some
$0<C<\infty$, we get
\begin{equation*}
\left(\int_{\Omega_{a^{\ast}_{\e}}}|\nabla
\eta(1-w_{\e})|^2dx\right)^{\frac{1}{2}}\leq C_1
\end{equation*}
for some constant $0<C_1<\infty$. On the other hand,
\begin{equation*}
\int_{\Omega_{a^{\ast}_{\e}}}|\eta(1-w_{\e})|^{p+1}dx=C_{2,\e}
\end{equation*}
for some constant $C_{2,\epsilon}$ depending on $\epsilon$. Since $(1-w_{\e})\rightharpoonup 1$ in $L^2(\Omega)$. we get
\begin{equation*}
\lim_{\e \to 0}C_{2,\e}=C_2<+\infty.
\end{equation*}
Thus
\begin{equation*}
\left(\int_{\Omega_{a^{\ast}_{\e}}}|\frac{1}{C_{2,\e}}\eta(1-w_{\e})|^{p+1}dx\right)^{\frac{1}{p+1}}=1
\end{equation*}
and
\begin{equation*}
\left(\int_{\Omega_{a^{\ast}_{\e}}}|\frac{1}{C_{2,\e}}\nabla[\eta(1-w_{\e})]|^2\right)^{\frac{1}{2}}\leq
C_1/C_{2,\e}.
\end{equation*}
Therefore we have
\begin{equation*}
\lambda_{\e}<\frac{2C_1}{C_2}<+\infty.
\end{equation*}
Then, the sequence $\{\epsilon\}$ has a subsequence which we still denote by $\{\epsilon\}$ such that
\begin{equation}\label{eq-aligned-tilde-vp-epsilon-to-tilde-vp}
\begin{aligned}
\tilde{\vp}_{\e} &\rightharpoonup \tilde{\vp} \qquad \qquad \mbox{in $H^1_0(\Omega)$} \\
\lambda_{\e}&\to \lambda.
\end{aligned}
\end{equation}
Since $H^1_0$ is compactly embedded in $L^{p+1}$, $\tilde{\vp}_{\e}\to \tilde{\vp}$
in $L^{p+1}(\Omega)$ implies $\|\tilde{\vp}\|_{L^{p+1}(\Omega)}=1$. Note that $\lambda
\neq 0$. Otherwise $\tilde{\vp}$ satisfies
\begin{equation*}
\begin{cases}
\begin{aligned}
\La \tilde{\vp}&=0 \qquad \qquad \mbox{in $\Omega$}\\
\tilde{\vp}=&0 \qquad \qquad \quad \mbox{on $\partial \Omega$}.
\end{aligned}
\end{cases}
\end{equation*}
Then $\tilde{\vp}=0$ which gives a contradiction since
$\|\tilde{\vp}\|_{L^{p+1}(\Omega)}=1$. For each $\lambda_{\epsilon}$, the function $\vp_{\epsilon}=\lambda_{\epsilon}^{\frac{1}{1-p}}\tilde{\vp}_{\epsilon}$ can be the solution of \eqref{EVe}. By \eqref{eq-aligned-tilde-vp-epsilon-to-tilde-vp}, 
\begin{equation*}
\vp_{\epsilon}=\lambda_{\epsilon}^{\frac{1}{1-p}}\tilde{\vp}_{\epsilon} \rightharpoonup \lambda^{\frac{1}{1-p}}\tilde{\vp}=\vp \qquad \mbox{in $H^1_0(\Omega)$}.
\end{equation*}
Since $\|\vp\|_{L^{p+1}(\Omega)}=\lambda^{\frac{1}{1-p}}\|\tilde{\vp}\|_{L^{p+1}(\Omega)}>0$, there is some constant
$\delta_0>0$ such that
\begin{equation*}
\vp \geq \delta_0>0 \quad \mbox{in $D \subset \Omega$} \qquad
\mbox{and} \qquad |D|\neq 0.
\end{equation*}
Now we consider the $\vp_{\e}\chi_{_{D}}=\overline{\vp}_{\e}$ and
denote by $\psi_{\e}$ the minimizer of
\begin{equation*}
\int_{\Omega_{a^{\ast}_{\e}}}|\nabla \psi_{\e}|^2dx
\end{equation*}
in $K_{\epsilon}=\{\psi_{\e}\in H_0^1(\Omega_{\e}),\,\, \psi_{\e}
\geq \overline{\vp}_{\e}\}$. Then, by the Theorem 3.21 in
\cite{JKO}, $\psi_{\e} \rightharpoonup \psi $ in $H^1_0(\Omega)$ and
\begin{equation*}
\begin{cases}
\begin{aligned}
\psi \geq \overline{\vp}&=\vp\chi_{_{D}}\geq \delta_0 \qquad \qquad
\mbox{in $D$}\\
\La \psi &-\kappa\psi=0 \qquad \qquad \mbox{in $\Omega\bs D$}
\end{aligned}
\end{cases}
\end{equation*}
for $\kappa=\mbox{cap$(B_1)$}$. Since $\psi_{\e}$ satisfies the
harmonic equation in $\Omega_{a^{\ast}_{\e}}\bs
\{\psi_{\e}=\overline{\vp}_{\e}\}$ with $\psi_{\e}=\vp_{\e}=0$ on
$\partial \Omega_{a^{\ast}_{\e}}$, we get $\vp_{\e} \geq \psi_{\e}$ in
$\Omega_{a^{\ast}_{\e}}\bs \{\psi_{\e}=\overline{\vp}_{\e}\}$ and
then in a neighborhood of $\partial\Omega$ by the maximum principle.
On the other hand, by the Hopf principle,
\begin{equation*}
\inf_{x\in\partial \Omega}|\nabla \psi(x)|>\delta_1>0.
\end{equation*}
Hence, there is a lower bound of
$\|\Delta^{\epsilon}_e\psi_{\e}\|(x)$ if $x\in \partial \Omega$,
which means
\begin{equation*}
\|\Delta^{\epsilon}_e\vp_{\e}\|\geq\|\Delta^{\epsilon}_e\psi_{\e}\|>\delta_1>0
\qquad \qquad \mbox{for $x\in \partial \Omega$}.
\end{equation*}
Therefore, $\|\Delta^{\epsilon}_e\vp_{\e}\|$ is bounded below by some constant and the lemma follows.
\end{proof}

\subsection{Discrete Gradient Estimate}
\begin{lemm}[Discrete Gradient Estimate]
For the solution $\vp_{\epsilon}$ of ($\textrm{EV}_{\e}$),
\begin{equation*}
\|\Delta^{\e}_e\vp_{\e}(x)\|^2\leq C
\end{equation*}
for all $x \in \overline{\Omega}$ when we extend $\vp_{\e}(x)$ to
zero in $\mathbb{R}^n\bs\overline{\Omega}$.
\end{lemm}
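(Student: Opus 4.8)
The plan is to upgrade the boundary discrete-gradient bound of the previous lemma to an interior-and-boundary bound valid for all $x\in\overline\Omega$, following the same penalization/maximum-principle scheme used for Lemma~\ref{lem-d-c11} but now accounting for the nonlinear reaction term $\vp_\e^p$. First I would note that, since $\vp_\e$ is extended by zero outside $\overline\Omega$ and vanishes on $T_{a^\ast_\e}\cup\partial\Omega_{a^\ast_\e}$, the function $\vp_\e$ is a legitimate competitor for difference-quotient arguments, and the extension is harmless because $\vp_\e\ge 0$ forces $\Delta^\e_e\vp_\e$ to have a sign near the zero set. As in Lemma~\ref{lem-d-c11}, I would approximate $\vp_\e$ by solutions $\vp_{\e,\delta}$ of a penalized problem that enforces the obstacle-type constraint imposed by the holes, so that in the region where the penalty is inactive $\vp_{\e,\delta}$ solves $\La\vp_{\e,\delta}+\vp_{\e,\delta}^p=0$ classically.

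Next I would set $Z=\sup_{\Omega}\|\Delta^\e_e\vp_{\e,\delta}(x)\|^2$ (the sum over unit directions $e_i$ of squared difference quotients) and suppose the supremum is attained at an interior point $x_0$ of the inactive region. Differencing the equation in the direction $\e e_i$ gives
\begin{equation*}
\La\big(\Delta^\e_{e_i}\vp_{\e,\delta}\big)+\frac{\vp_{\e,\delta}(x+\e e_i)^p-\vp_{\e,\delta}(x)^p}{\e}=0,
\end{equation*}
and squaring and summing yields, at an interior maximum of $Z$,
\begin{equation*}
0\ \ge\ \La Z\ =\ 2\sum_i\big|\nabla\Delta^\e_{e_i}\vp_{\e,\delta}\big|^2-2\sum_i\Delta^\e_{e_i}\vp_{\e,\delta}\cdot\frac{\vp_{\e,\delta}(x+\e e_i)^p-\vp_{\e,\delta}(x)^p}{\e}.
\end{equation*}
The key point is that $s\mapsto s^p$ is increasing on $[0,\infty)$, so the last term has the same sign as $\sum_i|\Delta^\e_{e_i}\vp_{\e,\delta}|^2$; more precisely, writing the divided difference of $s^p$ as $p\,\xi^{p-1}$ for an intermediate value $\xi>0$ and using the uniform $L^\infty$ bound $\sup\vp_\e\le M$ together with a lower barrier near $\partial\Omega$ (from the previous lemma, $\vp_\e$ is comparable to $\psi_\e$ there, which is bounded below by a Hopf-type linear profile), one controls $\xi^{p-1}$ from above by a uniform constant on the relevant set. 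This turns the differenced equation into a differential inequality of the form $\La Z\ge c_0 Z - C$ at the interior maximum, which is impossible unless $Z\le C/c_0$; hence $Z$ is bounded by a uniform constant in the interior of the inactive region.

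It then remains to handle the boundary of the inactive region, which splits into the outer boundary $\partial\Omega$ and the boundaries of the holes. On $\partial\Omega$ the bound is exactly the conclusion of the preceding lemma, $\|\Delta^\e_e\vp_\e\|\le C$. On $\partial T_{a^\ast_\e}$, where $\vp_{\e,\delta}$ meets the penalty (the constraint $\vp_\e=0$ in the holes), I would use the same self-similar/fundamental-solution comparison that underlies Lemma~\ref{lem-osc-p} and the eigenvalue analysis: $\vp_\e$ near a hole is squeezed between multiples of the capacitary potential of the hole, so its difference quotient on a lattice of spacing $\e$ — which steps across a full cell $Q^\e_m$ — is controlled by $\osc_{Q^\e_m}\vp_\e/\e$, and the critical scaling $a^\ast_\e=\e^{\alpha_\ast}$ is precisely what makes this $O(1)$. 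Finally, letting $\delta\to 0$ and then taking the uniform-in-$\e$ constant, and invoking the maximum principle to propagate the boundary bounds inward (the penalty term, being monotone, only helps), gives $\|\Delta^\e_e\vp_\e(x)\|^2\le C$ for all $x\in\overline\Omega$. The main obstacle I anticipate is making the control of the intermediate value $\xi^{p-1}$ rigorous where $\vp_\e$ is small but positive — i.e.\ pairing the lower barrier near the boundary with the degeneracy of $s\mapsto s^{p-1}$ as $s\to 0^+$ (recall $0<p<1$, so $p-1<0$ and $\xi^{p-1}\to\infty$); this is exactly where the nondegeneracy lemma of the previous subsection, providing a uniform linear lower bound for $\vp_\e$ off the holes, must be used to keep the reaction term's contribution bounded.
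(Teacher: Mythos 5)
Your plan is to run the Bernstein/penalization argument of Lemma~\ref{lem-d-c11} directly on $\vp_{\e}$, but the step where you close the maximum principle does not work, for two reasons. First, the sign is wrong: differencing $\La\vp_{\e}+\vp_{\e}^p=0$ and computing $\La\|\Delta^{\e}_e\vp_{\e}\|^2$ gives
$\La Z=2\sum_i|\nabla\Delta^{\e}_{e_i}\vp_{\e}|^2-2\sum_i p\,\xi_i^{p-1}|\Delta^{\e}_{e_i}\vp_{\e}|^2$,
so the reaction contributes a term $\le 0$ (it is $-2p\xi^{p-1}f^2$, not $+c_0Z$). At an interior maximum the inequality $\La Z\le 0$ is then perfectly consistent with $Z$ being arbitrarily large; you cannot extract $\La Z\ge c_0Z-C$ with $c_0>0$, which is what a contradiction would require. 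Compare with Lemma~\ref{lem-d-c11}, where the zeroth-order term is the penalty $\beta_{\delta}$ with $\beta_{\delta}'\ge 0$ multiplying $|\Delta u|^2-\Delta u\,\Delta\vp_{\e}$, which is forced positive at the hypothetical maximum; the eigenvalue reaction term has no such structure. Second, even setting the sign aside, your control of $\xi^{p-1}$ ($p-1<0$) by a lower bound on $\vp_{\e}$ fails throughout the interior: $\vp_{\e}$ vanishes on \emph{every} hole $T_{a^{\ast}_{\e}}(m)$, and these are $\e$-periodically distributed over all of $\Omega$, so $\xi^{p-1}$ blows up near every cell, not only near $\partial\Omega$; the nondegeneracy lemma of the previous subsection gives a bound for $\|\Delta^{\e}_e\vp_{\e}\|$ on $\partial\Omega$ only, not a uniform positive lower bound for $\vp_{\e}$ off the holes. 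Your treatment of the hole boundaries via $\osc_{Q^{\e}_m}\vp_{\e}/\e$ is also circular in spirit, since a bound of that oscillation by $O(\e)$ is essentially the statement being proved (and the almost-flatness lemma appears only afterwards, and only on the annulus away from the hole).

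The paper avoids all of this by decoupling the gradient estimate from the reaction term through the Green function. It first proves the discrete gradient bound for the Green function $G_{\Omega,a^{\ast}_{\e}}$ of the \emph{perforated} domain, comparing it on $\partial\Omega$, on $\partial B_{\gamma}(y)$ and in the interior with the Green function $G_{\Omega}$ of the unperforated domain and with the truncated fundamental solution $\Gamma_{\gamma}$; there the penalized equation is a pure Laplace equation plus a monotone penalty, so the maximum principle argument does close. It then writes $\vp_{\e}(x)=\int_{\Omega_{a^{\ast}_{\e}}}G_{\Omega,a^{\ast}_{\e}}(x,y)\vp_{\e}^p(y)\,dy$ and differentiates under the integral, using only $\|\vp_{\e}\|_{\infty}\le C$ and the integrability of $|x-y|^{1-n}$; the singular factor $\vp_{\e}^{p-1}$ never appears. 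If you want a direct argument on $\vp_{\e}$, you would need some substitute for this representation formula; as written, the proposal has a genuine gap at its central step.
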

\begin{proof}
Let $G_{\Omega}$ and $G_{\Omega, a^{\ast}_{\e}}$ are the Green functions
of the Laplace equation in $\Omega$ and $\Omega_{a^{\ast}_{\e}}$,
respectively. We choose constant $\gamma$ such that
\begin{equation*}
B_{2\gamma}(y)\subset \Omega_{
a^{\ast}_{\epsilon}} \qquad (y \in \Omega_{
a^{\ast}_{\epsilon}})
\end{equation*}
and let the function $G_{\Omega,
a^{\ast}_{\e},\gamma}$ to be a solution of
\begin{equation*}
\begin{cases}
\begin{aligned}
\La G_{\Omega,
a^{\ast}_{\e},\gamma}&=0 \qquad \qquad \qquad \qquad \mbox{in $\Omega_{a^{\ast}_{\e}}\bs B_{\gamma}(y)$} \\
G_{\Omega, a^{\ast}_{\e},\gamma}&=0 \qquad \qquad \qquad \qquad
\mbox{on $\partial T_{a^{\ast}_{\e}} \cup \partial \Omega$}\\
G_{\Omega, a^{\ast}_{\e},\gamma}(x,y)&=G_{\Omega}(x,y) \qquad \qquad
\mbox{on $B_{\gamma}(y)$}.
\end{aligned}
\end{cases}
\end{equation*}
Then we get
\begin{equation*}
G_{\Omega,a^{\ast}_{\e},\gamma} \leq G_{\Omega} \qquad \qquad
\mbox{in $\Omega_{a^{\ast}_{\e}}\bs B_{\gamma}(y)$}.
\end{equation*}
Therefore, we obtain
\begin{equation}\label{eq-3-2-2}
|\Delta^{\e}_e G_{\Omega, a^{\ast}_{\e},\gamma}|\leq |\Delta^{\e}_e
G_{\Omega}| \qquad \qquad  \mbox{on $ \partial \Omega $}.
\end{equation}
To get the estimate on $\partial B_{\gamma}(y)$, consider
the difference
\begin{equation*}
G(x,y)=G_{\Omega}(x,y)-G_{\Omega,a^{\ast}_{\e},\gamma}(x,y).
\end{equation*}
Then $G(x,y)$ satisfies
\begin{equation*}
\begin{cases}
\begin{aligned}
\La G&=0 \qquad \qquad \qquad \quad \mbox{in
$\Omega_{a^{\ast}_{\e}}\bs
B_{\gamma}(y)$}\\
G&=0 \qquad \qquad \qquad \quad \mbox{on $\partial \Omega \cup
\partial
B_{\gamma}(y)$}\\
G(x,y)&=G_{\Omega}(x,y) \qquad \quad \mbox{on $\partial
T_{a^{\ast}_{\e}}$}.
\end{aligned}
\end{cases}
\end{equation*}
Note that $G_{\Omega}(x,y)$ has similar behaivour to $O(|x-y|^{2-n})$ as $|x-y| \to 0$. Thus,
\begin{equation*}
\max_{_{|x-y|\geq 2\gamma}}G_{\Omega}(x,y) < \frac{1}{2}
\min_{_{|x-y|=\gamma}}G_{\Omega}(x,y)
\end{equation*}
for a sufficiently small $\gamma>0$. Thus, there exists a constant $C>0$ such that 
\begin{equation*}
\begin{aligned}
&G(x,y) \leq \max_{_{|x-y|\geq 2\gamma}}G_{\Omega}(x,y)<\min_{_{|x-y|=\gamma}}G_{\Omega}(x,y)-\max_{_{|x-y|\geq 2\gamma}}G_{\Omega}(x,y)\\
&\qquad \qquad \leq C{\Gamma}_{\gamma}(x-y)=C\left(\frac{1}{{\gamma}^{n-2}}-\frac{1}{|x-y|^{n-2}}\right) \qquad \mbox{on $\partial T_{a_{\epsilon}}^{\ast}$}.
\end{aligned}
\end{equation*}
Thus
\begin{equation*}
G(x,y) \leq C{\Gamma}_{\gamma}(x-y) \qquad \mbox{in $\Omega_{a_{\epsilon}^{\ast}}\bs B_{\gamma}(y)$}.
\end{equation*}
Since ${\Gamma}_{\gamma}(x-y)=
G_{\Omega}(x,y)=0$ on $\partial
B_{\gamma}(y)$, we have
\begin{equation}\label{eq-3-2-3}
|\Delta^{\e}_e G_{\Omega, a^{\ast}_{\e},\gamma}|\leq |\Delta^{\e}_e
G_{\Omega}|+|\Delta^{\e}_e G|\leq |\Delta^{\e}_e
G_{\Omega}|+|\Delta^{\e}_e {\Gamma}_{\gamma}| \qquad \mbox{on $\partial B_{\gamma}(y)$.}
\end{equation}
To show the estimate at the interior points, we use the approximation method. As in \cite{Fr}, $G_{\Omega,a^{\ast}_{\e},\gamma}$ can be approximated
by the solutions, $G_{\Omega,a^{\ast}_{\e},\gamma,\delta}$, of the
following penalized equations,
\begin{equation}\label{eq-3-2-1}
\begin{aligned}
\La G_{\Omega,a^{\ast}_{\e},\gamma,\delta}+\beta_{\delta}(&-G_{\Omega,a^{\ast}_{\e},\gamma,\delta}+G_{\Omega}\cdot\xi(x) )=0 \qquad \quad \textrm{in}\,\, \Omega\bs B_{\gamma}(y)\\
G_{\Omega,a^{\ast}_{\e},\gamma,\delta}&=0 \qquad \qquad \qquad
\qquad \qquad \qquad \quad \textrm{on} \,\,
\partial\Omega \\
G_{\Omega, a^{\ast}_{\e},\gamma,\delta}(x,y)&=G_{\Omega}(x,y) \qquad
\qquad \qquad \qquad \quad \mbox{on $B_{\gamma}(y)$}
\end{aligned}
\end{equation}
where $\beta_{\delta}(s)$ satisfies
\begin{equation*}
\begin{aligned}
\beta_{\delta}'(s) \geq 0,\quad &\beta_{\delta}''(s) \leq 0, \quad
\beta_{\delta}(0)=-1,\\
\beta_{\delta}(s)=0 \quad \textrm{for}\,\, s>&\delta, \qquad
\lim_{\delta \to 0}\beta_{\delta}(s) \to -\infty \quad \textrm{for}
\,\, s<0
\end{aligned}
\end{equation*}
and a $\e$-periodic function $\xi(x)\in C^{\infty}$ satisfies
\begin{equation*}
\begin{aligned}
0 &\leq \xi \leq  1, \qquad \xi=0 \quad \mbox{in
$T_{a^{\ast}_{\e}}$}, \qquad \xi=1 \quad \mbox{in
$\mathbb{R}^n_{{\e}^{\frac{n-1}{n-2}}}$},\\
\La &\xi=0 \quad \mbox{in $\R^N\bs\left\{T_{a^{\ast}_{\e}}\cup\mathbb{R}^n_{{\e}^{\frac{n-1}{n-2}}}\right\}$},\qquad
\La \xi \leq 0, \qquad \Delta^{\e}_e\xi=0.
\end{aligned}
\end{equation*}
Similar to the proof of Lemma
\ref{subsec-u-epsilon}, we get
\begin{equation}\label{eq-aligned-penalized-eq-green-omega-a-ast-e-gamma-delta-43}
\begin{aligned}
\La (|\Delta^{\e}_e
G_{\Omega,a^{\ast}_{\e},\gamma,\delta}|^2)&-2|\nabla (\Delta^{\e}_e
G_{\Omega,a^{\ast}_{\e},\gamma,\delta})|^2\\
&-2\beta'_{\delta}(\cdot)\big(|\Delta^{\e}_e
G_{\Omega,a^{\ast}_{\e},\gamma,\delta}|^2-\Delta^{\e}_e
G_{\Omega,a^{\ast}_{\e},\gamma,\delta}\cdot\xi\Delta^{\e}_e
G_{\Omega} \big)=0.
\end{aligned}
\end{equation}
Suppose that $\max_{\Omega\bs B_{\gamma}(y)}|\Delta^{\e}_e
G_{\Omega,a^{\ast}_{\e},\gamma,\delta}|^2$ occurs at an interior point $x_0$. If $|\Delta^{\e}_e
G_{\Omega,a^{\ast}_{\e},\gamma,\delta}|^2(x_0)>|\Delta^{\e}_e
G_{\Omega}|^2(x_0)$, then by \eqref{eq-aligned-penalized-eq-green-omega-a-ast-e-gamma-delta-43} we get
\begin{equation*}
\begin{aligned}
0>\La (|\Delta^{\e}_e
G_{\Omega,a^{\ast}_{\e},\gamma,\delta}|^2)&-2|\nabla (\Delta^{\e}_e
G_{\Omega,a^{\ast}_{\e},\gamma,\delta})|^2\\
&-2\beta'_{\delta}(\cdot)\big(|\Delta^{\e}_e
G_{\Omega,a^{\ast}_{\e},\gamma,\delta}|^2-\Delta^{\e}_e
G_{\Omega,a^{\ast}_{\e},\gamma,\delta}\cdot\xi\Delta^{\e}_e
G_{\Omega} \big)=0.
\end{aligned}
\end{equation*}
Therefore 
\begin{equation}\label{eq-3-2-3-3-2-3-3-2-3}
|\Delta^{\e}_e
G_{\Omega,a^{\ast}_{\e},\gamma,\delta}|^2\leq |\Delta^{\e}_e
G_{\Omega}|^2 \qquad \mbox{in the interior of $\Omega\bs B_{\gamma}(y)$.}
\end{equation} 
By \eqref{eq-3-2-2}, \eqref{eq-3-2-3} and \eqref{eq-3-2-3-3-2-3-3-2-3}
\begin{equation*}
|\Delta^{\e}_e G_{\Omega,a^{\ast}_{\e},\gamma,\delta}|\leq
|\Delta^{\e}_e
G_{\Omega}|+|\Delta^{\e}_e {\Gamma}_{\gamma}| \qquad \mbox{in $\Omega\bs
B_{\gamma}(y)$}.
\end{equation*}
By taking $\delta \to 0$ and $\gamma \to 0$, we obtain
\begin{equation*}
|\Delta^{\e}_e G_{\Omega,a^{\ast}_{\e}}|\leq |\Delta^{\e}_e
G_{\Omega}|+|\Delta^{\e}_e {\Gamma}_{\gamma}|\qquad \mbox{in $\Omega_{a^{\ast}_{\e}}$}.
\end{equation*}
Since
\begin{equation*}
\vp_{\epsilon}(x)=\int_{\Omega_{a^{\ast}_{\epsilon}}}G_{\Omega,a^{\ast}_{\epsilon}}(x,y)\vp_{\epsilon}^p(y)
dy,
\end{equation*}
we get
\begin{equation*}
\begin{aligned}
|\Delta_{e}^{\epsilon}\vp_{\epsilon}(x)|& \leq
\left|\int_{\Omega_{a^{\ast}_{\epsilon}}}\Delta_{e}^{\epsilon}G_{\Omega,a^{\ast}_{\e}}(x,y)\cdot
\vp_{\epsilon}^p(y)dy\right| \\
&\leq
\int_{\Omega_{a^{\ast}_{\epsilon}}}\left(|\Delta_{e}^{\epsilon}G_{\Omega}(x,y)|+|\Delta^{\e}_e {\Gamma}_{\gamma}|\right)\vp_{\epsilon}^p(y)dy.
\end{aligned}
\end{equation*}
Since $\lim_{\e \to 0}|\Delta^{\e}_e G_{\Omega}|=|\nabla_e
G_{\Omega}|$ and $|\nabla G_{\Omega}(x,y)|\approx O(|x-y|^{1-n})$ as
$|x-y|\to 0$, we get, sufficiently small $\e>0$,
\begin{equation*}
|\Delta_{e}^{\epsilon}\vp_{\epsilon}(x)| \leq
C\int_{\Omega_{a^{\ast}_{\epsilon}}}(|x-y|^{1-n}+1)\vp_{\epsilon}^p(y)dy<\infty
\end{equation*}
and lemma follows.
\end{proof}
\subsection{Almost Flatness}\label{sec-1}

\begin{lemm}\label{lem-2}
Set $a_{\epsilon}=(\frac{\epsilon a^{\ast}_{\epsilon}}{2})^{1/2}$.
Then
\begin{equation*}
\osc_{\partial
B_{a_{\epsilon}}(m)}\vp_{\epsilon}=o({\epsilon}^{\gamma})
\end{equation*}
for $m \in \epsilon \mathbb{Z}^n\cap \Omega$ and for some
$0<\gamma<1$.
\end{lemm}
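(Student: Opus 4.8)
The plan is to carry over the rescaling argument of Lemma~\ref{lem-osc-p} to the elliptic eigenvalue problem $(\textrm{EV}_\e)$; the two new features are that $\vp_\e$ is not harmonic but solves an equation with a small bounded reaction term, and that one must control the possible spike of $\vp_\e$ near the hole $B_{a^*_\e}(m)$. Fix $m\in\e\Z^n\cap\Omega$ (take $n\ge 3$; $n=2$ is identical with logarithmic capacity) and set $v_\e(x)=\vp_\e(a_\e x+m)$. From $a_\e=(\tfrac{\e a^*_\e}{2})^{1/2}$ one gets $\tfrac{\e}{a_\e}=\sqrt2\,\e^{-1/(n-2)}\ra\infty$ and $\tfrac{a^*_\e}{a_\e}=\sqrt2\,\e^{1/(n-2)}\ra 0$, so $v_\e$ is defined on the large annulus $A_\e:=B_{\e/(2a_\e)}(0)\bs B_{a^*_\e/a_\e}(0)$, it vanishes on the inner sphere $\partial B_{a^*_\e/a_\e}(0)$, it satisfies $0\le v_\e\le M:=\sup_\e\|\vp_\e\|_{L^\infty(\Omega)}<\infty$ (finiteness being recorded from the energy bound in the previous lemma together with $0<p<1$), and it solves $\La v_\e+a_\e^2 v_\e^p=0$ in $A_\e$ with $\|a_\e^2 v_\e^p\|_{L^\infty}\le a_\e^2 M^p=\tfrac{\e a^*_\e}{2}M^p$. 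Since $a^*_\e/a_\e\ll 1\ll \e/(2a_\e)$, the unit sphere $\partial B_1(0)$ lies inside $A_\e$, and $\osc_{\partial B_{a_\e}(m)}\vp_\e=\osc_{\partial B_1(0)}v_\e$.

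Next I would take a harmonic replacement on the \emph{full} ball $B_R(0)$, $R:=\e/(2a_\e)$ --- this is precisely what makes the hole disappear, as in Lemma~\ref{lem-osc-p}. Let $w_\e$ be harmonic in $B_R(0)$ with $w_\e=v_\e$ on $\partial B_R(0)$, so $|w_\e|\le M$ in $B_R(0)$; let $z^{(1)}_\e$ solve $\La z^{(1)}_\e=-a_\e^2 v_\e^p$ in $B_R(0)$ with $z^{(1)}_\e=0$ on $\partial B_R(0)$, so $\|z^{(1)}_\e\|_{L^\infty}\le CR^2a_\e^2M^p=C\e^2M^p$. Then $z^{(2)}_\e:=v_\e-w_\e-z^{(1)}_\e$ is harmonic in $A_\e$, vanishes on $\partial B_R(0)$, and on $\partial B_{a^*_\e/a_\e}(0)$ equals $-w_\e-z^{(1)}_\e$, of modulus $\le 2M$. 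Comparing $z^{(2)}_\e$ with the radial harmonic functions $\pm 2M\,\dfrac{|x|^{2-n}-R^{2-n}}{(a^*_\e/a_\e)^{2-n}-R^{2-n}}$ and evaluating at $|x|=1$ yields $\sup_{\partial B_1(0)}|z^{(2)}_\e|\le CM\,(a^*_\e/a_\e)^{n-2}=C'M\e$. Hence $\osc_{\partial B_1(0)}(v_\e-w_\e)\le C(\e^2+\e)=O(\e)$. This is the step that uses the critical balance $a_\e^2=\tfrac{\e a^*_\e}{2}$: with this choice both the reaction term and the spike contribute less than any fixed power $\e^\gamma$, $\gamma<1$.

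Finally I would run the oscillation--decay iteration for $w_\e$. Since $w_\e$ is harmonic in $B_R(0)$, the standard consequence of the interior Harnack inequality, $\osc_{B_r(0)}w_\e\le\delta_0\,\osc_{B_{4r}(0)}w_\e$ for a fixed $\delta_0=1-\tfrac1{C_1}\in(0,1)$, may be iterated from $r=1$ to $r=R$; the number of quadruplings is $\log_4 R\sim\tfrac1{n-2}\log_4(1/\e)$ and $\osc_{B_R(0)}w_\e\le 2M$, so $\osc_{B_1(0)}w_\e\le CM\,\e^{\gamma_0}$ with $\gamma_0=\dfrac{\log_4\delta_0^{-1}}{n-2}$, the same exponent as in Lemma~\ref{lem-osc-p}. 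Combining the estimates above, $\osc_{\partial B_1(0)}v_\e\le C(\e^{\gamma_0}+\e)$, which is $o(\e^\gamma)$ for any $0<\gamma<\min\{1,\gamma_0\}$; undoing the scaling gives $\osc_{\partial B_{a_\e}(m)}\vp_\e=o(\e^\gamma)$. The only genuine difficulty beyond bookkeeping is the middle paragraph --- showing that the fundamental-solution-type spike of $\vp_\e$ near the hole (discussed in the introduction) contributes only $O(\e)$ at unit distance from $m$; for this the harmonic comparison in the annulus together with the exact size $a^*_\e/a_\e\sim\e^{1/(n-2)}$ is essential, and one also needs the uniform bound $M$, which should be inherited from the energy estimate in the preceding lemma.
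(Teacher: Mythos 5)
Your proposal is correct and follows essentially the same route as the paper: rescale by $a_{\e}=(\e a^{\ast}_{\e}/2)^{1/2}$, take a harmonic replacement to kill the hole, bound the contribution of the reaction term $a_{\e}^2v_{\e}^p$ by $O(\e^2)$ (your $z^{(1)}_{\e}$ plays the role of the paper's quadratic barrier $h(r)$), bound the spike near the hole by $O((a^{\ast}_{\e}/a_{\e})^{n-2})=O(\e)$ via radial harmonic comparison, and run the Harnack oscillation-decay iteration to get the exponent $\gamma_0=\frac{\log_4\delta_0^{-1}}{n-2}$. The only difference is that you unpack the oscillation estimate for the harmonic replacement explicitly, whereas the paper delegates it to Lemma 3.5 of \cite{CL} (the analogue of Lemma \ref{lem-osc-p}); the content is the same.
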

\begin{proof}
If we consider the scaled function
$v_{\epsilon}(x)=\vp_{\epsilon}(a_{\epsilon}x+m)$, $v_{\e}$ will be
bounded in $B_{\epsilon/2a_{\epsilon}} \backslash
B_{a^{\ast}_{\epsilon}/a_{\epsilon}}$ and $v_{\epsilon}=0$ on
$\partial B_{a^{\ast}_{\epsilon}/a_{\epsilon}}$. $v_{\epsilon}$ also
satisfies
\begin{equation*}
\La v_{\epsilon}=-a^2_{\epsilon}v^p_{\epsilon}.
\end{equation*}
Let $g_{\epsilon}$ be a harmonic replacement of $v_{\epsilon}$ in
$B_{\epsilon/2a_{\epsilon}} \backslash
B_{a^{\ast}_{\epsilon}/a_{\epsilon}}$. Following the proof of Lemma
3.5 in \cite{CL} which is similar to Lemma \ref{lem-osc-p}, we
get $\osc_{\partial B_1}g_{\epsilon}=o(\epsilon^{\gamma})$. Let's consider
\begin{equation*}
h(r)=\frac{M^p
a^2_{\epsilon}}{2n}\left(\frac{\epsilon^2}{4a^2_{\epsilon}}-r^2\right)
\end{equation*}
with $r=|x|$ and $M=\sup v_{\epsilon}$. Then
\begin{equation*}
\begin{aligned}
\La h&=-a^2_{\epsilon}M^p\leq -a^2_{\epsilon}v^p_{\epsilon}=\La
(v_{\epsilon}-g_{\epsilon}) \qquad \mbox{in
$B_{\epsilon/2a_{\epsilon}}
\backslash B_{a^{\ast}_{\epsilon}/a_{\epsilon}}$}\\
h &\geq 0=v_{\epsilon}-g_{\epsilon} \qquad \qquad \qquad \qquad
\qquad \mbox{on $\partial \left\{B_{\epsilon/2a_{\epsilon}}
\backslash B_{a^{\ast}_{\epsilon}/a_{\epsilon}}\right\}$}.
\end{aligned}
\end{equation*}
By the maximum principle, we get
\begin{equation*}
g_{\epsilon}\leq v_{\epsilon}\leq g_{\epsilon}+h \leq
g_{\epsilon}+C\epsilon^2
\end{equation*}
for some $C>0$ on $\partial B_1$. Thus
\begin{equation*}
\osc_{\partial B_1}v_{\e}\leq \osc_{\partial B_1}g_{\e}+C\e^2 \leq
o(\e^{\gamma}).
\end{equation*}
 If we rescale $v_{\e}$ back to $\vp_{\e}$, we can get the desired conclusion.
\end{proof}
\begin{lemm}
Set $a_{\epsilon}=(\frac{\epsilon a^{\ast}_{\epsilon}}{2})^{1/2}$.
Then
\begin{equation*}
\osc_{B_{\frac{\epsilon}{2}}(m)\backslash
B_{a_{\epsilon}}(m)}\vp_{\epsilon}=o({\epsilon}^{\tilde{\gamma}})
\end{equation*}
for $m \in \epsilon \mathbb{Z}^n\cap \Omega$ and for some
$0<\tilde{\gamma}<1$.
\end{lemm}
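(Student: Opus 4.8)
This is the ``almost flatness on the full cell'' companion of Lemma \ref{lem-2} (which controlled $\vp_\epsilon$ only on the sphere $\partial B_{a_\epsilon}(m)$), and it parallels Lemma \ref{lem-osc} of the heat case. The plan is to imitate that proof, replacing the least\,-super\,-solution / obstacle argument used there by the comparison principle for $\La\,\cdot\,=-\vp_\epsilon^p$.

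\emph{Step 1 (the envelope).} By Lemma \ref{lem-2}, $\osc_{\partial B_{a_\epsilon}(m)}\vp_\epsilon=o(\epsilon^\gamma)$, so set $\tilde\vp_\epsilon(m):=\sup_{\partial B_{a_\epsilon}(m)}\vp_\epsilon$ for $m\in\epsilon\Z^n\cap\Omega$. Applying the difference quotient $\Delta^\epsilon_{e_i}$ to $\vp_\epsilon$ and using the discrete gradient estimate $\|\Delta^\epsilon_e\vp_\epsilon\|\le C$ (a point of $\partial B_{a_\epsilon}(m)$ is carried by $\pm\epsilon e_i$ onto a point of $\partial B_{a_\epsilon}(m\pm\epsilon e_i)$) gives $|\tilde\vp_\epsilon(m)-\tilde\vp_\epsilon(m')|\le C\epsilon$ for neighbouring cells, while near $\partial\Omega$ the zero boundary data forces $\tilde\vp_\epsilon(m)\le C\,\mathrm{dist}(m,\partial\Omega)$. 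Hence $\tilde\vp_\epsilon$ extends to a function on $\Omega$ that is $C$-Lipschitz uniformly in $\epsilon$; and since $a_\epsilon=(\epsilon a^*_\epsilon/2)^{1/2}\ll\epsilon^\gamma$, Lemma \ref{lem-2} yields $|\vp_\epsilon-\tilde\vp_\epsilon|=o(\epsilon^\gamma)$ on $\cup_m\partial B_{a_\epsilon}(m)$.

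\emph{Step 2 (removing the reaction term).} On $\Omega_{a_\epsilon}=\Omega\bs\cup_m\overline{B_{a_\epsilon}(m)}$, which contains no critical hole of $\vp_\epsilon$ since the holes $T_{a^*_\epsilon}$ lie strictly inside the removed balls, let $\Psi_\epsilon$ solve $\La\Psi_\epsilon=-\vp_\epsilon^p$ in $\Omega_{a_\epsilon}$ with $\Psi_\epsilon=\tilde\vp_\epsilon$ on $\cup_m\partial B_{a_\epsilon}(m)$ and $\Psi_\epsilon=0$ on $\partial\Omega$. Then $\vp_\epsilon-\Psi_\epsilon$ is harmonic in $\Omega_{a_\epsilon}$ with boundary data of size $o(\epsilon^\gamma)$, so $\|\vp_\epsilon-\Psi_\epsilon\|_{L^\infty(\Omega_{a_\epsilon})}=o(\epsilon^\gamma)$ by the maximum principle, and it remains to bound $\osc_{B_{\epsilon/2}(m)\bs B_{a_\epsilon}(m)}\Psi_\epsilon$.

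\emph{Step 3 (trapping by the fat perforation).} This is the main obstacle. The radii $a_\epsilon\sim\epsilon^{(n-1)/(n-2)}$ have exponent $(n-1)/(n-2)<\alpha_*$, so by Lemma \ref{lem-5} (case $\alpha<\alpha_*$) the correctors $w_\epsilon$ tend to $1$; quantitatively, the barrier $h_\epsilon(x)=k\sup_m\big[\tfrac{\epsilon^\beta}{|x-m|^{\beta-2}}-\tfrac{\epsilon^\beta}{a_\epsilon^{\beta-2}}\big]$ with $\beta>n$, used exactly as in the proof of Lemma \ref{lem-3} (now with the additional, uniformly bounded term $-\vp_\epsilon^p$, which contributes only $O(\epsilon^2)$ at the cell scale), provides two-sided barriers showing $|\Psi_\epsilon-\tilde\vp_\epsilon|\le C\epsilon^\beta/a_\epsilon^{\beta-2}+C\epsilon^2$ in $\Omega_{a_\epsilon}$. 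For $n\ge3$ one has $\epsilon^\beta/a_\epsilon^{\beta-2}=\epsilon^{(2n-2-\beta)/(n-2)}$, which for $\beta$ slightly larger than $n$ is $o(\epsilon^{\tilde\gamma})$ for a suitable $\tilde\gamma\in(0,1)$ (the case $n=2$ is easier, $a_\epsilon$ being super-polynomially small). Together with $\osc_{B_{\epsilon/2}(m)}\tilde\vp_\epsilon\le C\epsilon$ this gives $\osc_{B_{\epsilon/2}(m)\bs B_{a_\epsilon}(m)}\Psi_\epsilon=o(\epsilon^{\tilde\gamma})$, hence
$$\osc_{B_{\epsilon/2}(m)\bs B_{a_\epsilon}(m)}\vp_\epsilon\le\osc_{B_{\epsilon/2}(m)\bs B_{a_\epsilon}(m)}\Psi_\epsilon+2\|\vp_\epsilon-\Psi_\epsilon\|_{L^\infty(\Omega_{a_\epsilon})}=o(\epsilon^{\tilde\gamma})$$
after shrinking $\tilde\gamma$ to absorb the $o(\epsilon^\gamma)$ term. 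The delicate point throughout is Step 3: the trapping estimate for $\Psi_\epsilon$ must be uniform up to the spiky boundary $\cup_m\partial B_{a_\epsilon}(m)$ of $\Omega_{a_\epsilon}$, which is exactly where the explicit barriers and the $\alpha<\alpha_*$ analysis of Section \ref{sec-obstacle} are needed; the reaction term $-\vp_\epsilon^p$, being bounded, is harmless.
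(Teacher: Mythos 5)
Your argument is correct in outline and rests on the same two pillars as the paper's: the sphere--oscillation estimate of Lemma \ref{lem-2} to flatten the boundary data on $\partial B_{a_\e}(m)$, and a periodic barrier whose size outside the enlarged balls is exactly $\e^{\beta}/a_\e^{\beta-2}=\e^{(2n-2-\beta)/(n-2)}=\e^{2-\frac{\beta-2}{n-2}}$ --- the very bound the paper quotes for $|1-w_\e|$ in $\mathbb{R}^n_{a_\e}$. The packaging differs: the paper skips your intermediate function $\Psi_\e$ and directly sandwiches $\vp_\e$ between $\tilde w_\e=[1-w_\e]+L'|x-m|^2+M'+N'\e$ and $\overline w_\e=[w_\e-1]-l'|x-m|^2+m'-n'\e$, with $M'=\sup_{\partial B_{a_\e}(m)}\vp_\e$ and $m'=\inf_{\partial B_{a_\e}(m)}\vp_\e$, the quadratic absorbing both the reaction term $-\vp_\e^p$ and the cell-to-cell variation of the data, and $M'-m'=o(\e^{\gamma})$ by Lemma \ref{lem-2}. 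Your route buys a cleaner separation of the three error sources (sphere oscillation, reaction term, corrector tail) at the cost of an extra comparison; the mechanism and the resulting exponent are the same.

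The one step that does not go through verbatim is Step 3. You import the barrier of Lemma \ref{lem-3} ``exactly,'' but that construction relies on the linear term $\D\vp(x_0,t_0)\cdot(x-x_0)$ of a \emph{smooth} obstacle to keep the comparison function below the data on every sphere $\partial B_{a_\e}(m')$ met inside the fixed ball $B_{\rho_0}(x_0)$. Your datum $\tilde\vp_\e$ is only uniformly Lipschitz, so no supporting linear function is available, and the concave quadratic $-\frac{c_0k}{4n}|x-x_0|^2$ does not dominate a Lipschitz deviation $C|x-x_0|$ for $|x-x_0|$ small unless one accepts an additional additive error of order $C^2/k$. This is curable --- let $k=k(\e)\to\infty$ slowly, or, as the paper does, push the cell-to-cell variation into the slack terms $N'\e$, $n'\e$ together with a large quadratic coefficient $L'$ --- but as written the trapping estimate $|\Psi_\e-\tilde\vp_\e|\le C\e^{\beta}/a_\e^{\beta-2}+C\e^2$ is not an immediate consequence of Lemma \ref{lem-3}, and you should supply the modified barrier explicitly.
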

\begin{proof}
By the Section 2.1 in \cite{CL}, there is a periodic corrector
$w_{\epsilon}$ having properties
\begin{equation*}
\La w_{\epsilon}=k \quad \mbox{and} \quad |1-w_{\epsilon}|\leq
C{\epsilon}^{2-\frac{\beta-2}{n-2}} \qquad \mbox{in
$\mathbb{R}^n_{a_{\epsilon}}$}
\end{equation*}
for $k>0$ and $n<\beta<2(n-1)$. Let $L', N'>0$ be the constants to be determined later. We define
the barrier function
\begin{equation*}
\tilde{w}_{\epsilon}(x)=\big[1-w_{\epsilon}(x)\big]+L'|x-m|^2+M'+N'\epsilon
\end{equation*}
with
\begin{equation*}
M'=\sup_{\partial B_{a_{\epsilon}}(m)}\vp_{\epsilon}.
\end{equation*}
Then we can select sufficiently large numbers $k, L'\gg 1$ and $N'$
so that $\tilde{w}_{\epsilon}$ satisfies
\begin{equation*}
\begin{aligned}
\La \tilde{w}_{\epsilon}&\leq\La \vp_{\epsilon}\qquad \qquad \qquad \mbox{in $\Omega_{a_{\epsilon}}$}\\
\tilde{w}_{\epsilon}&\geq \vp_{\epsilon} \qquad \qquad \qquad \,\,
\mbox{on $\partial \Omega_{a_{\epsilon}}$}.
\end{aligned}
\end{equation*}
By the comparison principle, we get
\begin{equation*}
\vp_{\epsilon} \leq \tilde{w}_{\epsilon} \qquad \qquad \mbox{in
$\Omega_{a_{\epsilon}}$}
\end{equation*}
Similarly, we get
\begin{equation*}
\overline{w}_{\epsilon} \leq \vp_{\epsilon} \qquad \qquad \mbox{in
$\Omega_{a_{\epsilon}}$}
\end{equation*}
where
\begin{equation*}
\overline{w}_{\epsilon}=\big[w_{\epsilon}(x)-1\big]-l'|x-m|^2+m'-n'\epsilon, \qquad m'=\inf_{\partial B_{a_{\epsilon}}(m)}\vp_{\epsilon}
\end{equation*}
for sufficiently large numbers $l', n'>0$. Since $B_{\frac{\epsilon}{2}}(m)\backslash B_{a_{\epsilon}}(m)$ is a
small region, we get
\begin{equation*}
|\tilde{w}_{\epsilon}-\overline{w}_{\epsilon}|\leq
o(\epsilon^{\overline{\gamma}}) \qquad \qquad \mbox{in
$B_{\frac{\epsilon}{2}}(m)\backslash B_{a_{\epsilon}}(m)$}
\end{equation*}
with $0<\overline{\gamma}<1$ and lemma follows.
\end{proof}
\subsection{Correctibility Condition I}
Likewise highly oscillating obstacle problems, we need an
appropriate corrector. However, unlike the highly oscillating obstacle
problem, the corrector $\tilde{w}_{\e}$ should be a super-harmonic
with
\begin{equation*}
\tilde{w}_{\e}=0 \qquad \qquad \mbox{in $T_{a^{\ast}_{\e}}$}.
\end{equation*}
Since the solution $w_{\epsilon}$ of \eqref{eq-cases-definition-of-corrector-cite-by-CL-5} is sub-harmonic with $w_{\epsilon}=1$ in $T_{a^{\ast}_{\e}}$, it is natural to consider the function
\begin{equation*}
\tilde{w}_{\e}=b-bw_{\epsilon}
\end{equation*}
for some constant $b>0$.
\begin{lemm}\label{lemma-correctibility-condition-1-0000000000}
\item  Let $k_{b,\epsilon}$ be such that
\begin{equation*}
\La (b-bw_{\epsilon})+(b-bw_{\epsilon})^p=-b\La
w_{\epsilon}+(b-bw_{\epsilon})^p=k_{b,\epsilon}.
\end{equation*}
Then we have
\begin{equation*}
-b\kappa_{_{B_{r_0}}}=k_b-b^p
\end{equation*}
where $k_{b}=\lim_{\epsilon\to 0}k_{b,\epsilon}$ and $\kappa_{_{B_{r_0}}}$ is the harmonic capacity of $B_{r_0}$.
\end{lemm}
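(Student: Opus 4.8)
The plan is to reduce the identity to two facts already available for the linear corrector $w_{\e}$ of \eqref{eq-cases-definition-of-corrector-cite-by-CL-5}: that on the perforated region $\La w_{\e}$ is \emph{exactly} the constant appearing in that equation, and that $1-w_{\e}$ converges to $1$. First I recall that, at the critical rate $a^{\ast}_{\e}=\e^{\alpha_{\ast}}$, the corrector relevant here is the one of Lemma~\ref{lem-5} for which $\liminf w_{\e}=0$; by that lemma this pins the constant in \eqref{eq-cases-definition-of-corrector-cite-by-CL-5} to $k=\mathrm{cap}(B_{1})=\kappa_{B_{r_{0}}}$ (here $r_{0}=1$, since the holes $T_{a^{\ast}_{\e}}$ have radius $a^{\ast}_{\e}$; cf.\ the Remark following Theorem~\ref{thm-i}). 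In particular $\La w_{\e}\equiv\kappa_{B_{r_{0}}}$ holds pointwise in $\mathbb{R}^{n}_{a^{\ast}_{\e}}$, while $w_{\e}\equiv 1$ on $T_{a^{\ast}_{\e}}$, so $\tilde w_{\e}=b-bw_{\e}\equiv 0$ on the holes. Moreover, by the lemma of Section~\ref{sec-obstacle} asserting $1-w_{\e}\wl 1$ in $H^{1}_{\mathrm{loc}}(\mathbb{R}^{n})$, Rellich's theorem gives $1-w_{\e}\to 1$ strongly in $L^{2}_{\mathrm{loc}}$, hence $w_{\e}\to 0$ a.e.\ along a subsequence; the barrier bounds of \cite{CL} provide in addition the uniform bound $0\le w_{\e}\le 1$ (up to a negligible error), so that $(1-w_{\e})^{p}$ is well defined and bounded.

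With these ingredients the computation is immediate. On $\mathbb{R}^{n}_{a^{\ast}_{\e}}$,
\begin{equation*}
k_{b,\e}=\La(b-bw_{\e})+(b-bw_{\e})^{p}=-b\,\La w_{\e}+b^{p}(1-w_{\e})^{p}=-b\,\kappa_{B_{r_{0}}}+b^{p}(1-w_{\e})^{p},
\end{equation*}
while $k_{b,\e}\equiv 0$ on $T_{a^{\ast}_{\e}}$, whose volume fraction $|T_{a^{\ast}_{\e}}\cap Q^{\e}_{m}|/\e^{n}\approx (a^{\ast}_{\e})^{n}/\e^{n}$ tends to $0$. Since $0\le 1-w_{\e}\le 1$ and $w_{\e}\to 0$ a.e., dominated convergence yields $(1-w_{\e})^{p}\to 1$ in $L^{1}_{\mathrm{loc}}$, and hence $k_{b,\e}\to b^{p}-b\,\kappa_{B_{r_{0}}}$ in $L^{1}_{\mathrm{loc}}(\mathbb{R}^{n})$ (equivalently pointwise a.e.\ off the holes). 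Therefore $k_{b}:=\lim_{\e\to 0}k_{b,\e}=b^{p}-b\,\kappa_{B_{r_{0}}}$, which is exactly the asserted identity $-b\,\kappa_{B_{r_{0}}}=k_{b}-b^{p}$.

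The only genuinely delicate point is the first step: one must use the corrector normalized so that $1-w_{\e}\to 1$ (equivalently $\liminf w_{\e}=0$), which is precisely the statement fixing the constant in \eqref{eq-cases-definition-of-corrector-cite-by-CL-5} equal to the capacity $\kappa_{B_{r_{0}}}$; a generic $k$ would give the limit $b^{p}-bk$, and only $k=\kappa_{B_{r_{0}}}$ produces the effective reaction defect that will later match the strange term in the homogenized porous-medium equation. It is also worth being explicit about the sense of the limit: $k_{b}$ is the $L^{1}_{\mathrm{loc}}$ limit of the \emph{classical} expression $\La\tilde w_{\e}+\tilde w_{\e}^{\,p}$ on $\mathbb{R}^{n}_{a^{\ast}_{\e}}$, which is all that is needed when $\tilde w_{\e}$ is subsequently used to correct the limit $\vp$ away from $T_{a^{\ast}_{\e}}$; no distributional contribution carried by $\partial T_{a^{\ast}_{\e}}$ enters this particular identity. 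The remaining points — the uniform bound on $w_{\e}$, the vanishing of $|T_{a^{\ast}_{\e}}|$, and $w_{\e}\to 0$ a.e.\ — are routine consequences of the construction in \cite{CL} and of the weak-$H^{1}$ convergence of $1-w_{\e}$.
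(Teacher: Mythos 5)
Your argument is correct in substance, but it takes a genuinely different route from the paper's. The paper rescales, setting $v_{\e}(x)=w_{\e}(a^{\ast}_{\e}x+m)$, integrates the defining identity over the punctured cell $Q_0^{\e/a^{\ast}_{\e}}\setminus B_1$, converts $-b\int\La v_{\e}\,dx$ into a boundary flux on $\partial B_1$ via the divergence theorem, and identifies the limiting flux with $-b\kappa_{B_1}$ using the $C^2$-convergence of $v_{\e}$ to the capacitary potential of $B_1$; the right-hand side is handled by observing that $\int(1-v_{\e})^p\,dx$ is asymptotic to the cell volume, and the factor $r_0^{n-2}$ is inserted at the end. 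You instead read $\La w_{\e}\equiv k$ directly from \eqref{eq-cases-definition-of-corrector-cite-by-CL-5}, appeal to Lemma \ref{lem-5} to pin $k=\mathrm{cap}(B_1)$ for the normalized critical corrector, and treat the zeroth-order term via the weak-$H^1$ convergence of $1-w_{\e}$, Rellich, and dominated convergence. Your version is shorter and has the merit of being explicit about the fact that $k_{b,\e}=-bk+b^p(1-w_{\e})^p$ is \emph{not} a constant (the paper tacitly treats it as one inside an integral), and about the sense ($L^1_{loc}$, or a.e.\ off the holes) in which $k_b$ exists — which is the form actually needed in the proof of Theorem \ref{thm-eigen}. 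What you give up is the self-contained capacity computation: the divergence-theorem step is exactly where the paper \emph{derives}, rather than quotes, the identification of the critical constant with the capacity, and it is also what produces the $r_0^{n-2}$ scaling for general $r_0=\lim_{\e\to0}a_{\e}/a^{\ast}_{\e}$. As written, your proof covers only $r_0=1$; for $c_0a^{\ast}_{\e}\le a_{\e}\le C_0a^{\ast}_{\e}$ one should use the corrector adapted to holes of radius $a_{\e}$, whose critical constant is $\kappa_{B_{r_0}}=r_0^{n-2}\mathrm{cap}(B_1)$ — with that replacement your computation yields the stated identity directly, with no final multiplication by $r_0^{n-2}$. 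This is a normalization issue the paper itself glosses over, not a genuine gap.
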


\begin{proof}
Set $v_{\epsilon}(x)=w_{\epsilon}(a^{\ast}_{\epsilon}x+m)$, then
$v_{\epsilon}$ satisfies
\begin{equation*}
\begin{cases}
-b\La
v_{\epsilon}+(a^{\ast}_{\epsilon})^2(b-bv_{\epsilon})^p=k_{b,\epsilon}(a^{\ast}_{\epsilon})^2
\qquad \qquad \mbox{in
$Q^{^{\frac{\epsilon}{a^{\ast}_{\epsilon}}}}_0\backslash B_1$}\\
v_{\epsilon}=1 \qquad \qquad \qquad \qquad \qquad \qquad \qquad
\qquad \mbox{on $\partial
B_1$}\\
v_{\epsilon}=|\nu\cdot\nabla v_{\epsilon}|=0 \qquad \qquad \qquad \qquad
\qquad \qquad \mbox{on $\partial
Q^{^{\frac{\epsilon}{a^{\ast}_{\epsilon}}}}_0$}.
\end{cases}
\end{equation*}

Thus, we have
\begin{equation*}
-b\int_{_{Q^{^{\frac{\epsilon}{a^{\ast}_{\epsilon}}}}_0\backslash
B_1}}\La
v_{\epsilon}dx=(a^{\ast}_{\epsilon})^2\int_{_{Q^{^{\frac{\epsilon}{a^{\ast}_{\epsilon}}}}_0\backslash
B_1}}k_{b,\epsilon}-b^p(1-v_{\epsilon})^pdx .
\end{equation*}
On the other hand, from the elliptic uniform estimates, \cite{GT}, $v_{\e}\ra v$ converges to a 
potential function $v$ of $B_1$ in $C^2$-norm on any bounded set  where  $\La u=0$ on $\R^n\bs B_1$, $v=1$ on
$\partial B_1$, and $v\ra 0$ as $|x|\ra \infty$.  Then  we get
\begin{equation*}
\begin{split}
-b&\int_{_{Q^{^{\frac{\epsilon}{a^{\ast}_{\epsilon}}}}_0\backslash
B_1}}\La v_{\epsilon}dx=-b\int_{_{\partial
\left\{Q^{^{\frac{\epsilon}{a^{\ast}_{\epsilon}}}}_0\backslash
B_1\right\}}}\nabla v_{\e}\cdot\nu
d\sigma_{x}\\
&  =-b\int_{_{\partial B_1}}\nabla v_{\e}\cdot (-\nu) d\sigma_{x}\ra -b\int_{_{\partial B_1}}\nabla v\cdot (-\nu) d\sigma_{x}=-b\kappa_{_{B_1}}.
\end{split}
\end{equation*}
as $\epsilon$ goes to zero. And  we also have
\begin{equation}\label{eq-3-4-1}
-b\kappa_{_{B_1}}=\lim_{\e \to
0}\big[(k_{b,\e}-b^p)(a^{\ast}_{\e})^2\cdot(\frac{\e}{a^{\ast}_{\e}})^n\big]
=\frac{1}{r^{n-2}_0}(k_b-b^p)
\end{equation}
where $\kappa_{_{B_{1}}}$ is the harmonic capacity of $B_{1}$ and $k_{b}=\lim_{\epsilon\to 0}k_{b,\epsilon}$. If we
multiply equation (\ref{eq-3-4-1}) by $r^{n-2}_0$, we obtain
\begin{equation*}
-b\kappa_{_{B_{r_0}}}=k_b-b^p
\end{equation*}
where $\kappa_{_{B_{r_0}}}$ is the harmonic capacity of $B_{r_0}$.
\end{proof}
\subsection{Homogenized Equation}
\begin{theo}\label{thm-eigen}
\item \begin{enumerate}
\item(The Concept of Convergence)\\
There is a continuous function $\vp$ such that $\vp_{\epsilon}\to
\vp$ in $\Omega$ with respect to $L^q$-norm, for $q>0$. and for any
$\delta>0$, there is a subset $D_{\delta} \subset \Omega$ and
$\epsilon_0$ such that, for $0<\epsilon<\epsilon_0$,
$\vp_{\epsilon}\to \vp$ uniformly in $D_{\delta}$ as $\epsilon \to
0$
and $|\Omega \backslash D_{\delta}|<\delta$.\\
\item Let $a^{\ast}_{\epsilon}=\epsilon^{{\alpha}_{\ast}}$ for
${\alpha}_{\ast}=\frac{n}{n-2}$ for $n \geq 3$ and
$a^{\ast}_{\epsilon}=e^{-\frac{1}{\epsilon^2}}$ for $n=2$. Then for
$c_0a_{\epsilon}^{\ast} \leq a_{\epsilon} \leq
C_0a_{\epsilon}^{\ast}$, $u$ is a viscosity solution of
\begin{equation}\label{eq-in-thm-main-eigenvalue-homogenized-equation-345}
\begin{cases}
\La \vp -\kappa_{B_{r_0}}\vp+\vp^p=0 \qquad \qquad \quad 
\mbox{in
$\Omega$}\\
\qquad \vp=0 \qquad \qquad \qquad \quad \qquad \quad \mbox{on $\partial \Omega$}\\
\qquad \vp>0 \qquad \qquad \qquad \qquad \qquad \mbox{in $\Omega$}
\end{cases}
\end{equation}
where $\kappa_{B_{r_0}}$ is the capacity of $B_{r_0}$ if
$r_0=\lim_{\epsilon \to
0}\frac{{a}_{\epsilon}}{{a}^{\ast}_{\epsilon}}$ exists.
\end{enumerate}
\end{theo}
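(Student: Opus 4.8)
My plan is to follow the scheme of Theorem~\ref{thm-i}, with the oscillating obstacle replaced by the Dirichlet constraint $\varphi_\epsilon=0$ on $T_{a^{\ast}_\epsilon}$ and the heat operator by the sublinear elliptic operator $v\mapsto\La v+v^{p}$; the one genuinely new ingredient will be the correctibility identity of Lemma~\ref{lemma-correctibility-condition-1-0000000000}, which has to transport the capacity and the reaction term $v^{p}$ to the limit simultaneously.

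\emph{Part (1).} First I would combine the a~priori bound $\varphi_\epsilon\le h^{+}$ and the discrete gradient estimate with the two oscillation lemmas of Subsection~\ref{sec-1} (used at the intermediate scale $a_\epsilon=(\epsilon a^{\ast}_\epsilon/2)^{1/2}$) to conclude that $\{\varphi_\epsilon\}$ is uniformly bounded and equicontinuous on $\Omega\setminus\bigcup_{m}B_{a_\epsilon}(m)$. Taking $D_\delta=\Omega\setminus\bigcup_{\epsilon<\epsilon_0}\bigcup_{m}B_{(\epsilon a^{\ast}_\epsilon/2)^{1/2}}(m)$ exactly as in the proof of Theorem~\ref{thm-i}, one gets $|\Omega\setminus D_\delta|<\delta$ since $a^{\ast}_\epsilon=o(\epsilon)$; Arzel\`a--Ascoli and a diagonalisation over $\delta\to0$ then produce a continuous (in fact Lipschitz) $\varphi$ with $\varphi_\epsilon\to\varphi$ uniformly on each $D_\delta$, whence $\varphi_\epsilon\to\varphi$ in $L^{q}(\Omega)$ for every $q>0$ by the uniform bound. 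Along a further subsequence one may also assume $\varphi_\epsilon\rightharpoonup\varphi$ in $H^{1}_0(\Omega)$, so that $\varphi$ is the weak limit from the nondegeneracy section and, in particular, $\varphi\not\equiv0$.

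\emph{Part (2): the homogenized equation.} Both the sub- and super-solution properties will be obtained by the corrector method of \cite{CL} and Lemma~\ref{lem-6}, adapted to the present equation. Suppose, towards a contradiction, that a quadratic $P$ touches $\varphi$ from above at $x_0\in\Omega$ with $P(x_0)=\varphi(x_0)=:b>0$ and $\La P(x_0)-\kappa_{B_{r_0}}b+b^{p}\le-\mu_0<0$. On a small ball $B_\eta(x_0)$ I would modify $P$ into a quadratic $Q$ and correct it by the rescaled corrector $w_\epsilon$ of (\ref{eq-cases-definition-of-corrector-cite-by-CL-5}) (normalised so that $\La w_\epsilon=k_\epsilon\to\kappa_{B_{r_0}}$, Lemma~\ref{lem-5}), obtaining a competitor $Q_\epsilon$ which (i) is a positive supersolution of $(\textrm{EV}_\epsilon)$ in $B_\eta(x_0)\cap\Omega_{a^{\ast}_\epsilon}$ --- here the strict sign $-\mu_0$ is spent and the bound $Q_\epsilon^{p}\le Q^{p}$ (valid since $0\le w_\epsilon\le1$) controls the reaction term in the favourable direction; (ii) is $\ge0$ on $T_{a^{\ast}_\epsilon}$; and (iii) dominates $\varphi_\epsilon$ on $\partial B_\eta(x_0)$, which is arranged by matching the spike of $Q_\epsilon$ to the spike $\varphi_\epsilon\approx\varphi(\cdot)(1-w_\epsilon)$ of $\varphi_\epsilon$ near each hole, with a small slack $\sigma$, $\eta\ll\sigma\ll\mu_0$, as in \cite{CL}. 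Gluing $v_\epsilon=\min(\varphi_\epsilon,Q_\epsilon)$ in $B_\eta(x_0)$ and $v_\epsilon=\varphi_\epsilon$ elsewhere gives a positive supersolution of $(\textrm{EV}_\epsilon)$ with zero boundary data; since for $0<p<1$ the positive solution of $(\textrm{EV}_\epsilon)$ is unique and is dominated by every such supersolution (monotone iteration from above, sublinear comparison principle in the spirit of Brezis--Oswald), $\varphi_\epsilon\le v_\epsilon\le Q_\epsilon$, and letting $\epsilon\to0$ along points of $D_\delta$ tending to $x_0$ contradicts $Q(x_0)<b=\varphi(x_0)$. The super-solution property is the symmetric argument with a \emph{sub}-solution competitor built from the modified corrector $\tilde w_\epsilon=b-bw_\epsilon$; the decisive point is Lemma~\ref{lemma-correctibility-condition-1-0000000000}, $\lim_\epsilon(\La\tilde w_\epsilon+\tilde w_\epsilon^{p})=b^{p}-b\kappa_{B_{r_0}}$: since near the holes $Q_\epsilon$ follows the profile $b(1-w_\epsilon)$, this identity lets $\La\tilde w_\epsilon$ and $\tilde w_\epsilon^{p}$ fuse into the single coefficient $\kappa_{B_{r_0}}$ of the extra term, and the comparison there uses that the positive solution dominates every positive subsolution with zero boundary data (monotone iteration from below). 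Finally $\varphi>0$ in $\Omega$: since $\varphi\not\equiv0$ and $\La\varphi=\kappa_{B_{r_0}}\varphi-\varphi^{p}=\varphi(\kappa_{B_{r_0}}-\varphi^{p-1})<0$ wherever $\varphi$ is small, an interior zero would force $\varphi\equiv0$ by the strong minimum principle and the connectedness of $\Omega$; and $\varphi=0$ on $\partial\Omega$ is inherited from $\varphi_\epsilon$.

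I expect the main obstacle to be step~(iii) together with the super-solution half of part~(2): transplanting the corrector and $\min/\max$ gluing of \cite{CL} to a sublinear reaction--diffusion equation forces one to reconcile, in the transition region around each hole where $w_\epsilon$ is neither $0$ nor $1$, the capacitary limit of $\La\tilde w_\epsilon$ with the limit of the nonlinear term $\tilde w_\epsilon^{p}$, while keeping the competitor compatible with the constraint $\varphi_\epsilon=0$ on $T_{a^{\ast}_\epsilon}$. Lemma~\ref{lemma-correctibility-condition-1-0000000000} is precisely the identity that resolves this and produces the coefficient $\kappa_{B_{r_0}}$ in the homogenized equation.
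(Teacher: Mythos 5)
Your part (1) and the sub-solution half of part (2) follow the paper's proof closely: the paper likewise takes $D_\delta$ exactly as in Theorem \ref{thm-i}, and proves the sub-solution property by touching $\varphi$ from above with $P$, passing to a nearby quadratic $Q$, correcting with $Q_\epsilon=Q-Q(x_1)w_\epsilon$ (which is $\ge 0$ on $T_{a^\ast_\epsilon}$), invoking Lemma \ref{lemma-correctibility-condition-1-0000000000} to absorb $-Q(x_1)\La w_\epsilon+Q(x_1)^p(1-w_\epsilon)^p$ into $-\kappa_{B_{r_0}}Q(x_1)+Q(x_1)^p$, and comparing on $B_\rho(x_0)\cap\Omega_{a^\ast_\epsilon}$. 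Your cruder bound $Q_\epsilon^p\le Q^p$ also works in this half precisely because the required inequality is one-sided.

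The problem is your ``symmetric'' treatment of the super-solution half. First, no corrector is needed there: since $\La\varphi_\epsilon+\varphi_\epsilon^p=0$ in $\Omega_{a^\ast_\epsilon}$, one has $\La\varphi_\epsilon-\kappa_{B_{r_0}}\varphi_\epsilon+\varphi_\epsilon^p=-\kappa_{B_{r_0}}\varphi_\epsilon\le 0$, and this passes to the limit in the viscosity sense; that one line is the paper's entire argument for this half. Second, and more importantly, the symmetric corrector argument you sketch would not go through: a sub-solution competitor $Q_\epsilon=Q-c\,w_\epsilon$ must satisfy the \emph{pointwise} lower bound $\La Q_\epsilon+Q_\epsilon^p\ge 0$ throughout $B_\eta(x_0)\cap\Omega_{a^\ast_\epsilon}$, but in the transition layer around each hole $(1-w_\epsilon)^p$ collapses to $0$ while $\La w_\epsilon=k$ retains its full strength, so $-c\La w_\epsilon+\bigl(c(1-w_\epsilon)\bigr)^p$ is close to $-ck$ there rather than to $-c\kappa_{B_{r_0}}+c^p$. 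The identity of Lemma \ref{lemma-correctibility-condition-1-0000000000} is an averaged (capacitary) identity; pointwise it gives only the inequality with $\le$, which is exactly why it serves the sub-solution half and cannot be ``fused'' in the opposite direction. Your positivity argument for $\varphi>0$ (superharmonicity where $\varphi$ is small plus the strong minimum principle and connectedness) is sound and is more than the paper records.
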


\begin{proof}
By an argument similar to the proof of Theorem \ref{theo-main-heat-operator-71930}, (1) holds. \\
\indent (2). For $\epsilon>0$,
\begin{equation*}
\La \vp_{\epsilon}-\kappa_{_{B_{r_0}}}
\vp_{\epsilon}+\vp^p_{\epsilon}=-\kappa_{_{B_{r_0}}}
\vp_{\epsilon}\leq 0.
\end{equation*}
Hence, the limit also satisfies
\begin{equation*}
\La \vp-\kappa_{_{B_{r_0}}} \vp+\vp^p\leq 0
\end{equation*}
in a viscosity sense. Thus, we are going to show that $\vp$ is a
sub-solution. Let us assume that there is a parabola $P$ touching $u$
from above at $x_0$ and
\begin{equation*}
\La P-\kappa_{_{B_{r_0}}} P +P^p\leq -2\delta_0<0.
\end{equation*}
In a small neighborhood of $x_0$, $B_{\eta}(x_0)$, there is another
parabola $Q$ such that
\begin{equation*}
\begin{cases}
D^2Q > D^2P \qquad \qquad \qquad \mbox{in $B_{\eta}(x_0)$}\\
Q(x_0)<P(x_0)-\delta \\
Q(x)>P(x) \qquad \qquad \qquad \textrm{on}\,\, \partial
B_{\eta}(x_0).
\end{cases}
\end{equation*}
In addition, for $Q(x_1)=\min_{B_{\eta}(x_0)}Q(x)$,
\begin{equation*}
\La Q-\kappa_{_{B_{r_0}}} Q(x_1)+Q(x_1)^p \leq -\delta_0<0 \quad
\textrm{and} \quad |Q(x)-Q(x_1)|<C\eta
\end{equation*}
in $B_{\eta}(x_0)$. Then the function
\begin{equation*}
Q_{\epsilon}(x)=Q(x)-w_{\epsilon}(x)Q(x_1)
\end{equation*}
satisfies
\begin{equation*}
\La Q_{\epsilon}+Q^p_{\epsilon}\leq\La Q-Q(x_1)\La
w_{\epsilon}+(1-w_{\epsilon})^pQ(x_1)^p+(C\eta)^p
\end{equation*}
in $B_{\eta}(x_0)\cap \Omega_{a_{\epsilon}}$. By correctibility
condition I, Lemma \ref{lemma-correctibility-condition-1-0000000000}, 
\begin{equation*}
\begin{aligned}
\La Q_{\epsilon}+Q^p_{\epsilon}&\leq \La Q+k_{Q(x_1),\epsilon}+(C\eta)^p\\
&\leq \La Q+k_{Q(x_1)}+\frac{\delta_0}{4}+(C\eta)^p\\
&\leq \La Q-\kappa_{_{B_{r_0}}}
Q(x_1)+Q(x_1)^p+\frac{\delta_0}{2}\leq -\frac{\delta_0}{2}<0
\end{aligned}
\end{equation*}
for small $\epsilon, \eta>0$. So $\La Q_{\epsilon}+Q^p_{\epsilon}<0$
and $Q_{\epsilon}\geq u_{\epsilon}$ on $\partial \{B_{\rho}(x_0)\cap
\Omega_{a_{\epsilon}}\}$ for some $\rho>0$. By a comparison principle
, we get
\begin{equation*}
u_{\epsilon}\leq Q_{\epsilon}
\end{equation*}
in $B_{\rho}(x_0)\cap \Omega_{a_{\epsilon}}$. Thus we get
$Q_{\epsilon}(x_0)\geq \vp_{\epsilon}(x_0)$ and then $Q(x_0) \geq
\vp(x_0)$. On the other hand, $Q(x_0)<P(x_0)-\delta<\vp(x_0)$, which
is a contradiction. Therefore $\vp$ is a viscosity solution of \eqref{eq-in-thm-main-eigenvalue-homogenized-equation-345}.
\end{proof}
\section{Porous Medium Equations in a fixed Perforated Domain}\label{sec-por}
Now we can consider the following porous medium equations. The main question is to find the
viscosity solution  $u_{\e}(x,t)$ s.t.
\begin{equation}
\begin{cases}
\La u_{\epsilon}^m -\partial_tu_{\epsilon}=0 \qquad \qquad \mbox{in $Q_{T,a^{\ast}_{\epsilon}}\,\,(=\Omega_{a^{\ast}_{\epsilon}}\times(0,T])$}\\
u_{\epsilon} =0 \qquad \qquad \qquad \qquad \mbox{on $\partial_lQ_{T,a^{\ast}_{\epsilon}}\,\,(=\partial \Omega_{a^{\ast}_{\epsilon}}\times(0,T])$}\\
u_{\e}=g_{\epsilon} \qquad \qquad \qquad \qquad \mbox{on
$\Omega_{a^{\ast}_{\epsilon}}\times\{0\}$}
\end{cases}
\tag{{\bf $PME_{\e}^1$}} \label{eq-main1}
\end{equation}
where $1<m<\infty$ and $g_{\e}(x)= g(x)\xi(x)$ for a smooth function
$g(x) \in C^{\infty}_0(\Omega)$ satisfying
\begin{equation*}
0<\delta_0<|\nabla g|<C \qquad \qquad \mbox{on $\partial \Omega$}
\end{equation*}
and a $\e$-periodic function $\xi(x)\in C^{\infty}$ satisfying
\begin{equation}\label{eq-aligned-epsilon-periodic-in-PME-000}
\begin{aligned}
0 &\leq \xi \leq  1, \qquad \xi=0 \quad \mbox{in
$T_{a^{\ast}_{\e}}$}, \qquad \xi=1 \quad \mbox{in
$\mathbb{R}^n_{{\e}^{\frac{n-1}{n-2}}}$},\\
\La &\xi=0 \quad \mbox{in $\R^N\bs\left\{T_{a^{\ast}_{\e}}\cup\mathbb{R}^n_{{\e}^{\frac{n-1}{n-2}}}\right\}$},\qquad
\La \xi \leq 0, \qquad \Delta^{\e}_e\xi=0.
\end{aligned}
\end{equation}
Set $v_{\epsilon}=u^m_{\epsilon}$ which is a flux. Then $v_{\epsilon}$ satisfies
\begin{equation}
\begin{cases}
v^{1-\frac{1}{m}}_{\epsilon}\La v_{\epsilon} -\partial_tv_{\epsilon}=0 \qquad \qquad \mbox{in $Q_{T_{a^{\ast}_{\epsilon}}}$}\\
v_{\epsilon} =0 \qquad \qquad \qquad \qquad \quad \mbox{on $\partial_l Q_{T_{a^{\ast}_{\epsilon}}}$}\\
v_{\e}=g^m_{\e} \qquad \qquad \qquad \qquad \mbox{on
$\Omega_{a^{\ast}_{\epsilon}}\times\{0\}$}.
\end{cases}
\tag{{\bf $PME_{\e}^2$}} \label{eq-main2}
\end{equation}
In this section, we deal with the properties and homogenization for the solution $v$.
\subsection{Discrete Nondegeneracy}
Let $\varphi_{\epsilon}$ be a solution of the boundary value problem
\begin{equation*}
\begin{cases}
\begin{aligned}
\La \varphi_{\epsilon}&+\varphi_{\epsilon}^{\frac{1}{m}}=0 \qquad \qquad \qquad \mbox{in $\Omega_{a^{\ast}_{\epsilon}}$}\\
\varphi_{\epsilon}&=0 \qquad \qquad \qquad \qquad \mbox{on $\partial
\Omega_{a^{\ast}_{\epsilon}}$}.
\end{aligned}
\end{cases}
\end{equation*}
It is easy to see that the function
\begin{equation*}
V_{\epsilon,\lambda}(x,t)=\frac{\alpha\varphi_{\epsilon}(x)}{(\lambda+t)^{\frac{m}{m-1}}}, \qquad \qquad \alpha=\left(\frac{m}{m-1}\right)^{\frac{m}{m-1}}
\end{equation*}
satisfies the equation
\begin{equation*}
 V_{\epsilon,\lambda}^{1-\frac{1}{m}}\La
V_{\epsilon,\lambda}- (V_{\epsilon,\lambda})_t=0.
\end{equation*} 
As in Lemma \ref{lem-2}, the rescaled function $\vp_{\epsilon}(a_{\epsilon}x+m)$ approach the harmonic function in $B_{\frac{\epsilon}{2a_{\epsilon}}}\bs B_{\frac{a^{\ast}_{\epsilon}}{a_{\epsilon}}}$ as $\epsilon \to 0$. Hence, for sufficiently small $\epsilon>0$, $\vp_{\epsilon}$ becomes almost harmonic near $T_{a^{\ast}_{\epsilon}}$. Thus, $\vp_{\epsilon}$ is equvalent to the $\e$-periodic function $\xi$ in \eqref{eq-aligned-epsilon-periodic-in-PME-000} near $T_{a^{\ast}_{\epsilon}}$, i.e., there exist some constants $0<c\leq C<\infty$ such that
\begin{equation*}
c\vp_{\epsilon}\leq \xi \leq C\vp_{\epsilon} \qquad \mbox{near $T_{a^{\ast}_{\epsilon}}$}.
\end{equation*}
Therefore,  we can take constants
$0<\lambda_2\leq \lambda_1<\infty$ such that
\begin{equation*}
V_{\epsilon,\lambda_1} \leq v_{\epsilon} \leq V_{\epsilon,\lambda_2}
\end{equation*}
for the solution $v_{\epsilon}$ of the initial value problem
(\ref{eq-main2}). Therefore, by the nondegeneracy of
$\varphi_{\epsilon}$ in a neighborhood of $\partial\Omega$, we can get the
following result.
\begin{lemm}\label{lem-4}
\item For each unit direction $e$ and $x \in \partial \Omega$, set
\begin{equation*}
\Delta^{\epsilon}_{e}v_{\e}=\frac{v_{\e}(x+\e e,t)-v_{\e}(x,t)}{\e}.
\end{equation*}
Then there exist suitable constants $c>0$ and $C<\infty$ such that
\begin{equation*}
c<|\Delta^{\epsilon}_{e}v_{\e}(x,t)|<C
\end{equation*}
uniformly.
\end{lemm}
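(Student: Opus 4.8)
The statement follows quickly from two facts already in hand: the two--sided comparison $V_{\epsilon,\lambda_1}\le v_{\epsilon}\le V_{\epsilon,\lambda_2}$ recorded just above, and the discrete gradient bounds for the eigenvalue solution $\varphi_{\epsilon}$ established in Section~\ref{sec-eigen} in the case $p=\tfrac1m\in(0,1)$, namely $c_0\le\|\Delta^{\e}_e\varphi_{\epsilon}(x)\|\le C_0$ for $x\in\partial\Omega$, uniformly in $\e$. The plan is to transfer these bounds from $\varphi_{\epsilon}$ to the flux $v_{\epsilon}$ through the self--similar profiles $V_{\epsilon,\lambda}(x,t)=\alpha\varphi_{\epsilon}(x)(\lambda+t)^{-m/(m-1)}$, which solve the same degenerate equation as $v_{\epsilon}$ and which reduce at $t=0$ to a multiple of $\varphi_{\epsilon}$.

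First I would run the difference quotient on $\partial\Omega$. Fix $x\in\partial\Omega$ and $t\in(0,T]$. Since $\varphi_{\epsilon}=0$ on $\partial\Omega$ and we extend $v_{\epsilon},\varphi_{\epsilon}$ by zero outside $\Omega$, we have $v_{\epsilon}(x,t)=V_{\epsilon,\lambda_1}(x,t)=V_{\epsilon,\lambda_2}(x,t)=0$, so for a coordinate direction $e$,
\begin{equation*}
\Delta^{\e}_e v_{\epsilon}(x,t)=\frac{v_{\epsilon}(x+\e e,t)}{\e},\qquad
\frac{\alpha\,\Delta^{\e}_e\varphi_{\epsilon}(x)}{(\lambda_1+t)^{m/(m-1)}}
\le \Delta^{\e}_e v_{\epsilon}(x,t)
\le \frac{\alpha\,\Delta^{\e}_e\varphi_{\epsilon}(x)}{(\lambda_2+t)^{m/(m-1)}}.
\end{equation*}
For $t\in[0,T]$ and $0<\lambda_2\le\lambda_1<\infty$ the factors $\alpha(\lambda_i+t)^{-m/(m-1)}$ lie between two positive constants depending only on $m,T,\lambda_1,\lambda_2$; combined with the uniform two--sided bound on $\|\Delta^{\e}_e\varphi_{\epsilon}(x)\|$ this gives $c<\|\Delta^{\e}_e v_{\epsilon}(x,t)\|<C$, equivalently the asserted bound on the difference quotients, the lower bound coming from the inward coordinate directions exactly as in the nondegeneracy lemma of Section~\ref{sec-eigen}.

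The substantive content, already carried out above the statement, is the barrier $V_{\epsilon,\lambda_1}\le v_{\epsilon}\le V_{\epsilon,\lambda_2}$ itself. I would justify it by: (i) the comparison principle for the degenerate flux equation $v^{1-1/m}\La v-\partial_t v=0$ up to the free boundary $\{v_{\epsilon}=0\}$, where the coefficient $v^{1-1/m}$ vanishes --- this is precisely the degeneracy whose control runs through the paper; (ii) the ordering of the initial data $V_{\epsilon,\lambda_1}(\cdot,0)\le g_{\epsilon}^m\le V_{\epsilon,\lambda_2}(\cdot,0)$, which uses $0<\delta_0<|\nabla g|<C$ on $\partial\Omega$ together with the comparability of $\varphi_{\epsilon}$ with $\mathrm{dist}(\cdot,\partial\Omega)$ away from the holes and, near each hole $T_{a^{\ast}_{\e}}$, the almost--harmonicity of $\varphi_{\epsilon}$ (Lemma~\ref{lem-2} and the lemma following it), yielding $c\varphi_{\epsilon}\le\xi\le C\varphi_{\epsilon}$ and letting one absorb the constants by taking $\lambda_1$ large and $\lambda_2$ small; (iii) the matching lateral data, all zero. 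I expect step~(ii) near the holes --- the interplay between the $\e$--periodic cut--off $\xi$ and the spike behaviour of $\varphi_{\epsilon}$ --- to be the most delicate point; once the squeeze is in place, the passage to the discrete gradient bound above is purely mechanical.
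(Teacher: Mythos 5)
Your proposal follows essentially the same route as the paper: squeeze $v_{\epsilon}$ between the self-similar profiles $V_{\epsilon,\lambda_1}\le v_{\epsilon}\le V_{\epsilon,\lambda_2}$ and transfer the two-sided discrete nondegeneracy of $\varphi_{\epsilon}$ (Section~\ref{sec-eigen}, with $p=\tfrac1m$) to the flux via the uniformly bounded time factors. In fact the paper gives no more than this sketch, so your filled-in justification of the squeeze (comparison principle, ordering of initial data via $c\varphi_{\epsilon}\le\xi\le C\varphi_{\epsilon}$) is a faithful elaboration rather than a different argument.
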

\subsection{Almost Flatness}
For small $\delta_0>0$, we consider the set
\begin{equation*}
T_{\tilde{a}^{\ast}_{\epsilon}}=\{v_{\epsilon}<\delta_0\}.
\end{equation*}
As we mentioned above, $v_{\epsilon}$ satisfies
\begin{equation*}
V_{\epsilon,\lambda_1} \leq v_{\epsilon} \leq V_{\epsilon,\lambda_2}
\end{equation*}
for some $0<\lambda_2\leq\lambda_1<\infty$. Hence the function $v_{\epsilon}$ is trapped in between $V_{\epsilon, \lambda_1}$ amd $V_{\epsilon, \lambda_2}$ near the $T_{a_{\epsilon}}$. Thus, there exists a uniform constant $c>1$ such that
\begin{equation*}
T_{a^{\ast}_{\epsilon}} \subset T_{\tilde{a}^{\ast}_{\epsilon}}
\subset T_{ca^{\ast}_{\epsilon}}.
\end{equation*}
Therefore, the hole
$T_{\tilde{a}^{\ast}_{\epsilon}}$ is not much different from
$T_{a^{\ast}_{\epsilon}}$. Since
$v_{\e}$ satisfies
\begin{equation}\label{eq-4.2.1}
0<c<v_{\epsilon}<C<\infty, \quad \mbox{in $\Omega_{\tilde{a}^{\ast}_{\e}}\times(0,T]$},
\end{equation}
by uniformly ellipticity of $v_{\epsilon}$, (\ref{eq-4.2.1}) has the harnack type
inequality. Following the same argument in Section
\ref{sec-2-1}(Heat Operator), we have the following Lemma.
\begin{lemm}
Set  $a_{\e}=(\frac{\epsilon a^*_{\e}}{2})^{1/2}$. Then
$$\osc_{\{B_{\e}(m)\backslash B_{a_{\e}}(m)\}\times[t_0-a^2_{\epsilon},t_0]}v_{\e}=o(\e^{\gamma})$$
for $m\in \e\Z\cap\supp\vp$ and for some $0<\gamma\leq 1$.
\end{lemm}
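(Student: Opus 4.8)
The plan is to reproduce the two--step argument of Section~\ref{sec-2-1} --- first Lemma~\ref{lem-osc-p}, then Lemma~\ref{lem-osc} --- replacing the caloric comparison functions of the heat operator by the corresponding objects for the uniformly parabolic operator that governs $v_\e$. The point of departure is the remark already made above: by \eqref{eq-4.2.1} together with $T_{a^{\ast}_{\e}}\subset T_{\tilde a^{\ast}_{\e}}\subset T_{ca^{\ast}_{\e}}$, on $\Omega_{\tilde a^{\ast}_{\e}}\times(0,T]$ the flux $v_\e$ solves the uniformly parabolic equation $v_\e^{1-\frac1m}\La v_\e-\partial_t v_\e=0$ with ellipticity constants $c^{1-\frac1m}$ and $C^{1-\frac1m}$; hence on every parabolic cylinder sitting inside $\Omega_{\tilde a^{\ast}_{\e}}\times(0,T]$ it obeys the Harnack inequality and the oscillation lemma of \cite{LB}, and the uniform discrete gradient bound $|\Delta^{\e}_e v_\e|\le C$ and time regularity follow as in Lemmas~\ref{lem-d-c11} and \ref{lem-1}, using the trapping $V_{\e,\lambda_1}\le v_\e\le V_{\e,\lambda_2}$ and the nondegeneracy of $\vp_\e$ to control the behaviour near $T_{\tilde a^{\ast}_{\e}}$.

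First I would establish the analogue of Lemma~\ref{lem-osc-p}, namely $\osc_{\partial B_{a_\e}(m)\times[t_0-a_\e^2,t_0]}v_\e=o(\e^{\gamma})$. Rescaling $\widehat v_\e(x,t)=v_\e(a_\e x+m,a_\e^2 t+t_0)$ (note $a_\e^2=\e a^{\ast}_{\e}/2$) produces a bounded solution of the same uniformly parabolic equation on $\{B_{\e/2a_\e}(0)\bs B_{\tilde a^{\ast}_{\e}/a_\e}(0)\}\times[-1,0]$, where $\tilde a^{\ast}_{\e}/a_\e\le c\,a^{\ast}_{\e}/a_\e=O(\e^{1/(n-2)})$. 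Letting $\widehat w_\e$ be the replacement solving the equation --- with the diffusion coefficient frozen to a constant across the rescaled hole --- in the full ball $B_{\e/2a_\e}(0)\times[-1,0]$ with the same outer data, and iterating the oscillation lemma of \cite{LB} over the $\log_4(\e/a_\e)\approx\frac{1}{n-2}\log_4(1/\e)$ dyadic parabolic scales, one gets $\osc_{B_1\times[-1,0]}\widehat w_\e=O(\e^{\gamma_1})$ with $\gamma_1\approx\frac{\log_4\delta_0^{-1}}{n-2}$, $\delta_0=1-1/C_1$ and $C_1$ the Harnack constant, exactly as in the heat case. The error $\widehat v_\e-\widehat w_\e$ is a uniformly parabolic solution vanishing on the outer parabolic boundary and bounded by $2\sup v_\e$ on $\partial B_{\tilde a^{\ast}_{\e}/a_\e}(0)$; a radial Pucci/Krylov--Safonov barrier $\sim|x|^{-\mu}$ with $\mu=\mu(n,c/C)>0$ --- the uniformly elliptic substitute for the Newtonian potential used in Lemma~\ref{lem-osc-p} --- controls it by $O((a^{\ast}_{\e}/a_\e)^{\mu})=O(\e^{\mu/(n-2)})$ on $\partial B_1$. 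Undoing the scaling gives the claim for any $0<\gamma<\min\{\gamma_1,\mu/(n-2),1\}$.

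Next I would propagate this from $\partial B_{a_\e}(m)$ to the annulus, as in the proof of Lemma~\ref{lem-osc}. Set $\widetilde v_\e(x,t):=\sup_{\partial B_{a_\e}(m)\times\{t\}}v_\e$ for $x$ in the cell $Q^{\e}_m$; by the discrete gradient and time--regularity bounds this cell profile is Lipschitz up to an $\e$--error (the analogue of Corollary~\ref{corollary-2-10-}), and the first step gives $|v_\e-\widetilde v_\e|=o(\e^{\gamma})$ on $\cup_m\partial B_{a_\e}(m)$. Since $a_\e\gg a^{\ast}_{\e}$, on $\{B_\e(m)\bs B_{a_\e}(m)\}\times[t_0-Ca_\e^2,t_0]$ --- enlarging the time window slightly so that interior parabolic estimates absorb the bottom slice --- $v_\e$ is uniformly parabolic, its values on $\partial B_{a_\e}(m)$ lie within $o(\e^{\gamma})$ of the constant $\widetilde v_\e(\cdot,t_0)$, and, by the first step applied in the neighbouring cells together with $|\Delta^{\e}_e v_\e|\le C$, its values on $\partial B_\e(m)$ lie within $C\e+o(\e^{\gamma})$ of the same constant. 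The maximum principle then traps $v_\e$ between that constant $\pm(C\e+o(\e^{\gamma}))$ throughout the annular region, so $\osc_{\{B_\e(m)\bs B_{a_\e}(m)\}\times[t_0-a_\e^2,t_0]}v_\e=o(\e^{\gamma})$.

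The delicate step is the first one: the explicit caloric replacement and Newtonian barrier of the heat operator are unavailable, so the whole construction must be carried out for the quasilinear operator $v_\e^{1-\frac1m}\La-\partial_t$. This is legitimate precisely because, off $T_{\tilde a^{\ast}_{\e}}$, the diffusion coefficient $v_\e^{1-\frac1m}$ is pinned between the fixed positive constants $c^{1-\frac1m}$ and $C^{1-\frac1m}$, so the Krylov--Safonov Harnack inequality, the oscillation lemma \cite{LB}, and Pucci--extremal radial barriers are all at our disposal with $\e$--independent constants, while the only genuinely $\e$--dependent ingredient --- the capacity $\sim(a^{\ast}_{\e})^{n-2}$ of the hole $T_{a^{\ast}_{\e}}$ --- enters solely through the explicit factor $(a^{\ast}_{\e}/a_\e)^{\mu}=O(\e^{\mu/(n-2)})$ and is therefore harmless.
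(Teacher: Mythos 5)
Your proposal is correct and follows essentially the same route the paper intends: the paper gives no written proof of this lemma, merely noting that $v_{\e}$ is uniformly parabolic on $\Omega_{\tilde a^{\ast}_{\e}}\times(0,T]$ by \eqref{eq-4.2.1} and instructing the reader to ``follow the same argument'' as Lemmas \ref{lem-osc-p} and \ref{lem-osc} of Section \ref{sec-2-1}, which is exactly the two-step adaptation you carry out (Krylov--Safonov Harnack and oscillation lemma in place of the caloric estimates, a Pucci-type radial barrier in place of the Newtonian potential, and the trapping $V_{\e,\lambda_1}\le v_{\e}\le V_{\e,\lambda_2}$ to handle the holes). Your version is in fact more detailed than the paper's; the only point to state carefully is that the difference $\widehat v_{\e}-\widehat w_{\e}$ is not itself a solution of the quasilinear equation but is controlled by the Pucci extremal operators (or by the linearized uniformly parabolic equation), which is precisely what your barrier argument requires.
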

\subsection{Discrete Gradient Estimate}
$v_{\epsilon}$ can be approximated by the solutions, $v_{\epsilon,
\delta}$, of the following penalized equations, \cite{Fr}, for
sufficiently large number $M>0$,
\begin{equation}\label{eq-4-3-1}
\begin{aligned}
\La v_{\epsilon, \delta}-v^{\frac{1}{m}-1}_{\epsilon,
\delta}(v_{\epsilon, \delta})_t&+\beta_{\delta}(-v_{\epsilon,
\delta}+\delta+M\xi(x) )=0\quad \textrm{in}\,\, Q_T\\
&v_{\epsilon, \delta}=\delta \qquad \qquad  \qquad \qquad
\textrm{on} \,\,
\partial_l Q_T
\end{aligned}
\end{equation}
where $\beta_{\delta}(s)$ satisfies
\begin{equation*}
\begin{aligned}
\beta_{\delta}'(s) \geq 0,\quad &\beta_{\delta}''(s) \leq 0, \quad
\beta_{\delta}(0)=-1,\\
\beta_{\delta}(s)&=0 \quad \textrm{for}\,\, s>\delta,\\
\lim_{\delta \to 0}\beta_{\delta}(s) &\to -\infty \quad \textrm{for}
\,\, s<0.
\end{aligned}
\end{equation*}
Using this, we obtain the following results.
\begin{lemm}\label{lem-negative-of-v-epsilon-forever-00999}
If $(v_{\epsilon})_t$ is non-positive at $t=0$, then
$(v_{\e,\delta})_t\leq 0$ for all $t \in (0,T]$.
\end{lemm}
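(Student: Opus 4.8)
The plan is to differentiate the penalized equation \eqref{eq-4-3-1} in time and show, via a maximum principle, that $w:=(v_{\e,\delta})_t$ cannot take a positive value. For fixed $\e,\delta>0$ the penalty forces $v_{\e,\delta}\geq\delta$ (since $\beta_\delta(s)\to-\infty$ as $s\to 0^-$), while a standard comparison gives $v_{\e,\delta}\leq C$; on the range $[\delta,C]$ the coefficient $v^{\frac1m-1}$ is smooth and bounded above and below by positive constants, so \eqref{eq-4-3-1} is a nondegenerate, uniformly parabolic quasilinear equation with smooth nonlinearities. Hence $v_{\e,\delta}$ is a classical solution, $w$ is well defined and continuous on $\overline{Q_T}$, and $(v_{\e,\delta})_t|_{t=0}$ is meaningful.

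Writing $v:=v_{\e,\delta}$ and using that $\xi=\xi(x)$ is independent of $t$, differentiation of \eqref{eq-4-3-1} in $t$ yields
\begin{equation*}
\La w-v^{\frac1m-1}w_t-\left(\tfrac1m-1\right)v^{\frac1m-2}w^2-\beta_\delta'(-v+\delta+M\xi)\,w=0 ,
\end{equation*}
which I regard as a linear parabolic equation $\La w-v^{\frac1m-1}w_t+c(x,t)\,w=0$ with zeroth-order coefficient
\begin{equation*}
c(x,t)=\left(1-\tfrac1m\right)v^{\frac1m-2}w-\beta_\delta'(-v+\delta+M\xi).
\end{equation*}
Three observations: (i) $C^{\frac1m-1}\leq v^{\frac1m-1}\leq\delta^{\frac1m-1}$, so the operator is uniformly parabolic; (ii) since $m>1$ and $v\geq\delta$, the first part of $c$ is bounded above by $(1-\tfrac1m)\delta^{\frac1m-2}\|w\|_{L^\infty(Q_T)}<\infty$; (iii) $\beta_\delta'\geq0$, so $-\beta_\delta'\leq0$. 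Thus $c$ is bounded from above; that it may be positive at some points does not matter.

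On $\partial_lQ_T$ the datum $v_{\e,\delta}\equiv\delta$ does not depend on $t$, so $w=0$ there; at $t=0$ the hypothesis $(v_\e)_t\leq0$ — together with $\beta_\delta\leq0$, which carries the inequality over to the penalized problem — gives $w=(v_{\e,\delta})_t\leq0$ on $\Omega\times\{0\}$. Hence $w\leq0$ on the parabolic boundary $\partial_pQ_T$. Set $\widetilde w=e^{-\lambda t}w$ with $\lambda$ so large that $c-\lambda v^{\frac1m-1}\leq-1$ on $\overline{Q_T}$ (possible by (i) and the upper bound on $c$); then $\widetilde w$ satisfies $\La\widetilde w-v^{\frac1m-1}\widetilde w_t+(c-\lambda v^{\frac1m-1})\widetilde w=0$ with strictly negative zeroth-order coefficient, and still $\widetilde w\leq0$ on $\partial_pQ_T$. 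If $\max_{\overline{Q_T}}\widetilde w>0$, it is attained at a point $(x_0,t_0)$ with $x_0\in\Omega$, $t_0\in(0,T]$; there $\La\widetilde w\leq0$ and $\widetilde w_t\geq0$, so the left side is $\leq0$, while $-(c-\lambda v^{\frac1m-1})\widetilde w(x_0,t_0)\geq\widetilde w(x_0,t_0)>0$ — a contradiction. Therefore $\widetilde w\leq0$, i.e. $(v_{\e,\delta})_t\leq0$ in $Q_T$.

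The delicate point is the zeroth-order coefficient $c$: one has to verify that the penalty derivative $\beta_\delta'$ (which is large for small $\delta$) enters with the favorable sign, and that the quadratic term $(\tfrac1m-1)v^{\frac1m-2}w^2$ produces a zeroth-order factor that, thanks to $m>1$ and $v\geq\delta$, is bounded above — which is exactly what legitimizes the renormalization $\widetilde w=e^{-\lambda t}w$ and prevents a plain maximum-principle evaluation at the maximum point from failing. The remaining ingredients — classical regularity of $v_{\e,\delta}$ up to $t=0$, the comparison bound $v_{\e,\delta}\leq C$, and carrying the sign hypothesis from $v_\e$ to the family $v_{\e,\delta}$ — are routine.
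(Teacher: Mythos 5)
Your proof is correct and follows essentially the same route as the paper: differentiate the penalized equation \eqref{eq-4-3-1} in time and run a maximum-principle argument on $w=(v_{\e,\delta})_t$, using $\beta_{\delta}'\ge 0$ and the lower bound $v_{\e,\delta}\ge\delta$ to control the sign-indefinite zeroth-order terms. The only cosmetic difference is that you use the exponential renormalization $e^{-\lambda t}w$ together with the parabolic boundary data $w\le 0$, whereas the paper tracks the spatial maximum $f_{\delta}(s)=\max_{x}(v_{\e,\delta})_t(x,s)$ via a Gronwall inequality and treats the non-strict case $(v_{\e,\delta})_t(\cdot,0)\le 0$ by approximating with strictly negative initial data.
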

\begin{proof}
First, we assume
\begin{equation*}
(v_{\epsilon, \delta})_t(\cdot,0) <0.
\end{equation*}
 Since $v_{\epsilon,
\delta}$ is positive, we have
\begin{equation*}
\La (v_{\epsilon, \delta})_t-\left(\frac{1}{m}-1\right)v_{\epsilon,
\delta}^{\frac{1}{m}-2}(v_{\epsilon, \delta})_t^2-v_{\epsilon,
\delta}^{\frac{1}{m}-1}((v_{\epsilon, \delta})_t)_t-(v_{\epsilon,
\delta})_t\beta'_{\delta}(\cdot)=0.
\end{equation*}
Hence
\begin{equation*}
((v_{\epsilon, \delta})_t)_t=v_{\epsilon, \delta}^{1-\frac{1}{m}}\La
(v_{\epsilon,
\delta})_t+\left(1-\frac{1}{m}\right)\frac{(v_{\epsilon,
\delta})_t^2}{v_{\epsilon, \delta}}-v_{\epsilon,
\delta}^{1-\frac{1}{m}}(v_{\epsilon,
\delta})_t\beta'_{\delta}(\cdot).
\end{equation*}
Let $f_{\delta}(s)$ be a function having the maximum value of
$(v_{\epsilon, \delta})_t$ at $t=s$, then there exist points
$x(s)=(x_1(s),\cdots,x_n(s)) \in \mathbb{R}^n$ such that
\begin{equation*}
f_{\delta}(s)=(v_{\epsilon, \delta})_t(x(s),s).
\end{equation*}
Since $x(s)$ are maximum points, we have
\begin{equation*}
(f_{\delta})_s=((v_{\epsilon, \delta})_t)_s=((v_{\epsilon,
\delta})_t)_t+\nabla (v_{\epsilon, \delta})_t\cdot
x'(s)=((v_{\epsilon, \delta})_t)_t.
\end{equation*}
Hence
\begin{equation*}
(f_{\delta})_s\leq
\left(1-\frac{1}{m}\right)\frac{f_{\delta}^2}{v_{\epsilon,
\delta}}-v_{\epsilon,
\delta}^{1-\frac{1}{m}}f_{\delta}\beta'_{\delta}\leq C_{\e,\delta}
f_{\delta}
\end{equation*}
, which implies $$f_{\delta}(s)\leq f_{\delta}(0)e^{C_{\e,\delta}
s}<0.$$ When $(v_{\epsilon, \delta})_t(\cdot,0) \leq 0$, we can
approximate $(v_{\epsilon, \delta})(\cdot,0)$ by a smooth initial
data, $(v_{\epsilon, \delta,k})(\cdot,0)$ such that $(v_{\epsilon,
\delta,k})_t(\cdot,0)<0$. By the argument above, we know that
$(v_{\epsilon, \delta,k})_t(\cdot,s)<0$ and then $(v_{\epsilon,
\delta,k})(\cdot,s_1)>(v_{\epsilon, \delta,k})(\cdot,s_2)$ for
$s_1>s_2$. Since the operator is uniformly elliptic on each compact
subset $D$ of $\Omega_{\e}$, we have uniform convergence of
$v_{\epsilon, \delta,k}$ to $v_{\epsilon, \delta}$. Hence $(v_{\epsilon, \delta})(\cdot,s_1)\geq (v_{\epsilon,
\delta})(\cdot,s_2)$ for $s_1>s_2$ and lemma follows.
\end{proof}
\begin{lemm}\label{lem-grad-e}
If $\frac{d v_{\epsilon}}{dt}\big|_{t=0}$ is non-positive, then
\begin{equation*}
|\nabla v_{\epsilon,\delta}|_{L^{\infty}}\leq C_{\e}
\end{equation*}
with $C_{\e}$ satisfying $\lim_{\e \to 0}C_{\e}=\infty$.
\end{lemm}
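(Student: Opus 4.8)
The plan is a Bernstein-type gradient estimate for the smooth penalized approximations $v_{\e,\delta}$ of \eqref{eq-4-3-1}, carried out with $\e$ held fixed but uniformly in $\delta$. Write $v=v_{\e,\delta}$, $a=v^{\frac1m-1}>0$ and $\zeta=-v+\delta+M\xi$, so that the equation reads $\La v-a\,v_t+\beta_\delta(\zeta)=0$. Since $\frac{d v_{\e}}{dt}\big|_{t=0}\le 0$, Lemma \ref{lem-negative-of-v-epsilon-forever-00999} gives $v_t\le 0$ throughout $Q_T$, and this sign is the mechanism that makes the estimate close. First I would differentiate the equation in $x_k$, multiply by $2v_{x_k}$ and sum over $k$, obtaining for $w=|\nabla v|^2$
\begin{equation*}
a\,w_t=\La w-2|D^2v|^2-2\Big(\tfrac1m-1\Big)v^{\frac1m-2}\,w\,v_t-2\beta_\delta'(\zeta)\,w+2M\beta_\delta'(\zeta)\,\nabla\xi\cdot\nabla v .
\end{equation*}
On the right, $-2|D^2v|^2\le0$; $-2(\tfrac1m-1)v^{\frac1m-2}w\,v_t\le0$ because $m>1$ forces $\tfrac1m-1<0$ while $v_t\le0$; and $-2\beta_\delta'(\zeta)w\le0$ since $\beta_\delta'\ge0$.

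The next step is the maximum principle for $w$ over $\overline{Q_T}$. If $w$ attains its maximum at an interior point $(x_0,t_0)$ with $t_0>0$, then $\nabla w=0$, $\La w\le0$, $w_t\ge0$ there, and since $a>0$ the identity above forces $0\le -2\beta_\delta'(\zeta)w+2M\beta_\delta'(\zeta)\,\nabla\xi\cdot\nabla v$; when $\beta_\delta'(\zeta(x_0,t_0))>0$ this yields $w(x_0,t_0)\le M^2\|\nabla\xi\|_{L^\infty}^2$. When $\beta_\delta'(\zeta(x_0,t_0))=0$ — a region where no penalization is felt, so $v$ solves the flux equation $\La v=v^{\frac1m-1}v_t$ and, by the self-similar barriers $V_{\e,\lambda_1}\le v\le V_{\e,\lambda_2}$ together with the almost-flatness of $\varphi_\e$, $v$ is comparable there to a smooth strictly positive profile — the operator is locally uniformly parabolic, so interior $C^{1,\alpha}$ estimates bound $w$ in terms of $\sup|v|$ and the values of $w$ on the parabolic boundary of that region, which reduces matters to the previous case and to $\partial_pQ_T$. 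Finally, on $\Omega\times\{0\}$ one has $v(\cdot,0)=(g\xi)^m$ up to mollification, hence $w\le C\|\nabla\xi\|_{L^\infty}^2$ there; and on $\partial_lQ_T$, $v$ is nondegenerate and bounded below by a positive constant near $\partial\Omega$ by Lemma \ref{lem-4} and the barrier construction of Section \ref{sec-eigen}, so the standard boundary gradient bound for the then-uniformly-parabolic equation is available, independently of $\delta$ and even of $\e$.

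Collecting the cases gives $\|\nabla v_{\e,\delta}\|_{L^\infty(Q_T)}\le C_\e$ with $C_\e$ of the size of $\|\nabla\xi\|_{L^\infty}$. Since $\xi$ passes from $0$ on $T_{a^{\ast}_{\e}}$ to $1$ on $\mathbb{R}^n_{\e^{(n-1)/(n-2)}}$ as a capacitary (harmonic) profile on the thin annulus between the inner radius $a^{\ast}_{\e}=\e^{n/(n-2)}$ and the outer radius $\e^{(n-1)/(n-2)}$, a direct computation gives $\|\nabla\xi\|_{L^\infty}\sim (a^{\ast}_{\e})^{-1}\to\infty$, which is precisely the asserted $\lim_{\e\to0}C_\e=\infty$.

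I expect the genuine difficulties to be twofold. The singular coefficient $v^{\frac1m-2}$ where $v$ is small would wreck a naive Bernstein estimate; it is harmless here only because the time-monotonicity of Lemma \ref{lem-negative-of-v-epsilon-forever-00999} hands the offending term the favorable sign. And the uniformity in $\delta$ is delicate precisely in the boundary layer along $\partial\Omega$, where $v_{\e,\delta}$ flattens down to the level $\delta$ and uniform parabolicity of the equation degenerates, so one must extract the gradient bound there from the nondegeneracy barriers of Section \ref{sec-eigen} rather than from the equation itself.
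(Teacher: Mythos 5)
Your proposal is correct and follows essentially the same route as the paper: differentiate the penalized equation \eqref{eq-4-3-1} in space, apply the maximum principle to the squared gradient using the sign $(v_{\e,\delta})_t\le 0$ from Lemma \ref{lem-negative-of-v-epsilon-forever-00999} to control the singular term, bound an interior maximum by $M\|\nabla\xi\|_{L^\infty}$ through the $\beta_\delta'$ term, and handle the parabolic boundary with barriers (the paper uses a stationary obstacle-problem super-solution $f$ plus Hopf's principle where you invoke the nondegeneracy barriers of Sections \ref{sec-eigen}--\ref{sec-por}). The only additions beyond the paper's argument are your explicit treatment of the case $\beta_\delta'=0$ at the maximum and the computation $C_\e\sim\|\nabla\xi\|_{L^\infty}\sim (a^*_\e)^{-1}$, both consistent with the statement.
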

\begin{proof}
For $i \in \{1,\cdots,n\}$, we will have
\begin{equation*}
\begin{aligned}
\La (v_{\epsilon,
\delta})_{x_i}&-\left(\frac{1}{m}-1\right)v_{\epsilon,
\delta}^{\frac{1}{m}-2}(v_{\epsilon, \delta})_{x_i}(v_{\epsilon,
\delta})_t\\
&-v_{\epsilon, \delta}^{\frac{1}{m}-1}\big((v_{\epsilon,
\delta})_{x_i}\big)_t-\beta'(\cdot)\big((v_{\epsilon,
\delta})_{x_i}-M\xi_{x_i}\big)=0.
\end{aligned}
\end{equation*}
Hence
\begin{equation*}
\begin{aligned}
\La \big(|(v_{\epsilon,
\delta})_{x_i}|^2\big)&-\left(\frac{1}{m}-1\right)v_{\epsilon,
\delta}^{\frac{1}{m}-2}|v_{\epsilon, \delta}|^2_{x_i}(v_{\epsilon,
\delta})_t\\
&-v_{\epsilon, \delta}^{\frac{1}{m}-1}\big((v_{\epsilon,
\delta})_{x_i}^2\big)_t-2\beta'(\cdot)\big(|(v_{\epsilon,
\delta})_{x_i}|^2-(v_{\epsilon, \delta})_{x_i}M\xi_{x_i}\big)\geq 0.
\end{aligned}
\end{equation*}
Let $X_i=\sup_{(x,t)\in Q_T}|(u_{\e ,\delta})_{x_i}|^2$
and assume that the maximum $X_i$ is achieved at $(x_0,t_0)$. Then we
have , at $(x_0,t_0)$,
\begin{equation*}
\La(u_{\e ,\delta})_{x_i}^2\leq 0 \qquad \mbox{and} \qquad ((u_{\e ,\delta})_{x_i}^2)_t\geq 0.
\end{equation*}
By the Lemma \ref{lem-negative-of-v-epsilon-forever-00999}, we get
\begin{equation*}
(v_{\epsilon,\delta})_t\leq 0.
\end{equation*}
Thus, if $X_i=|v_{\epsilon, \delta}|_{x_i}^2>|\xi_{x_i}|^2$ at an interior
point $(x_0,t_0)$, we can get a contradiction. Therefore $X_i\leq |\xi_{x_i}|^2$ in the interior of $Q_T$. To get a bound of the maximum
$X_i$ on the lateral boundary $\partial_l Q_T$ or at the
initial time, we consider the least super-solution $f$ of the obstacle
problem
\begin{equation*}
\begin{cases}
\begin{aligned}
\La f &\leq 0 \qquad \qquad \qquad \quad \mbox{in $\Omega$}\\
f(x) &\geq g(x) \qquad \qquad \quad \mbox{in $\Omega$} \\
f(x)& = \delta \qquad \qquad \qquad \quad \mbox{on $\partial
\Omega$}.
\end{aligned}
\end{cases}
\end{equation*}
Then $f$ is a stationary super-solution with $f>g$ in $\Omega$ and $f=v_{\epsilon, \delta}$ on $\partial \Omega$. Hence, by the maximum principle and Hopf principle, we get
\begin{equation*}
X_i \leq
C\big(\|\xi\|_{C^1(Q_T)}+\|g\|_{C^1(Q_T)}+\|f\|_{C^1(Q_T)})
\end{equation*}
and the lemma follows.
\end{proof}
\begin{lemm}\label{lem-d-g}
For each unit direction $e$, we define the difference quotient of
$v_{\epsilon}$ at $x$ in the direction $e$ by
\begin{equation*}
\Delta_e^{\epsilon}v_{\epsilon,\delta}=\frac{v_{\epsilon,\delta}(x+\epsilon
e,t)-v_{\epsilon,\delta}(x,t)}{\epsilon}.
\end{equation*}
If $\frac{d v_{\epsilon}}{dt}\big|_{t=0}$ is non-positive, then
\begin{equation*}
|\Delta_e^{\epsilon}v_{\epsilon,\delta}|\leq C
\end{equation*}
uniformly in $Q_T$.
\end{lemm}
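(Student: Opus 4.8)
The plan is to follow the proof of Lemma~\ref{lem-grad-e} closely, but with the lattice difference quotient $\Delta_e^{\epsilon}$ in place of the derivative $\partial_{x_i}$, where $e\in\Z^n$ is a unit direction. The whole point is that $\xi$ is $\e$-periodic, so $\Delta_e^{\epsilon}\xi\equiv 0$ by \eqref{eq-aligned-epsilon-periodic-in-PME-000}; annihilating this forcing term is exactly what removes the inhomogeneity $M\xi_{x_i}$ that, in Lemma~\ref{lem-grad-e}, forced the bound to blow up like $C_\e$. What is left is a clean subsolution estimate whose constant is dictated only by the parabolic boundary data.

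First I would apply $\Delta_e^{\epsilon}$ to the penalized equation \eqref{eq-4-3-1}. Since $\La$ and $\partial_t$ commute with the shift $x\mapsto x+\e e$, one gets $\Delta_e^{\epsilon}\La v_{\epsilon,\delta}=\La w$ and $\Delta_e^{\epsilon}(v_{\epsilon,\delta})_t=w_t$ with $w:=\Delta_e^{\epsilon}v_{\epsilon,\delta}$. A discrete product rule gives
\begin{equation*}
\Delta_e^{\epsilon}\Big(v_{\epsilon,\delta}^{\frac1m-1}(v_{\epsilon,\delta})_t\Big)=a^{\epsilon}w_t+b^{\epsilon}w,\qquad a^{\epsilon}=v_{\epsilon,\delta}(x+\e e,t)^{\frac1m-1}>0,
\end{equation*}
where $b^{\epsilon}=\big(\tfrac1m-1\big)\,\bar v^{\,\frac1m-2}(v_{\epsilon,\delta})_t(x,t)$ and $\bar v$ is an intermediate value of $v_{\epsilon,\delta}$ between $x$ and $x+\e e$; both are finite because $v_{\epsilon,\delta}>0$. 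Differencing the penalty term yields $\beta_\delta'(\cdot)\big(-w+M\Delta_e^{\epsilon}\xi\big)=-\beta_\delta'(\cdot)\,w$, the forcing $M\Delta_e^{\epsilon}\xi$ disappearing. Hence $\La w-a^{\epsilon}w_t-\big(b^{\epsilon}+\beta_\delta'(\cdot)\big)w=0$, and multiplying by $2w$ exactly as in Lemma~\ref{lem-grad-e},
\begin{equation*}
\La(|w|^2)-a^{\epsilon}(|w|^2)_t=2|\nabla w|^2+2\big(b^{\epsilon}+\beta_\delta'(\cdot)\big)|w|^2 .
\end{equation*}

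Next I would invoke the hypothesis: since $\frac{d v_{\epsilon}}{dt}\big|_{t=0}$ is non-positive, Lemma~\ref{lem-negative-of-v-epsilon-forever-00999} gives $(v_{\epsilon,\delta})_t\le 0$ on all of $Q_T$, so $b^{\epsilon}\ge 0$ (as $\tfrac1m-1<0$ and $\bar v^{\frac1m-2}>0$), while trivially $\beta_\delta'\ge 0$ and $a^{\epsilon}>0$. Therefore the right-hand side above is non-negative, i.e.\ $|w|^2$ is a subsolution of the parabolic operator $\La-a^{\epsilon}\partial_t$ on all of $Q_T$, with no inhomogeneous term. By the weak maximum principle, $\sup_{Q_T}|w|^2$ is attained on the parabolic boundary $\partial_p Q_T$. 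On $\Omega\times\{0\}$ the initial data is (a $\delta$-regularization of) $g_\e^m=(g\xi)^m$, and $\e$-periodicity gives $\Delta_e^{\epsilon}(g\xi)^m=\xi^m\,\Delta_e^{\epsilon}(g^m)$, bounded by $\|g^m\|_{C^1}$ uniformly in $\epsilon,\delta$ since $0\le\xi\le 1$. On $\partial_l Q_T$, where $g\equiv 0$ so the data carries no $\epsilon$-dependence, the difference quotient is bounded exactly as in the boundary part of Lemma~\ref{lem-grad-e} --- by comparison with the stationary super-solution $f$ and the Hopf principle, equivalently by the discrete nondegeneracy of $v_{\epsilon}$ near $\partial\Omega$ (cf.\ Lemma~\ref{lem-4}). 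Combining the two boundary bounds gives $|w|\le C$ with $C$ independent of $\epsilon$ and $\delta$.

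The main obstacle is the estimate on $\partial_l Q_T$: one must control $\Delta_e^{\epsilon}v_{\epsilon,\delta}$ uniformly in $\epsilon$ all the way to $\partial\Omega$, which is precisely the region where the perforations accumulate, and this is exactly what the barrier construction behind Lemma~\ref{lem-4} (and the super-solution $f$) is designed for. A secondary bookkeeping point is that $a^{\epsilon}$ and $b^{\epsilon}$ depend on $\delta$ (the factor $\bar v^{\frac1m-2}$ degenerates as a hole is approached and $\delta\to 0$); but only the positivity of $a^{\epsilon}$ and the sign $b^{\epsilon}\ge 0$ enter the maximum principle, and the final constant comes from the $\delta$-independent boundary data, so uniformity is preserved.
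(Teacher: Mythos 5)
Your proposal is correct and follows essentially the same route as the paper: difference the penalized equation (the $\e$-periodicity of $\xi$ killing the forcing term), use $(v_{\epsilon,\delta})_t\le 0$ from Lemma \ref{lem-negative-of-v-epsilon-forever-00999} together with the fact that $s\mapsto s^{\frac{1}{m}-1}$ is decreasing to give the cross term the right sign, conclude that $|\Delta_e^{\epsilon}v_{\epsilon,\delta}|^2$ cannot attain an interior maximum, and close the estimate on the lateral boundary via Lemma \ref{lem-4}. Your mean-value-theorem phrasing of the cross term is just a rewording of the paper's ``opposite signs'' observation, and your explicit treatment of the initial data is a harmless (indeed welcome) addition the paper leaves implicit.
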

\begin{proof}
Since $M\xi(x)$ is $\epsilon$-periodic, we will have
\begin{equation*}
\La \big(\Delta_e^{\epsilon}(v_{\epsilon,
\delta})\big)-\Delta_e^{\epsilon}\big(v_{\epsilon,
\delta}^{\frac{1}{m}-1}\big)(v_{\epsilon, \delta})_t-v_{\epsilon,
\delta}^{\frac{1}{m}-1}\big(\Delta_e^{\epsilon}(v_{\epsilon,
\delta})\big)_t-\Delta_e^{\epsilon}(v_{\epsilon,
\delta})\beta'(\cdot)=0.
\end{equation*}
Hence
\begin{equation*}
\begin{aligned}
\La \big(|\Delta_e^{\epsilon}(v_{\epsilon,
\delta})|^2\big)&-2\Delta_e^{\epsilon}(v_{\epsilon,
\delta})\Delta_e^{\epsilon}\big(v_{\epsilon,
\delta}^{\frac{1}{m}-1}\big)(v_{\epsilon, \delta})_t\\
&-v_{\epsilon,
\delta}^{\frac{1}{m}-1}\big(|\Delta_e^{\epsilon}(v_{\epsilon,
\delta})|^2\big)_t-2|\Delta_e^{\epsilon}(v_{\epsilon,
\delta})|^2\beta'(\cdot)\geq 0.
\end{aligned}
\end{equation*}
Since $\Delta_e^{\epsilon}(v_{\epsilon, \delta})$ and
$\Delta_e^{\epsilon}\big(v_{\epsilon, \delta}^{\frac{1}{m}-1}\big)$
have different sign, we can get a contradiction if
$|\Delta_e^{\epsilon}(v_{\epsilon, \delta})|^2$ has a maximum value
in the interior. Hence, 
\begin{equation*}
|\Delta_e^{\epsilon}v_{\epsilon}|^2<C \qquad int(Q_T)
\end{equation*}
for some constant $C>0$. On the lateral boundary, the estimate is obtained from the
Lemma \ref{lem-4}. Thus we get
$|\Delta_e^{\epsilon}v_{\epsilon}|<C$ in $Q_T$.
\end{proof}

\begin{corollary}
If $\frac{d v_{\epsilon}}{dt}\big|_{t=0}$ is non-positive, then we
have
$$(v_{\e})_t\leq 0$$
and
\begin{equation*}
|\Delta_e^{\epsilon}v_{\epsilon}|\leq C
\end{equation*}
uniformly in $Q_T$.
\end{corollary}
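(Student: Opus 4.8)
\emph{Proof strategy.} The plan is to establish both conclusions first for the penalized approximants $v_{\epsilon,\delta}$ solving (\ref{eq-4-3-1}) and then to let $\delta\to 0$. I would begin by recalling that, by the theory in \cite{Fr}, $v_{\epsilon,\delta}$ converges to the solution $v_{\epsilon}$ of (\ref{eq-main2}) as $\delta\to 0$, and that the initial datum of $v_{\epsilon,\delta}$ may be chosen so that the sign condition $(v_{\epsilon,\delta})_t|_{t=0}\le 0$ is inherited from the hypothesis $\frac{d v_{\epsilon}}{dt}\big|_{t=0}\le 0$; if the initial time-derivative is only nonpositive rather than strictly negative, one first approximates the initial datum by smooth data with strictly negative time derivative, exactly as in the proof of Lemma \ref{lem-negative-of-v-epsilon-forever-00999}. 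This reduces everything to uniform (in $\delta$) estimates for $v_{\epsilon,\delta}$.

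Next I would invoke Lemma \ref{lem-negative-of-v-epsilon-forever-00999}, which applies verbatim and gives $(v_{\epsilon,\delta})_t\le 0$ throughout $Q_T$, i.e.\ $t\mapsto v_{\epsilon,\delta}(x,t)$ is nonincreasing for each fixed $x$. With this sign information available, Lemma \ref{lem-d-g} then yields $|\Delta_e^{\epsilon}v_{\epsilon,\delta}|\le C$ uniformly in $Q_T$ for every unit direction $e\in\Z^n$, with $C$ independent of $\delta$: the time-monotonicity is precisely what forces a contradiction at an interior maximum of $|\Delta_e^{\epsilon}v_{\epsilon,\delta}|^2$ in that lemma, while the bound on the lateral boundary is supplied by the discrete nondegeneracy of Lemma \ref{lem-4} together with the super-solution/Hopf comparison used there.

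Finally I would pass to the limit $\delta\to 0$. The time-monotonicity $v_{\epsilon,\delta}(x,t_1)\le v_{\epsilon,\delta}(x,t_2)$ for $t_1\ge t_2$ is preserved under pointwise convergence, so $v_{\epsilon}(x,t_1)\le v_{\epsilon}(x,t_2)$ and hence $(v_{\epsilon})_t\le 0$ in the distributional (equivalently viscosity) sense. The discrete-gradient estimate involves only difference quotients, not genuine derivatives, so the uniform bound survives the limit at once, giving $|\Delta_e^{\epsilon}v_{\epsilon}|\le C$ in $Q_T$. The step I expect to be the main obstacle is justifying the convergence $v_{\epsilon,\delta}\to v_{\epsilon}$ with $\delta$-uniform constants, since (\ref{eq-main2}) degenerates where $v_{\epsilon}\to 0$, i.e.\ near the holes $T_{a^{\ast}_{\epsilon}}$; this is resolved by the a priori trapping $V_{\epsilon,\lambda_1}\le v_{\epsilon}\le V_{\epsilon,\lambda_2}$ established above, which keeps $v_{\epsilon}$ bounded below on $\Omega_{\tilde a^{\ast}_{\epsilon}}$, renders the operator uniformly elliptic on compact subsets there, and thus makes the penalization scheme of \cite{Fr} converge locally uniformly with the control near $\partial\Omega$ that the estimates require.
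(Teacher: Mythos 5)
Your proposal is correct and follows essentially the same route as the paper: apply Lemma \ref{lem-negative-of-v-epsilon-forever-00999} and Lemma \ref{lem-d-g} to the penalized solutions $v_{\epsilon,\delta}$ and pass to the limit $\delta\to 0$, the monotonicity surviving pointwise convergence and the difference-quotient bound surviving because it involves no derivatives. The only cosmetic difference is in how the convergence $v_{\epsilon,\delta}\to v_{\epsilon}$ is justified — the paper extracts it from the ($\delta$-uniform, $\epsilon$-dependent) gradient bound of Lemma \ref{lem-grad-e} via compactness, whereas you appeal to the trapping between $V_{\epsilon,\lambda_1}$ and $V_{\epsilon,\lambda_2}$ and local uniform ellipticity; both are adequate for the stated conclusion.
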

\begin{proof}
By Lemma \ref{lem-grad-e}, for each $\e>0$, $v_{\epsilon, \delta}$
converges uniformly to $v_{\epsilon}$ up to subsequence. Then
$v_{\epsilon, \delta}(x,t_1)\geq v_{\epsilon, \delta}(x,t_2)$ for
$t_1<t_2$ implies $v_{\epsilon}(x,t_1)\geq v_{\epsilon}(x,t_2)$ and
then $(v_{\epsilon})_t(x,t)\leq 0$. By the Lemma \ref{lem-d-g}, we have
\begin{equation*}
\left|\frac{v_{\epsilon,\delta}(x+\epsilon
e,t)-v_{\epsilon,\delta}(x,t)}{\epsilon}\right|<C.
\end{equation*}
Therefore, by taking $\delta\ra 0$, $|\Delta_e^{\epsilon}v_{\epsilon}|\leq
 C$.
\end{proof}
\subsection{Correctibility Condition II}
Likewise elliptic eigenvalue problem, we need an appropriate
corrector. Similar to the correctibility condition I, we
start with the following form
\begin{equation*}
\overline{w}_{\e}=d-dw_{\epsilon}
\end{equation*}
where $w_{\epsilon}$ is given by \eqref{eq-cases-definition-of-corrector-cite-by-CL-5} and for some constant $d>0$.
\begin{lemm}\label{lemma-correctibility-2---}
Let $\bar{k}_{c,d,\epsilon}$ be such that
\begin{equation*}
\begin{aligned}
\big((1-w_{\epsilon})^p&-1\big)d^pc+(d-dw_{\epsilon})^p\La
(d-dw_{\epsilon})\\
&=\big((1-w_{\epsilon})^p-1\big)d^pc-d^{1+p}(1-w_{\epsilon})^p\La
w_{\epsilon}=\bar{k}_{c,d,\epsilon}
\end{aligned}
\end{equation*}
for some $c,d>0$. Then, we have
\begin{equation*}
-d^{1+p}\kappa_{_{B_{r_0}}}=\overline{k}_{c,d}
\end{equation*}
where $\bar{k}_{c,d}=\lim_{\epsilon\to 0}\bar{k}_{c,d,\epsilon}$ and $\kappa_{_{B_{r_0}}}$ is the harmonic capacity of $B_{r_0}$.
\end{lemm}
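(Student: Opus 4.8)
The plan is to follow the proof of the Correctibility Condition~I, Lemma~\ref{lemma-correctibility-condition-1-0000000000}, carrying along the extra lower order term $\big((1-w_{\epsilon})^p-1\big)d^pc$. Fix $m\in\epsilon\Z^n\cap\supp\vp$ and blow up one cell by setting $v_{\epsilon}(x)=w_{\epsilon}(a^{\ast}_{\epsilon}x+m)$, so that the hole becomes $B_1$, the cell becomes $Q^{\epsilon/a^{\ast}_{\epsilon}}_{0}$, and, writing $D_{\epsilon}=Q^{\epsilon/a^{\ast}_{\epsilon}}_{0}\backslash B_1$, one has $\La v_{\epsilon}=(a^{\ast}_{\epsilon})^2k$ in $D_{\epsilon}$ (since $\La w_{\epsilon}=k$ on $\R^n_{a^{\ast}_{\epsilon}}$), with $v_{\epsilon}=1$ on $\partial B_1$ and $v_{\epsilon}=|\nu\cdot\D v_{\epsilon}|=0$ on $\partial Q^{\epsilon/a^{\ast}_{\epsilon}}_{0}$. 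Multiplying the identity defining $\bar k_{c,d,\epsilon}$ by $(a^{\ast}_{\epsilon})^2$ and integrating over $D_{\epsilon}$ yields
\begin{equation*}
(a^{\ast}_{\epsilon})^2\int_{D_{\epsilon}}\bar k_{c,d,\epsilon}\,dx=(a^{\ast}_{\epsilon})^2d^pc\int_{D_{\epsilon}}\big((1-v_{\epsilon})^p-1\big)\,dx-d^{1+p}\int_{D_{\epsilon}}(1-v_{\epsilon})^p\,\La v_{\epsilon}\,dx.
\end{equation*}

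Then I would let $\epsilon\to0$. By the elliptic uniform estimates of \cite{GT}, $v_{\epsilon}\to v$ in $C^2$ on every bounded set, $v$ being the capacitary potential of $B_1$ ($\La v=0$ in $\R^n\backslash B_1$, $v=1$ on $\partial B_1$, $v\to0$ at infinity); since the source $(a^{\ast}_{\epsilon})^2k\to0$, the maximum principle gives $0\le v_{\epsilon}\le1$ and $v_{\epsilon}(x)\sim|x|^{2-n}$ away from $B_1$, so that $\int_{D_{\epsilon}}\big(1-(1-v_{\epsilon})^p\big)\,dx=O\big((\epsilon/a^{\ast}_{\epsilon})^2\big)$. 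Consequently the first term on the right is $O\big((a^{\ast}_{\epsilon})^2(\epsilon/a^{\ast}_{\epsilon})^2\big)=O(\epsilon^2)\to0$; for the second, $\La v_{\epsilon}=(a^{\ast}_{\epsilon})^2k$ gives $-d^{1+p}\int_{D_{\epsilon}}(1-v_{\epsilon})^p\La v_{\epsilon}=-d^{1+p}k(a^{\ast}_{\epsilon})^2\int_{D_{\epsilon}}(1-v_{\epsilon})^p$, and since $\int_{D_{\epsilon}}(1-v_{\epsilon})^p=|Q^{\epsilon/a^{\ast}_{\epsilon}}_{0}|\,(1+o(1))$ this tends to $-d^{1+p}k\,r_0^{2-n}$ by the scaling computation (\ref{eq-3-4-1}) in Lemma~\ref{lemma-correctibility-condition-1-0000000000} (which gives $(a^{\ast}_{\epsilon})^2(\epsilon/a^{\ast}_{\epsilon})^n\to r_0^{2-n}$). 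The left side likewise tends to $\bar k_{c,d}\,r_0^{2-n}$, with $\bar k_{c,d}=\lim_{\epsilon\to0}\bar k_{c,d,\epsilon}$ understood in the cell-averaged sense as in Lemma~\ref{lemma-correctibility-condition-1-0000000000}. Comparing the three limits gives $\bar k_{c,d}=-d^{1+p}k$, and since in the critical regime the admissible constant is $k=\kappa_{_{B_{r_0}}}$ (Lemma~\ref{lem-5}, equivalently the flux $-\int_{\partial B_1}\partial_{\nu}v$ of the potential rescaled back exactly as in Lemma~\ref{lemma-correctibility-condition-1-0000000000}), we obtain $-d^{1+p}\kappa_{_{B_{r_0}}}=\bar k_{c,d}$.

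The main difficulty is the thin layer near $\partial B_1$, where $1-v_{\epsilon}$ is small and, for $0<p<1$, the factor $(1-v_{\epsilon})^{p-1}$ and $\D\big((1-v_{\epsilon})^p\big)$ degenerate. The reason the argument still closes is that the Laplacian enters only through the bulk relation $\La v_{\epsilon}=(a^{\ast}_{\epsilon})^2k$, a constant — not through an integration by parts against the weight $(1-v_{\epsilon})^p$ — so no distributional mass is produced on $\partial B_1$ and the capacity is felt solely through the forced value $k=\kappa_{_{B_{r_0}}}$; the only analytic input needed in the layer is the uniform bound $\int_{D_{\epsilon}}\big(1-(1-v_{\epsilon})^p\big)\,dx=O\big((\epsilon/a^{\ast}_{\epsilon})^2\big)$, which rests on $C^2_{loc}$ convergence to $v$ and the decay $v(x)\sim|x|^{2-n}$. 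A secondary point, already implicit in Lemma~\ref{lemma-correctibility-condition-1-0000000000}, is to give a precise meaning to the single limit $\bar k_{c,d}$ of $\bar k_{c,d,\epsilon}$, which as a function equals $-d^pc$ near the holes and $-d^{1+p}k$ away from them; the cell average is the correct reading, and it is exactly what the displayed identity computes.
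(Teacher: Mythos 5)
Your argument is correct and follows essentially the paper's own route: rescale the cell by $a^{\ast}_{\epsilon}$, integrate the defining identity over $Q_0^{\epsilon/a^{\ast}_{\epsilon}}\backslash B_1$, discard the term $\big((1-v_{\epsilon})^p-1\big)d^pc$ in the limit, and identify the limit of the remaining term with $-d^{1+p}\kappa_{_{B_{r_0}}}$ after multiplying by $r_0^{n-2}$. The only (harmless) variation is in how the capacity is extracted from $\int_{D_{\epsilon}}(1-v_{\epsilon})^p\La v_{\epsilon}\,dx$: you use the pointwise normalization $\La w_{\epsilon}=k=\kappa_{_{B_{r_0}}}$ together with a volume count of $(1-v_{\epsilon})^p$, whereas the paper (through Lemma \ref{lemma-correctibility-condition-1-0000000000}) reads it off as the boundary flux $-\int_{\partial B_1}\nabla v\cdot\nu\,d\sigma$ of the limiting capacitary potential after using $(1-w_{\epsilon})\rightharpoonup 1$ to drop the weight --- two computations made equivalent by the divergence theorem, as you note yourself.
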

\begin{proof}
Set
$v_{\epsilon}(x)=w_{\epsilon}(a^{\ast}_{\epsilon}x+m)$, then
$v_{\epsilon}$ satisfies
\begin{equation*}
\begin{cases}
\begin{aligned}
\big((1-w_{\epsilon})^p-1\big)d^pc&-d^{1+p}(1-v_{\epsilon})^p\La
v_{\epsilon}=\bar{k}_{c,d,\epsilon}(a^{\ast}_{\epsilon})^2 \qquad
\mbox{in
$Q^{^{\frac{\epsilon}{a^{\ast}_{\epsilon}}}}_0\backslash B_1$}\\
v_{\epsilon}&=1 \qquad \qquad \qquad \qquad \qquad \qquad \qquad
\quad \mbox{on $\partial
B_1$}\\
v_{\epsilon}=&|\nu\cdot\nabla v_{\epsilon}|=0 \qquad \qquad \qquad \qquad
\qquad \quad \mbox{on $\partial
Q^{^{\frac{\epsilon}{a^{\ast}_{\epsilon}}}}_0$}.
\end{aligned}
\end{cases}
\end{equation*}
Thus we get
\begin{equation*}
\begin{aligned}
(a^{\ast}_{\epsilon})^2d^pc\int_{_{Q^{^{\frac{\epsilon}{a^{\ast}_{\epsilon}}}}_0\backslash B_1}}&\big((1-v_{\epsilon})^p-1\big)dx=\\
&d^{1+p}\int_{_{Q^{^{\frac{\epsilon}{a^{\ast}_{\epsilon}}}}_0\backslash
B_1}}(1-v_{\epsilon})^p\La
v_{\epsilon}dx+(a^{\ast}_{\epsilon})^2\int_{_{Q^{^{\frac{\epsilon}{a^{\ast}_{\epsilon}}}}_0\backslash
B_1}}\bar{k}_{c,d,\epsilon}dx.
\end{aligned}
\end{equation*}
Similar to the correctibility condition I, Lemma \ref{lemma-correctibility-condition-1-0000000000}, letting $\epsilon\to 0$, we get
\begin{equation}\label{eq-4-4-1}
-d^{1+p}\kappa_{_{B_1}}=\lim_{\e \to
0}\big[\bar{k}_{c,d,\e}(a^{\ast}_{\e})^2(\frac{\e}{a^{\ast}_{\e}})^{n}\big]=\frac{1}{r^{n-2}_0}\overline{k}_{c,d}
\end{equation}
where $\kappa_{_{B_{1}}}$ is the harmonic capacity of $B_{1}$ and $\bar{k}_{c,d}=\lim_{\epsilon\to 0}\bar{k}_{c,d,\epsilon}$ since
$\widehat{w}_{\e}=(1-w_{\e})\rightharpoonup 1$ in
$L^2(\mathbb{R}^n)$. If we multiply equation (\ref{eq-4-4-1}) by
$r^{n-2}_0$, we obtain
\begin{equation*}
-d^{1+p}\kappa_{_{B_{r_0}}}=\overline{k}_{c,d}
\end{equation*}
where $\kappa_{_{B_{r_0}}}$ is the harmonic capacity of $B_{r_0}$.
\end{proof}
\subsection{Homogenized Equation}
Finally, we show the homogenized equation satisfied by the
limit $u$ of $u_{\epsilon}$ through viscosity methods.
\begin{theo}\label{thm-ii}
Let $a^{\ast}_{\epsilon}=\epsilon^{{\alpha}_{\ast}}$ for
${\alpha}_{\ast}=\frac{n}{n-2}$ for $n \geq 3$ and
$a^{\ast}_{\epsilon}=e^{-\frac{1}{\epsilon^2}}$ for $n=2$. Then for
$c_0a_{\epsilon}^{\ast} \leq a_{\epsilon} \leq
C_0a_{\epsilon}^{\ast}$, $v$ is a viscosity solution of
\begin{equation*}
\begin{cases}
\begin{aligned}
v^{1-\frac{1}{m}}(\La v&-\kappa_{_{B_{r_0}}}v_+)-v_t=0 \qquad \qquad
\mbox{in
$Q_T$}\\
v&=0 \qquad \qquad \qquad\qquad \qquad \mbox{on $\partial_l Q_T$}\\
v&=g^m \qquad \qquad \qquad \qquad \quad \mbox{in $\Omega \times
\{t=0\}$}
\end{aligned}
\end{cases}
\end{equation*}
where $\kappa_{_{B_{r_0}}}$ is the capacity of $B_{r_0}$ if
$r_0=\lim_{\epsilon \to
0}\frac{{\alpha}_{\epsilon}}{{\alpha}^{\ast}_{\epsilon}}$ exists.
\end{theo}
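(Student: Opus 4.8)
The plan is to prove the two viscosity inequalities separately, exactly as for the heat operator in Lemma~\ref{lem-6} and for the eigenvalue problem in Theorem~\ref{thm-eigen}, but now feeding the corrector through Correctibility Condition~II (Lemma~\ref{lemma-correctibility-2---}) and controlling the degeneracy of the flux equation by the trap $V_{\epsilon,\lambda_1}\leq v_\epsilon\leq V_{\epsilon,\lambda_2}$.

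\textbf{Supersolution.} Since $v_\epsilon$ solves $v_\epsilon^{1-\frac1m}\La v_\epsilon-\partial_t v_\epsilon=0$ exactly in $Q_{T,a^*_\epsilon}$ and $v_\epsilon\geq 0$, one has $v_\epsilon^{1-\frac1m}\big(\La v_\epsilon-\kappa_{B_{r_0}}(v_\epsilon)_+\big)-\partial_t v_\epsilon=-\kappa_{B_{r_0}}v_\epsilon^{1-\frac1m}(v_\epsilon)_+\leq 0$, so every $v_\epsilon$ is a viscosity supersolution of the homogenized equation. The almost--flatness lemmas of this section, the discrete gradient estimate (Lemma~\ref{lem-d-g}) and the time monotonicity $\partial_t v_\epsilon\leq 0$ give a modulus of continuity for $v_\epsilon$ that is uniform in $\epsilon$ off a set $D_\delta$ of small measure, while the trap by the self--similar solutions pins down $v_\epsilon$ near the holes; hence $v_\epsilon\to v$ uniformly on $\Omega\bs D_\delta$ and the supersolution property passes to the limit. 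The lateral and initial data also pass to the limit since $g_\epsilon^m=g^m\xi^m\to g^m$ off the holes and $v_\epsilon=0$ on $\partial_lQ_{T,a^*_\epsilon}$.

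\textbf{Subsolution, the main step.} Assume not: a parabola $P$ touches $v$ from above at an interior point $(x^0,t^0)$, where $v(x^0,t^0)>0$ by the lower trap, with $P^{1-\frac1m}\big(\La P-\kappa_{B_{r_0}}P_+\big)-\partial_t P<-2\delta_0<0$. On a small parabolic cylinder $B_\eta(x^0)\times[t^0-\eta^2,t^0]$ I would produce a second parabola $Q$ with $D^2Q>D^2P$ there, $Q(x^0,t^0)<P(x^0,t^0)-\delta$, $Q>P$ on the parabolic boundary, and, for a reference value $Q_1:=\min Q$ over the cylinder, the strict inequality $Q^{1-\frac1m}\big(\La Q-\kappa_{B_{r_0}}Q_1\big)-\partial_t Q<-\delta_0$ together with $|Q-Q_1|<C\eta$ (possible by shrinking $\eta$ and using continuity of all coefficients). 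Set $Q_\epsilon(x,t)=Q(x,t)-Q_1\,w_\epsilon(x)$; expanding $Q_\epsilon^{1-\frac1m}\La Q_\epsilon-\partial_t Q_\epsilon$ and invoking Lemma~\ref{lemma-correctibility-2---} to replace the corrector term $-Q_1^{1+\frac1m}(1-w_\epsilon)^{1-\frac1m}\La w_\epsilon$ and the attendant lower--order pieces by $-\kappa_{B_{r_0}}Q_1^{1+\frac1m}$ up to an error $o(1)+O(\eta^{\sigma})$, one obtains, for $\epsilon,\eta$ small, $Q_\epsilon^{1-\frac1m}\La Q_\epsilon-\partial_t Q_\epsilon<-\delta_0/2<0$ in $\{B_\eta(x^0)\times[t^0-\eta^2,t^0]\}\cap\Omega_{a^*_\epsilon}$, i.e. $Q_\epsilon$ is a strict supersolution of the flux equation there. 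On the holes $w_\epsilon=1$, so $Q_\epsilon\geq Q-Q_1\geq 0=v_\epsilon$; on $\partial B_\rho(x^0)\times[t^0-\rho^2,t^0]$ and at $t=t^0-\rho^2$ for a slightly smaller $\rho$, the uniform convergence $v_\epsilon\to v$ and $Q>P\geq v$ give $Q_\epsilon\geq v_\epsilon$. By the comparison principle for the porous medium equation on the perforated cylinder --- valid because $v_\epsilon$ is trapped between positive self--similar solutions away from $T_{a^*_\epsilon}$, so the equation is locally uniformly parabolic there, and $v_\epsilon=0$ on $\partial T_{a^*_\epsilon}$ --- we get $v_\epsilon\leq Q_\epsilon$ in the smaller cylinder, in particular $v_\epsilon(x^0,t^0)\leq Q_\epsilon(x^0,t^0)$. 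Letting $\epsilon\to 0$, $w_\epsilon\to 0$ in the critical regime, so $Q_\epsilon(x^0,t^0)\to Q(x^0,t^0)<P(x^0,t^0)=v(x^0,t^0)$, which contradicts $v_\epsilon(x^0,t^0)\to v(x^0,t^0)$. Thus $v$ is a viscosity subsolution, and since $v\geq 0$ forces $v_+=v$, the two parts together give the theorem.

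\textbf{The main obstacle.} The delicate point is the coupling of the corrector with the degeneracy of the flux near $T_{a^*_\epsilon}$: one needs the fundamental--solution--type profile of $\varphi_\epsilon$ (hence of the $V_{\epsilon,\lambda_i}$, hence of $v_\epsilon$) near the holes in order to keep $Q_\epsilon$ admissible up to $\partial T_{a^*_\epsilon}$, to justify the comparison principle on a cylinder where the equation degenerates only at the removed boundary, and to carry the nonlinear factor $(1-w_\epsilon)^{1-\frac1m}$ through the capacity computation of Lemma~\ref{lemma-correctibility-2---} with an error that still vanishes as $\epsilon,\eta\to 0$ --- this is precisely why Correctibility Condition~II, rather than the linear corrector of \cite{CL}, is required.
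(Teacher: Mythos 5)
Your architecture is exactly the paper's: the supersolution half is the same one--line observation, and the subsolution half is the same contradiction argument with a touching parabola $P$, an auxiliary parabola $Q$, a corrected test function built from $w_\e$, Correctibility Condition~II (Lemma~\ref{lemma-correctibility-2---}) to produce the capacity term, and the comparison principle on the perforated cylinder. The one substantive deviation is that you take $Q_\e=Q-Q_1w_\e$, whereas the paper takes $Q_\e=Q-Q_1w_\e+\e_0+h(x,t)$ with a small shift $\e_0$ and an auxiliary function $h$ solving a nondegenerate initial value problem with data $h(\cdot,t_0-\eta^2)=Q_1w_\e$ and decaying to below $\delta/4$ by time $t_0$. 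That device is not decorative: it does two things your writeup does not. First, since $s\mapsto s^{1-\frac1m}$ is not additive, the paper splits $Q_\e^{1-\frac1m}\La Q_\e$ by sub-additivity into a bracket $[1]=Q_1^{1-\frac1m}(1-w_\e)^{1-\frac1m}\La Q-Q_1^{2-\frac1m}(1-w_\e)^{1-\frac1m}\La w_\e-Q_t$, which is exactly the combination Lemma~\ref{lemma-correctibility-2---} converts into $Q_1^{1-\frac1m}(\La Q-\kappa_{B_{r_0}}Q_1)-Q_t$, plus a residual bracket $[2]$ containing $c\,(Q-Q_1+\e_0+h)^{1-\frac1m}\La Q$ with a sign indicator $c\in\{0,1\}$ depending on the sign of $\La Q$; the function $h$ is constructed precisely so that $[2]$ is removed. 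Your claim that "the attendant lower--order pieces" are $o(1)+O(\eta^{\sigma})$ is not a pointwise statement: the piece $\big((1-w_\e)^{1-\frac1m}-1\big)Q_1^{1-\frac1m}\La Q$ is of order one near the holes and must be fed into the correctibility lemma together with the $\La w_\e$ term, not discarded as an error. (Also, the coefficient of the capacity term should be $Q_1^{2-\frac1m}=d^{1+p}$ with $p=1-\frac1m$, not $Q_1^{1+\frac1m}$.)

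Second, and more importantly, $h$ handles the bottom face of the cylinder: with the paper's choice, $Q_\e=Q+\e_0>v_\e$ at $t=t_0-\eta^2$ with no further argument, whereas with your $Q_\e=Q-Q_1w_\e$ the inequality $Q_\e\geq v_\e$ on the bottom face near a hole reduces to $Q_1(1-w_\e)+(Q-Q_1)\geq v_\e$, and near $\partial T_{a^*_\e}$ the only available upper bound is $v_\e\leq V_{\e,\lambda_2}=\alpha\vp_\e/(\lambda_2+t)^{\frac{m}{m-1}}$. Both $1-w_\e$ and $\vp_\e$ vanish on $\partial T_{a^*_\e}$ with fundamental--solution profiles, so they are comparable, but the resulting inequality $Q_1(1-w_\e)\gtrsim C\vp_\e$ holds only if $Q_1=\min Q$ is bounded below by a constant determined by $\lambda_2$ and $\sup\vp$ --- and $Q_1\approx v(x^0,t^0)$ may be arbitrarily small. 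You flag the near--hole admissibility as "the main obstacle" but do not resolve it; the paper's $\e_0+h$ construction is exactly the resolution, and omitting it leaves a genuine gap at the initial face of the comparison region.
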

\begin{proof}
For $\epsilon>0$,
\begin{equation*}
v_{\epsilon}^{1-\frac{1}{m}}(\La v_{\epsilon}-\kappa_{_{B_{r_0}}}
v_{\epsilon})-(v_{\epsilon})_t=-v_{\epsilon}^{1-\frac{1}{m}}\kappa_{_{B_{r_0}}}
v_{\epsilon}\leq 0.
\end{equation*}
Thus, the limit $v$ of $v_{\epsilon}$ also satisfies
\begin{equation*}
v^{1-\frac{1}{m}}(\La v-\kappa_{_{B_{r_0}}} v_+)-v_t\leq 0
\end{equation*}
in a viscosity sense. So we are going to show that $v$ is a
sub-solution. Let us assume that there is a parabola $P$ touching $v$
from above at $x_0$ and
\begin{equation*}
P^{1-\frac{1}{m}}(\La P-\kappa_{_{B_{r_0}}} P)-P_t\leq -2\delta_0<0.
\end{equation*}
In a small neighborhood of $x_0$, $B_{\eta}(x_0)\times
[t_0-\eta^{2},t_0]$, we can choose another parabola $Q$ such that
\begin{equation*}
\begin{cases}
D^2Q > D^2P \qquad \qquad \qquad \qquad \mbox{in $B_{\eta}(x_0)\times[t_0-\eta^{2},t_0]$} \\
Q_t<P_t \qquad \qquad \qquad \qquad \qquad \mbox{in $B_{\eta}(x_0)\times[t_0-\eta^{2},t_0]$}\\
Q(x_0,t_0)<P(x_0,t_0)-\delta \\
Q(x,t)>P(x,t)\qquad \qquad \qquad \textrm{on}\,\,
\begin{array}{c}\{\partial B_{\eta}(x_0)\times
[t_0-\eta^{2},t_0]\}\\ \cap
\{B_{\eta}(x_0)\times\{t_0-\eta^{2}\}\}\end{array}.
\end{cases}
\end{equation*}
and 
\begin{equation*}
Q_1^{1-\frac{1}{m}}(\La Q-\kappa Q_1)-Q_t\leq -\delta_0<0
\end{equation*}
for $Q_1=Q(x_1,t_1)=\min_{B_{\eta}(x_0)\times
[t_0-\eta^{2},t_0]}Q(x,t)$. Let us consider
\begin{equation*}
Q_{\epsilon}(x,t)=Q(x,t)- Q_1w_{\epsilon}(x)+\epsilon_0+h(x,t)
\end{equation*}
for a small number $0<{\epsilon}_0<\frac{\delta}{4}$ and a function
$h(x,t)$ we choose later. In $\{B_{\eta}(x_0)\cap
\Omega_{a_{\epsilon}}\}\times[t_0-{\eta}^{2},t_0]$, $Q_{\epsilon}$ satisfies
\begin{equation*}
\begin{aligned}
Q^{1-\frac{1}{m}}_{\epsilon}&\La
Q_{\epsilon}-(Q_{\epsilon})_t\leq\\
&\bigg[Q_1^{1-\frac{1}{m}}(1-w_{\epsilon})^{1-\frac{1}{m}}\La
Q-Q_1^{2-\frac{1}{m}}(1-w_{\epsilon})^{1-\frac{1}{m}}\La
w_{\epsilon}-Q_t\bigg]\\
&+\bigg[c(Q-Q_1+\epsilon_0+h)^{1-\frac{1}{m}}\La
Q+(Q-Q_1w_{\epsilon}+\epsilon_0+h)^{1-\frac{1}{m}}\La
h\\
&\qquad -h_t\bigg]:=[1]+[2]
\end{aligned}
\end{equation*}
with $c=0$ if $\La Q< 0$ and $c=1$ if $\La Q\geq 0$. To
remove the $[2]$, we consider the following initial value problem
\begin{equation*}
\begin{cases}
\begin{aligned}
a^{ij}(x,t)D_{ij}\tilde{h}&-\tilde{h}_t=f(x,t) \qquad \qquad
\mbox{in $\mathbb{R}^n\times
(0,\infty)$}\\
\tilde{h}&\geq 0 \qquad \qquad \qquad \qquad  \mbox{in
$\mathbb{R}^n\times
(0,\infty)$}\\
\tilde{h}(x,0)&=Q_1w_{\epsilon}(x)
\end{aligned}
\end{cases}
\end{equation*}
with
\begin{equation*}
\begin{aligned}
a^{ij}(x,t)&=\begin{cases}\qquad 0 \qquad  \qquad
\qquad \qquad \qquad \qquad \quad \mbox{if $i\neq j$}\\
\big[(Q-Q_1)\zeta(x,t)+\epsilon_0+\tilde{h}\big]^{1-\frac{1}{m}} \qquad \mbox{otherwise}\end{cases},\\
f(x,t)&=-c\big[(Q-Q_1)\zeta(x,t)+\epsilon_0+\tilde{h}\big]^{1-\frac{1}{m}}\La
Q,\\
\zeta(x,t)\in C^{\infty},&\quad 0\leq \zeta(x,t) \leq 1, \quad
\zeta(x,t)=1 \quad \mbox{in
$B_{\eta}(x_0)\times[0,\eta^2]$} \qquad \\
 \mbox{and}&\quad
\zeta(x,t)=0 \quad \mbox{in
$\{B_{\eta+\eta^2}\times[0,(\eta+\eta^2)^2]\}^c$}.
\end{aligned}
\end{equation*}
Since the equation has non-degenerate coefficients, we can find the
solution $\tilde{h}(x,t)$ of the initial value problem. We can also
observe the fact that the solution $\tilde{h}(x,t)$ decays rapidly
in a small time because $w_{\epsilon} \rightharpoonup 0$ as
$\epsilon \to 0$ in $H_0^1(\mathbb{R}^n)$. Hence, for sufficiently small $\epsilon>0$, we get
\begin{equation*}
0 \approx \tilde{h}(x,t)<\frac{\delta}{4} \qquad\mbox{at $\quad
t=\eta^2$}.
\end{equation*}
Therefore, $Q_{\epsilon}$ satisfies
\begin{equation*}
\begin{aligned}
Q^{1-\frac{1}{m}}_{\epsilon}\La Q_{\epsilon}-(Q_{\epsilon})_t \leq &
Q_1^{1-\frac{1}{m}}\La Q+
Q_1^{1+\frac{1}{m}}\big[(1-w_{\epsilon}^{1-\frac{1}{m}}-1)\big]\La
Q\\
&-Q_1^{2-\frac{1}{m}}(1-w_{\epsilon})^{1-\frac{1}{m}}\La
w_{\epsilon}-Q_t
\end{aligned}
\end{equation*}
in $\{B_{\eta}(x_0)\cap
\Omega_{a_{\epsilon}}\}\times[t_0-{\eta}^{2},t_0]$. By correctibility condition II, Lemma \ref{lemma-correctibility-2---},
\begin{equation*}
\begin{aligned}
Q^{1-\frac{1}{m}}_{\epsilon}\La Q_{\epsilon}-(Q_{\epsilon})_t\leq & Q_1^{1-\frac{1}{m}}\La Q+\bar{k}_{_{\La Q,Q_1,\epsilon}}-Q_t\\
&\leq Q_1^{1-\frac{1}{m}}\La Q+\bar{k}_{_{\La Q,Q_1}}+\frac{\delta_0}{2}-Q_t\\
&\leq Q_1^{1-\frac{1}{m}}\big(\La Q-\kappa_{_{B_{r_0}}}
Q_1\big)+\frac{\delta_0}{2}-Q_t\leq -\frac{\delta_0}{2}<0
\end{aligned}
\end{equation*}
for small $\epsilon>0$. Hence $ Q^{1-\frac{1}{m}}_{\epsilon}\La
Q_{\epsilon}-(Q_{\epsilon})_t<0$ and $Q_{\epsilon}\geq u_{\epsilon}$
on $\partial \{B_{\rho}(x_0)\cap \Omega_{a_{\epsilon}}\}\times
[t_0-\rho^{2},t_0]$ and $\{B_{\rho}(x_0,t_0)\cap
\Omega_{a_{\epsilon}}\}\times\{t_0-\rho^{2}\}$ for some $\rho>0$. By a comparison principle, $Q_{\epsilon}(x_0,t_0)\geq
u_{\epsilon}(x_0,t_0)$ and then $Q(x_0,t_0)+\frac{\delta}{2} \geq
u(x_0,t_0)$. On the other hand,
$Q(x_0,t_0)<P(x_0,t_0)-\delta<u(x_0,t_0)-\delta_0$, which is a
contradiction.
\end{proof}

{\bf Acknowledgement}
Ki-Ahm Lee is supported by the Korea Research Foundation Grant
funded by the Korean Government (MOEHRD) (KRF-2005-041-C00040)

\end{document}